\newtheorem{theorem}{Theorem}[]
\newtheorem*{theorem*}{Theorem}
\newtheorem{corollary}[theorem]{Corollary}
\newtheorem{lemma}[theorem]{Lemma}
\newtheorem{proposition}[theorem]{Proposition}
\newtheorem*{claim*}{Claim}
\theoremstyle{definition}
\newtheorem{definition}[theorem]{Definition}
\newtheorem*{definition*}{Definition}
\theoremstyle{AppDefinition}
\theoremstyle{AppClaim}
\theoremstyle{remark}
\newtheorem{remark}[theorem]{Remark}
\newtheorem{example}[theorem]{Example}
\newtheorem*{example*}{Example}
\def\beginmat{ \left( \begin{array} }
\def\endmat{ \end{array} \right) }
\def\log{{\rm log}}
\newcommand*{\op}{%
  \DOTSB
  \mathop{\vphantom{\bigoplus}\mathpalette\matt@op\relax}%
  \slimits@
}
\newcommand\matt@op[2]{%
  \vcenter{\m@th\hbox{\resizebox{\widthof{$#1\bigoplus$}}{!}{$\boxplus$}}}%
}
\newcommand{\one}{{\bf{1}}}
\def\R{{\mathbb R}}
\newcommand{\Trop}{\mathrm{Trop}}
\newcommand{\Sym}{\mathrm{Sym}}
\def\@biblabel#1{}
\@citea\NAT@hyper@{%
     \NAT@nmfmt{\NAT@nm}%
     \hyper@natlinkbreak{\NAT@aysep\NAT@spacechar}{\@citeb\@extra@b@citeb}%
     \NAT@date}}
\@citea\NAT@nmfmt{\NAT@nm}%
\NAT@spacechar\NAT@hyper@{\NAT@date}}{}{}
\@citea\NAT@hyper@{%
     \NAT@nmfmt{\NAT@nm}%
     \hyper@natlinkbreak{\NAT@spacechar\NAT@@open\if*#1*\else#1\NAT@spacechar\fi}%
       {\@citeb\@extra@b@citeb}%
     \NAT@date}}
\@citea\NAT@nmfmt{\NAT@nm}%
\fi\NAT@hyper@{\NAT@date}}
\begin{document}
\def\spacingset#1{\renewcommand{\baselinestretch}%
{#1}\small\normalsize} \spacingset{1}
\begin{flushleft}
{\Large{\textbf{Tropical Geometric Variation of Phylogenetic Tree Shapes}}}
\newline
\\
Bo Lin$^{1}$, Anthea Monod$^{2,\dagger}$, and Ruriko Yoshida$^{3}$
\\
\bigskip
\bf{1} School of Mathematics, Georgia Institute of Technology, Atlanta, GA, USA 
\\
\bf{2} Department of Mathematics, Imperial College London, UK
\\
\bf{3} Department of Operations Research, Naval Postgraduate School, Monterey, CA, USA
\\
\bigskip
$\dagger$ Corresponding e-mail: a.monod@imperial.ac.uk\\
\end{flushleft}


\section*{Abstract}

We study the behavior of phylogenetic tree shapes in the tropical geometric interpretation of tree space.  Tree shapes are formally referred to as tree topologies; a tree topology can also be thought of as a tree combinatorial type, which is given by the tree's branching configuration and leaf labeling.  We use the tropical line segment as a framework to define notions of variance as well as invariance of tree topologies: we provide a combinatorial search theorem that describes all tree topologies occurring along a tropical line segment, as well as a setting under which tree topologies do not change along a tropical line segment.  Our study is motivated by comparison to the moduli space endowed with a geodesic metric proposed by Billera, Holmes, and Vogtmann (referred to as BHV space); we consider the tropical geometric setting as an alternative framework to BHV space for sets of phylogenetic trees.  We give an algorithm to compute tropical line segments which is lower in computational complexity than the fastest method currently available for BHV geodesics and show that its trajectory behaves more subtly: while the BHV geodesic traverses the origin for vastly different tree topologies, the tropical line segment bypasses it.

\paragraph{Keywords:} BHV geodesic metric; Clades; Tree topology; Tropical geometry; Tropical line segment.


\section{Introduction}
\label{sec:intro}

Phylogenetic trees are discrete mathematical objects that capture the evolutionary behavior of biological processes; they have been extensively used and studied both as data structures as well as symbolic objects.  Phylogenetic trees are able to encode vastly different evolutionary patterns through various branching configurations, which define the tree's shape or combinatorial type---more formally, the tree's {\em topology}.  In this paper, we focus on phylogenetic trees as discrete geometric objects and study the behavior of their topology using tropical geometry.  Specifically, we propose the {\em tropical line segment} as a framework to study how tree topologies vary.  Our main contributions are a detailed study of the occurrence and behavior of tree topologies and a method to compute tropical line segments in tropical geometric phylogenetic tree space.  Our study is conducted relative to the current standard for the space of phylogenetic trees, proposed by Billera, Holmes, and Vogtmann \citep{BILLERA2001733}, and referred to as {\em BHV space} (after the authors' initials).  BHV space is a moduli space of phylogenetic trees whose geometry is characterized by unique geodesics and whose structure is defined by tree topologies.  

We prove a combinatorial theorem that describes all tree topologies that exist along the tropical line segment between any two given trees, as well as a framework for the notion of invariance of tree topologies on a tropical line segment.  We also give an algorithm that computes tropical line segments and compare tropical line segments to BHV geodesics.  We find that the tropical setting results in a more subtle behavior of trajectories between vastly different tree topologies.

The remainder of our paper is organized as follows.  In Section \ref{sec:trop}, we give formalities on phylogenetic trees and tree spaces; we also provide the basics of tropical geometry and discuss the coincidence of tropical geometry and tree space.  A review on the literature on comparisons between trees is also given, as well as a detailed overview of BHV space and its geometry.  In Section \ref{sec:structure}, we define the tropical line segment and characterize the geometry of phylogenetic tree space in the tropical geometric setting.  In Section \ref{sec:main}, we provide our main results on tree topologies on tropical line segments, which are a combinatorial theorem that provides all tree topologies occurring along a tropical line segment and a notion of invariance of tree topologies on tropical line segments.  We also present an algorithm to compute tropical line segments and compare their behavior relative to BHV geodesics in terms of trajectories across tree space and between different tree topologies.  We show that the tropical line segment does not cross the origin, which differs from the trajectory of BHV geodesics between trees with vastly different tree topologies.  We close with a discussion in Section \ref{sec:end}. 


\section{Trees, Tree Spaces, and Tropical Geometry}
\label{sec:trop}

In this section, we present the basics of tropical geometry, provide background on phylogenetic trees and tree spaces, and discuss the coincidence between tropical geometry and tree spaces.  We overview how trees are compared within tree spaces, focusing on the setting of BHV space and its geometry.  Finally, we define the tropical line segment and discuss some of its geometric properties.

\paragraph{Notation.}  In this paper, $N \in \mathbb{N}$.  We use the notation $[N] := \{1, 2, \ldots, N\}$ and $n := \binom{N}{2}$.  Also, we write interchangeably $w(i,j) = w_{ij} = w_{\{i,j \}}$.

\subsection{Tropical Algebra and Tropical Geometry}

Tropical algebra is based on the following semiring with linearizing operations given as follows.

\begin{definition}
The {\em tropical (max-plus) semiring} is $(\R \cup \{ -\infty \}, \boxplus, \odot)$ with addition and multiplication defined by
\begin{align*}
a \boxplus b & := \max(a,b)\\
a \odot b & := a + b.
\end{align*}
\end{definition}

Existing literature on tropical geometry specifies a min-plus semiring, where the addition of two elements is given by their minimum and denoted by $\oplus$, rather than their maximum: $(\R \cup \{\infty \}, \oplus, \odot)$ \citep{speyer2009tropical}.  The min-plus and max-plus semirings are isomorphic.  In the context of phylogenetic trees, the max-plus semiring is a more appropriate convention to adopt and consistent with existing literature on tropical geometric methods in tree spaces \citep[e.g.,][]{Yoshida2018,monod2018tropical,tang2020tropical}.

The tropical operations are commutative and associative, and multiplication distributes over addition.  Tropical subtraction is not defined, which gives a semiring rather than a ring.  {\em Tropicalization} refers to replacing classical arithmetic operations with the tropical versions.  Tropical geometry is the study of the geometry of nonlinear loci of polynomial systems defined in the tropical semiring.

\subsection{Defining Phylogenetic Trees}

Phylogenetic trees are the fundamental mathematical model for biological evolution.  They are constructed from molecular sequence data of a finite number of species, and graphically represent the evolutionary phylogeny of the species.

\begin{definition}
A {\em phylogenetic tree} $T = (V,E)$ is an acyclic connected graph with at most one vertex of degree 2.  $V$ is a set of vertices that are labeled terminal nodes with degree 1 called {\em leaves}; non-leaf vertices have degree greater than 2.  $E$ is a set of nonnegative length {\em edges} or {\em branches} that represent evolutionary time.  Edges that connect to leaves are called {\em external} or {\em pendant} edges; otherwise, they are known as {\em internal} edges.

When there is a common ancestor from which all leaves evolve, the tree is called {\em rooted} and the {\em root} is a unique node of degree 2.  In rooted trees, the evolution progresses from the common ancestor (root) by a series of bifurcations (edges, $E$) and ends in the terminal nodes (leaves, $V$).  When there is no common ancestor among the leaves, the tree is {\em unrooted}.
\end{definition} 

\begin{remark}
\label{rem:root}
In the case of rooted trees, one may also imagine an edge extending from the root to a leaf labeled 0.  In this case, the interior vertex connecting to the root (leaf label 0) will have degree 3, however, by convention, rooted trees are not depicted this way, which is why the root in such trees appears to be a node of degree 2. 
\end{remark}

Methods for reconstructing phylogenetic trees from molecular sequence data are generally either distance-based or statistical.  Distance-based methods entail specifying distance matrices between the sequences via a genetic distance---such as Hamming distance---and grouping sequences that are closely related under the same node, with branch lengths representing the observed distances between sequences.  For a survey on distance-based tree reconstruction methods, see for example \cite{peng2007distance}.

Statistical reconstruction methods entail specifying some classical statistical criterion, such as likelihood or parsimony, and then optimizing \citep[e.g.,][]{max_pars,Felsenstein1981}.  The criteria are defined on the principle that in DNA evolution, nucleotides are substituted following a continuous-time Markov chain (or, more generally, an independent time-reversible model for finite sites \citep{tavare1986some}).  The motivation for statistical methods for tree reconstruction arises from the uncertainty of the ``true" phylogenetic tree, since different choices of molecular sequences (which may be due to choice of gene or coding region) leads to different gene trees \citep[e.g.,][]{HOLMES200317}.  Additionally, there is an extremely high number of possible tree topologies \citep{schroder1870vier}: the number of tree topologies for a rooted, binary tree (i.e., a bifurcating tree with exactly two descendants stemming from each interior node) with $N$ leaves is
\begin{equation}
\label{eq:num_top}
(2N-3)!! = (2N-3) \times (2N-5) \times \cdots \times 3.
\end{equation}  

Previous work has shown that solutions to statistical optimization problems are tractable under certain conditions and assumptions.  For example, under uniform distributivity, the optimization of parsimony-based objective functions is known to be an NP-complete Steiner tree problem \citep{FOULDS198243}.  Various restrictions of the Steiner tree problem (e.g., the minimum spanning tree problem) can be solved in polynomial time \citep{juhl2014geosteiner}.  In biological applications where the data and specific problem of study allow certain distributional assumptions (e.g., identifiability of mixture distributions), statistical methods can be easy and computationally efficient to implement \citep[e.g.,][]{allman2008phylogenetic,long2015identifiability,rhodes2012identifiability}.  The focus of this paper, however, is on the number of tree topologies (\ref{eq:num_top}) and their occurrences within tree spaces.

\begin{definition}
For a phylogenetic tree $T$ with $N$ leaves labeled by $[N]$, and with branch length $b_e \in \R_{\geq 0}$ associated to each edge $e$ in $T$, its {\em tree metric} is the map
\begin{align*}
w: [N] \times [N] & \rightarrow \R_{\geq 0}\\
w(i,j) & \mapsto \bigodot_{e \in P_{\{i,j \}}} b_e,
\end{align*}
where $P_{\{ i,j \}}$ is the unique path between leaves $i$ and $j$.
\end{definition}

Tree metrics are metric representations of phylogenetic trees in terms of pairwise distances between leaves.  Tree metrics may also be represented as {\em cophenetic vectors} \citep{Cardona2013},
\begin{equation}
\label{eq:coph}
(w(1,2),\, w(1,3),\, \ldots,\, w(N-1, N)) \in \R^n_{\geq 0},
\end{equation}
where the entries are sorted lexicographically.

\begin{definition}[Four-point condition, \cite{BUNEMAN197448}]
A tree metric satisfies the {\em four-point condition} and hence defines a tree if and only if the maximum among the {\em Pl\"ucker relations},
\begin{equation}
\label{eq:plucker1}
\begin{split}
w(i,j) \odot w(k, \ell),\\
w(i,k) \odot w(j, \ell),\\
w(i, \ell) \odot w(j,k),
\end{split}
\end{equation}
is attained at least twice for $1 \leq i < j < k < \ell \leq N$, or, equivalently, if
\begin{equation}
\label{eq:plucker2}
w(i,j) \odot w(k, \ell) \leq w(i,k) \odot w(j, \ell) \boxplus w(i, \ell) \odot w(j,k)
\end{equation}
for all distinct $i, j, k, \ell \in [N]$.
\end{definition}

The above technical condition characterizes phylogenetic trees; in particular, the {\em space of phylogenetic trees with $N$ leaves}, $\mathcal{T}_N$, is the collection of all $n$-tuples $\{w(i,j)\}_{1 \leq i < j \leq N}$ that satisfy the four-point condition (\ref{eq:plucker2}), or equivalently, where the maximum among (\ref{eq:plucker1}) is achieved at least twice.  For examples and counterexamples of the four-point condition with illustrations, see \cite{monod2018tropical}.  

The coincidence between the space of phylogenetic trees and tropical geometry arises through the four-point condition as follows.  \cite{Speyer2004} identify a homeomorphism between $\mathcal{T}_N$ and a tropical version of the Grassmannian of 2-planes in $N$ dimensions: the Grassmannian may be mapped to a projective variety via the Pl\"{u}cker embedding, which, when interpreted tropically, gives the homeomorphism by \cite{Speyer2004}, by recovering the four-point condition defining phylogenetic trees.  This endows the space of phylogenetic trees with a tropical geometric structure; in particular, the space of phylogenetic trees is a tropical variety.

The four-point condition may be strengthened to define an important subclass of trees as follows.

\begin{definition}[Three-point condition, \cite{JARDINE1967173}]
A tree metric satisfies the {\em three-point condition} and hence defines a {\em tree ultrametric} if and only if the maximum among
$$
w(i,j), \quad
w(i,k), \quad
w(j,k)
$$
is attained at least twice for $1 \leq i < j < k \leq N$.
\end{definition}

The {\em space of tree ultrametrics with $N$ leaves}, $\mathcal{U}_N$, is the collection of all $n$-tuples $\{ w(i,j) \}_{1 \leq i < j \leq N}$ satisfying the three-point condition.  A rooted phylogenetic tree $T$ is {\em equidistant} if the distance from every leaf to its root is constant.

\begin{proposition}[Proposition 12, \cite{monod2018tropical}]
\label{prop:ultra}
A tree metric $w$ for a phylogenetic tree $T$ is a tree ultrametric if and only if $T$ is equidistant.
\end{proposition}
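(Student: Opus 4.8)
The plan is to prove both implications directly from the definitions, translating the three-point condition into a statement about distances to a candidate root. Throughout, for leaves $i,j$ I write $d(i,j) = w(i,j)$ for the length of the unique path $P_{\{i,j\}}$, and I will use the standard tree median relations: for any three leaves $i,j,k$ there is a unique vertex $x = \mathrm{med}(i,j,k)$ lying on all three pairwise paths, so that $w(i,j) = d(i,x) + d(x,j)$, $w(i,k) = d(i,x) + d(x,k)$, and $w(j,k) = d(j,x) + d(x,k)$. These identities are the only combinatorial input I need.

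For the implication that $T$ equidistant $\Rightarrow$ $w$ ultrametric, let $r$ be the root and $h$ the common leaf-to-root distance. For leaves $i,j$ let $a_{ij}$ denote their most recent common ancestor; since $a_{ij}$ lies on both $P_{\{r,i\}}$ and $P_{\{r,j\}}$, equidistance gives $d(i,a_{ij}) = d(j,a_{ij}) = h - d(r,a_{ij})$, hence $w(i,j) = 2\big(h - d(r,a_{ij})\big)$, a decreasing function of the depth $d(r,a_{ij})$. For any three leaves $i,j,k$, two of the ancestors $a_{ij},a_{ik},a_{jk}$ coincide with $\mathrm{mrca}(i,j,k)$, the shallowest of the three, while the remaining one is at least as deep. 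By the displayed monotonicity, the two pairs with the shallowest ancestor attain the largest (and equal) values among $w(i,j),w(i,k),w(j,k)$, so the maximum is attained at least twice: this is exactly the three-point condition.

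For the converse, $w$ ultrametric $\Rightarrow$ $T$ equidistant, I would set $2h := \max_{i,j} w(i,j)$, fix a pair $(p,q)$ attaining it, and let $m$ be the midpoint of $P_{\{p,q\}}$, so $d(p,m) = d(m,q) = h$. The claim is that every leaf lies at distance $h$ from $m$, whereupon rooting $T$ at $m$ (subdividing the edge containing $m$ if necessary, consistent with Remark \ref{rem:root}) exhibits $T$ as equidistant. Fix a leaf $\ell$ and let $x = \mathrm{med}(p,q,\ell) \in P_{\{p,q\}}$. Applying the three-point condition to $p,q,\ell$ and using $w(p,q) = 2h \geq w(p,\ell), w(q,\ell)$ forces at least one of $w(p,\ell), w(q,\ell)$ to equal $2h$; say $w(p,\ell) = 2h$, the other case being symmetric. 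Then $d(p,x)+d(x,\ell) = 2h = d(p,x)+d(x,q)$ gives $d(x,\ell) = d(x,q)$, and $w(q,\ell) = 2\,d(x,q) \leq 2h$ places $x$ on the segment of $P_{\{p,q\}}$ between $q$ and $m$. A short computation then gives $d(m,\ell) = d(m,x) + d(x,\ell) = \big(h - d(x,q)\big) + d(x,q) = h$, as claimed.

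The substantive content sits entirely in the converse, and I expect the main obstacle to be its two load-bearing steps: (i) using the maximal pair $(p,q)$ to pin the maximum in the three-point condition and thereby force $w(p,\ell)$ or $w(q,\ell)$ to be maximal, and (ii) using the median relations to locate $x$ relative to $m$ and collapse $d(m,\ell)$ to $h$. The only thing to watch is handling the degenerate cases cleanly, namely when both $w(p,\ell)$ and $w(q,\ell)$ equal $2h$ (which forces $x=m$) and when $m$ falls in the interior of an edge rather than at a vertex; both are routine once the median identities are in hand, and notably no appeal to uniqueness of the tree recovered from $w$ is required.
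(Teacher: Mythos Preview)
The paper does not supply a proof of this proposition: it is stated with a citation to \cite{monod2018tropical} (Proposition 12 there) and used as a known fact, so there is nothing in the present paper to compare your argument against.

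Evaluated on its own, your argument is correct. The forward direction is the standard observation that in an equidistant tree $w(i,j)=2\big(h-d(r,a_{ij})\big)$, and that among any three leaves two of the pairwise most recent common ancestors coincide with the overall one, forcing the maximum of the three pairwise distances to be attained twice. For the converse, your choice of a maximal pair $(p,q)$ pins the maximum in the three-point condition applied to $\{p,q,\ell\}$, so one of $w(p,\ell),w(q,\ell)$ must equal $2h$; the median identities then place the Steiner point $x$ on the $q$-side of the midpoint $m$ and yield $d(m,\ell)=h$. One small point worth making explicit is why $d(m,\ell)=d(m,x)+d(x,\ell)$: since $x=\mathrm{med}(p,q,\ell)$ is the unique point of $P_{\{p,q\}}$ at which the path to $\ell$ leaves $P_{\{p,q\}}$, and $m\in P_{\{p,q\}}$, the unique $m$--$\ell$ path necessarily passes through $x$; this is implicit in your median setup but deserves a sentence. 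The degenerate cases you flag (both $w(p,\ell)$ and $w(q,\ell)$ equal to $2h$, and $m$ interior to an edge) are indeed routine.
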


\subsection{The Tropical Projective Torus}

For equidistant trees, since the distance between the root and every leaf is constant, this distance (or the tree's height) may always be normalized to 1.  This idea may be generalized to unrooted trees to normalize the evolutionary time between trees by considering an equivalence relation for tree metrics represented by cophenetic vectors $x,y \in \R^n$:
$$
x \sim y \Leftrightarrow x_1 - y_1 = x_2 - y_2 = \cdots = x_n - y_n,
$$
meaning that $x \sim y$ if and only if all coordinates of their difference $x - y$ are equal.  This equivalence relation generates a quotient space known as the {\em tropical projective torus}, denoted by $\R^n/\R\one$, which is the ambient space of tree space, $\mathcal{U}_N \subset \mathcal{T}_N \subset \R^n/\R\one$.  The tropical projective torus $\R^n/\R\one$ may be embedded into $\R^{n-1}$ by considering representatives of the equivalence classes with first coordinate equal to zero, $(x_2 - x_1, x_3 - x_1, \ldots, x_n - x_1) \in \R^{n-1}$.  See \cite{maclagan2015introduction,monod2018tropical,lee2019tropical} for more detail.

\subsection{Comparing Trees via Metrics}
\label{subsec:metrics}

Metrics are a natural tool for comparing trees and providing a quantitative measure of similarity between trees.  Various metrics have been proposed over the past several decades during which quantitative and computational tree studies have been an active research interest.  

Some of the most popular metrics between trees are those that maintain characteristics of Euclidean distance in the inherently non-Euclidean setting of tree space, such as an inner product structure.  Some examples of well-known inner product distances between trees are the path difference, quartet distance, and Robinson--Foulds distance \citep{ROBINSON1981131}.  However, these are known to suffer from structural errors, since many pairs of trees measure the same distance apart under these metrics; as well as interpretive errors, since the existence of large distances between trees does not necessarily mean that there are large differences between shared ancestry of leaves \citep{Steel1993}.  Other metrics use the cophenetic vector representation of tree metrics, treating them simply as points in Euclidean space, and then using the $\ell^\infty$ distance in $\R^n$ \citep{Cardona2013}.  

\begin{remark}
\label{rem:linf}
Notice that the image of the linear mapping from $\mathbb{R}^N$ to $\mathbb{R}^n$ given by
\begin{equation}
\label{eq:linmap}
(x_1, \ldots, x_N) \mapsto (x_i - x_j),
\end{equation}
for all pairs $i < j$, is isomorphic to the tropical projective torus.  Under such a mapping, it is then possible to work in $\mathbb{R}^n$ equipped with the $\ell^\infty$ metric as in \cite{ardila2005subdominant,bernstein2017infinity,bernstein2020infinity}.
\end{remark}

Other popular and well-known metrics include the nearest neighbor interchange metric \citep{waterman1976some}, subtree transfer distance \citep{Allen2001}, and variational distance \citep{steel2006variational}.  There also exist metrics that connect the subfield of mathematical phylogenetics to various other subfields of pure mathematics, such as the algebraic metric \citep{ALBERICH20091320}, which is based on a group structure; \cite{munch2019} also show that the $\ell^\infty$ metric applied to trees \citep{Cardona2013} is in fact an interleaving distance from applied and computational topology.

\paragraph{BHV Space and the Geodesic Metric.}
\label{description:BHV}

We focus and provide more detail on the geometric interpretation of tree space proposed by \cite{BILLERA2001733} for two main reasons.  First, the tropical geometric approach is the most comparable to the BHV setting since it is also geometric in nature and also defines a moduli space.  Second, previous work which this paper builds upon is in direct comparison to BHV space; we follow suit for consistency. 

In BHV space, $\mathcal{T}_N^{\mathrm{BHV}}$, phylogenetic trees are represented as Euclidean vectors; the entries in the vectors are given by internal edge lengths of the trees, external edge lengths are disregarded.  For a phylogenetic tree with $N$ leaves, there are at most $2N-2$ edges: $N$ terminal edges connecting to leaves, and at most $N-2$ internal edges.  Trees in BHV space are therefore represented as vectors in $\R^{N-2}$, for a given tree topology.  The space of phylogenetic trees is then a collection of $(2N-3)!!$-many Euclidean orthants; each orthant may be regarded as the polyhedral cone of $\R^{N-2}$ with all nonnegative coordinates, which correspond to the internal edge lengths in a tree.  

On the orthant boundaries, there is at least one zero coordinate that occurs; orthant boundaries represent trees with collapsed internal edges.  The orthants are joined along the orthant boundaries.  In general, tree topologies of the orthants determine the adjacency: boundary trees from two different orthants (i.e., trees with two different topologies) may characterize the same polytomic topology (or split) so these two orthants are grafted together along this boundary when the trees from each topology with collapsed internal edges coincide.  Two adjacent orthants, therefore, represent similar, yet distinct, tree topologies.  Orthants are grafted at right-angles, resulting in a stratified space.

The right-angle grafting has a direct implication on the curvature of BHV space; BHV space satisfies the $\mathrm{CAT}(0)$ property of flag complexes, and geodesics are thus unique.  The geodesic characterizes a metric on BHV space, where the length of the geodesic between any two trees is the distance.  Geodesics are computable on BHV space; the geodesic between any two trees represented by their internal edge lengths is first computed, external edges are then factored in afterwards to compute the overall distance.  For two trees in the same orthant, the shortest path between them is simply the straight line measured by the Euclidean distance between them.  For trees in different orthants, the difficulty arises in establishing the sequence of orthants to traverse to give the shortest distance between the trees.  For trees that are not in neighboring orthants and especially when the tree topologies are very different, the geodesic often passes through the origin (or star tree, where all internal edges are collapsed); paths that traverse the origin are referred to as {\em cone paths}.  For trees with four leaves, the sequence of orthants containing the shortest path between two trees can be systematically computed by a grid search, but for larger trees, such a search is intractable.  \cite{Owen:2011:FAC:1916480.1916603} give the fastest available algorithm to date to find the geodesic path between any two trees in BHV space, which runs in quartic time in the number of leaves $N$.


\subsection{Palm Tree Space}

\cite{monod2018tropical} present an alternative geometric construction of phylogenetic tree space based on tropical geometry and study its analytic and topological properties with the aim of statistical inference and data analysis in mind.  Trees are represented by cophenetic vectors (\ref{eq:coph}); external edge lengths are thus included.  Endowed with the {\em tropical metric}, the resulting metric moduli space is referred to as {\em palm tree space} (tropical tree space).

The tropical metric is a generalized Hilbert projective metric function and arises in other tropical geometric settings \cite[e.g.,][]{AKIAN20113261,COHEN2004395}.  It has also been used in previous work studying phylogenetic trees \citep{doi:10.1137/16M1079841,lin2016tropical,Yoshida2018}.  The tropical metric is combinatorial in nature and is given by the difference between the maximum and minimum of the differences between the tree coordinates; it is a proper metric \citep{monod2018tropical}.  This metric also enjoys other interpretations following the relationship of isomorphism discussed in Remark \ref{rem:linf}; in particular, restricting to the image of the map (\ref{eq:linmap}), the tropical metric is precisely the $\ell^\infty$ metric.  

Under the tropical metric, properties that are desirable for statistical inference (such as hypothesis testing) and exact probabilistic studies (such as concentration inequalities and convergence studies) are satisfied and well-defined.  Geodesics, however, are not unique; there are infinitely many geodesics between any two points.  This indicates a more complex geometry than the $\mathrm{CAT}(0)$ structure of BHV space.  \cite{lee2019tropical} use optimal transport theory to define and study the Wasserstein distances on the tropical projective torus and thus give an algorithm to compute the set of all infinitely-many geodesics on the tropical projective torus.


\section{Structure and Geometry of Tropical Geometric Tree Space}
\label{sec:structure}

BHV space and palm tree space are both moduli spaces and are both inherently geometric constructions.  In this section, for comparative purposes and to present the framework of our main results, we study the structure and geometry of the tropical geometric interpretation of phylogenetic tree space.

\subsection{Polyhedral Structure}

In the same way that $\mathcal{T}_N^{\mathrm{BHV}}$ is constructed as the union of polyhedra, where each polyhedron corresponds to one distinct tree topology, the structure of $\mathcal{T}_N$ interpreted tropically is also given by such a union.

\begin{proposition}[Proposition 4.3.10, \cite{maclagan2015introduction}]
\label{prop:polyhedra}
The space $\mathcal{T}_{N}$ is the union of $(2N-5)!!$ polyhedra in $\mathbb{R}^n/\mathbb{R}\one$ with dimension $N-3$.
\end{proposition}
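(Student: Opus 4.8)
The plan is to exploit the identification of $\mathcal{T}_N$ with the tropical Grassmannian of $2$-planes established above (via \cite{Speyer2004} and the four-point condition), under which every point of $\mathcal{T}_N$ is a tree metric and hence corresponds to a unique phylogenetic tree together with an assignment of nonnegative edge lengths. I would therefore stratify $\mathcal{T}_N$ by the combinatorial type (topology) of the underlying tree: for a fixed trivalent (binary) topology $\sigma$, let $C_\sigma$ denote the set of all cophenetic vectors obtained by assigning nonnegative lengths to the edges of $\sigma$, allowing internal lengths to vanish. Since $w(i,j)$ is the sum of the lengths of the edges along the unique path $P_{\{i,j\}}$, the map from edge lengths to cophenetic vector is linear and given by a $0/1$ incidence matrix, so its image $C_\sigma$ is a polyhedral cone. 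The first task is to show that the $C_\sigma$, as $\sigma$ ranges over trivalent topologies, are exactly the maximal polyhedra of $\mathcal{T}_N$: every tree metric is realized by some tree, every tree is an internal-edge contraction of some trivalent tree, so the $C_\sigma$ cover $\mathcal{T}_N$; meanwhile non-trivalent topologies correspond to the vanishing of one or more internal edge lengths and hence lie on the shared boundaries where adjacent cones are glued.

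The count of maximal polyhedra then reduces to counting trivalent unrooted trees on $N$ labeled leaves, which I would carry out by a leaf-addition recursion. The unique trivalent tree on $3$ leaves has $3$ edges, and a trivalent tree on $k-1$ leaves has $2(k-1)-3 = 2k-5$ edges, so attaching the $k$-th leaf by subdividing any one of these edges produces $2k-5$ distinct topologies and realizes every trivalent tree on $k$ leaves exactly once. Multiplying over $k = 4, \ldots, N$ yields $3 \cdot 5 \cdots (2N-5) = (2N-5)!!$, as claimed.

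For the dimension, I would count edges of a trivalent tree. With $N$ pendant edges and a total of $2N-3$ edges (which follows from the degree-sum relation: $N$ leaves of degree $1$ and $N-2$ internal vertices of degree $3$ force $E = 2N-3$), there are exactly $(2N-3)-N = N-3$ internal edges. The cone $C_\sigma$ is the image of the nonnegative edge-length orthant under the injective incidence map, so it is $(2N-3)$-dimensional in $\R^n$; passing to the quotient $\R^n/\R\one$ and, more to the point, factoring out the full lineality space of $\mathcal{T}_N$ — the $N$-dimensional space of reparametrizations that shift pendant edge lengths, which contains $\R\one$ — collapses precisely the $N$ pendant-edge directions and leaves the $N-3$ internal-edge directions. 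Hence each polyhedron has dimension $N-3$.

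The main obstacle I anticipate is the dimension bookkeeping rather than the enumeration: one must pin down the lineality space precisely and verify that the incidence map has full rank $2N-3$ on the edge lengths while the lineality space (spanned by the $N$ single-leaf shift vectors $v_i$, with $\R\one = \tfrac12\sum_i v_i$) sits entirely inside the linear span of $C_\sigma$, so that the quotient drops the rank to exactly $N-3$ with no internal-edge direction spuriously lost. A secondary point requiring care is the gluing: verifying that a point on $\partial C_\sigma$ is exactly a metric realizable by the corresponding contracted tree, so that the strata form a genuine polyhedral complex covering $\mathcal{T}_N$ with the trivalent cones as its maximal faces.
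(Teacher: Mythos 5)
The paper offers no proof of this proposition for you to be compared against: it is imported wholesale as Proposition 4.3.10 of \cite{maclagan2015introduction} and used as a black box. Judged on its own, your argument is correct and is essentially the textbook proof of the cited result: stratify $\mathcal{T}_N$ by trivalent combinatorial type, observe that each stratum is the image of an orthant of edge lengths under the $0/1$ path-incidence map, count the trivalent types by the leaf-insertion recursion ($3\cdot 5\cdots(2N-5)=(2N-5)!!$, a genuine bijection because deleting leaf $k$ and suppressing the resulting degree-$2$ vertex inverts the subdivision step), and extract the dimension from the edge count $2N-3=N+(N-3)$. Note that for the statement as written (``union of polyhedra'') only the covering is needed, not the finer claim that the cones form a polyhedral complex, so your ``secondary point'' about gluing can be dropped.

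The one delicate point is the dimension bookkeeping, and you correctly put your finger on it; it is worth saying plainly that the proposition as transcribed in the paper is loose here. The span of a maximal cone $C_\sigma$ has dimension $2N-3$ in $\R^n$, so its image in $\R^n/\R\one$ has dimension $2N-4$; the number $N-3$ appears only after quotienting by the full $N$-dimensional space $L$ of pendant-edge shifts (with $\one=\tfrac12\sum_i v_i\in L$), which is how \cite{maclagan2015introduction} and \cite{Speyer2004} present the fan structure. Two refinements to your write-up: (i) under the paper's convention that edge lengths are nonnegative, $L$ lies in $\mathrm{span}(C_\sigma)$ but not in $C_\sigma$ itself, so $L$ is not literally a lineality space \emph{of the cone}; the clean formulation is that the image of $C_\sigma$ in $\R^n/L$ has dimension $(2N-3)-N=N-3$, which requires only $L\subset\mathrm{span}(C_\sigma)$. (ii) The full-rank claim for the incidence map does need a one-line justification---edge lengths of a tree with no degree-$2$ vertices are recoverable from leaf distances via Gromov products---but this is standard. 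Neither point undermines your outline; with them spelled out, you have proved the proposition in the form in which Maclagan--Sturmfels actually prove it.
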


As in BHV space, in tropical geometric tree space, each polyhedron may be considered as a polyhedral cone of $\R^{N-3}$ with all nonnegative coordinates corresponding to the cophenetic vector representation of trees.  Here, the cone in consideration is the usual convex cone as a subset of a vector space on an ordered field, closed under linear combinations with positive coefficients (or equivalently, the set spanned by conical combinations of vectors).

\begin{example}

\begin{figure}[h]
	\centering
	\includegraphics[scale=0.4]{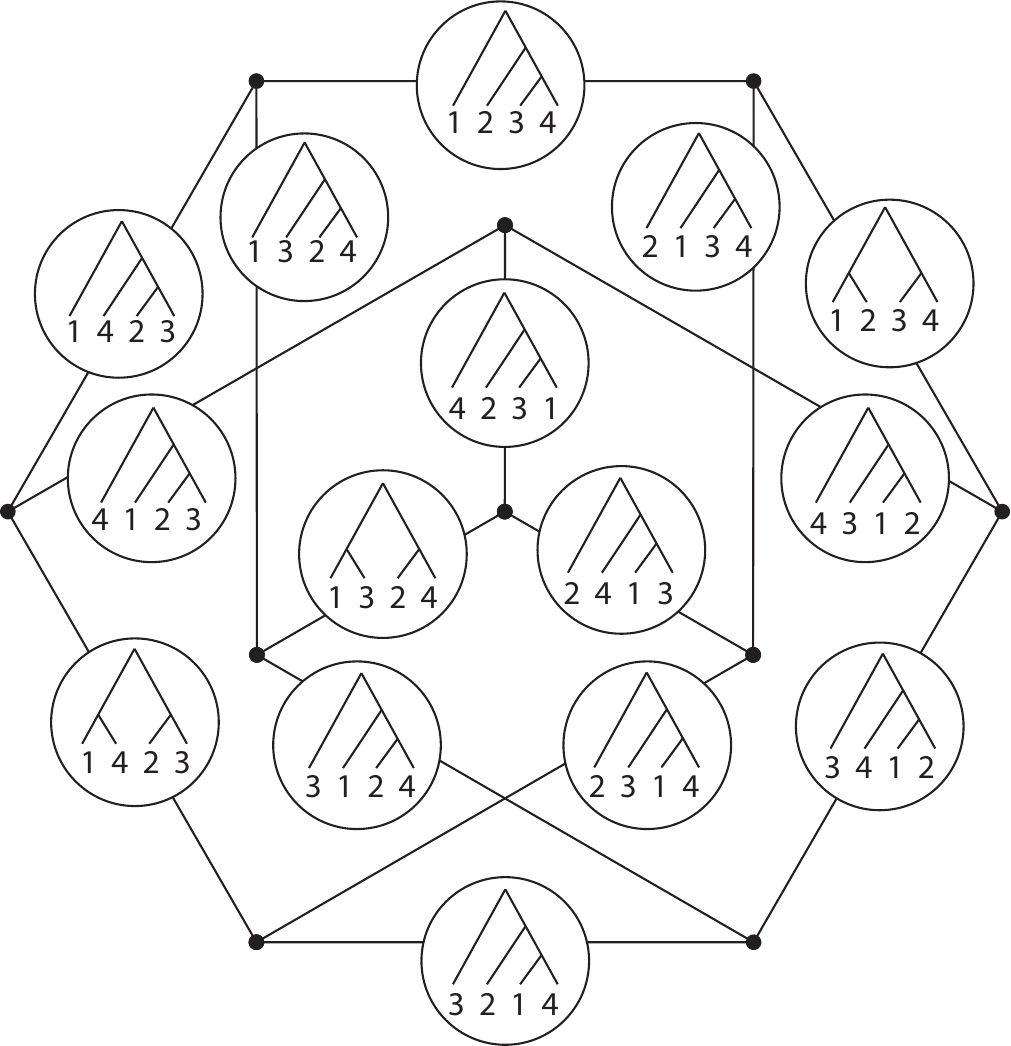}
	\caption{The Petersen graph depicting the $15$ cones in $\mathcal{T}_{5}$.}\label{fig:petersen}
\end{figure}

When $N=5$, $\mathcal{T}_5$ has $5!!=15$ cones, which correspond to the edges of the {\em Petersen graph}, depicted in Figure \ref{fig:petersen}.  Here, the root is considered as a leaf (see Remark \ref{rem:root} above), and the other leaf labels are numbered 1 through 4.  
The Petersen graph here is illustrative and represents the configuration of the polyhedra according to tree topologies; the edges represent the cones corresponding to a tree topology, the nodes of the graph represent the commonality between the coinciding edges (i.e., between two tree topologies).

To interpret the correspondence to trees in this Petersen graph, consider the leftmost upper graph vertex on the outer hexagon and notice that there are three edges (cones) that meet at this vertex.  The figures of the trees associated with these cones (illustrated in the circles) share the property that the edge to leaf $1$ is the longest edge, and the remaining three leaves are permuted.  Now, consider the graph vertex at the very center of the graph, inside the hexagon: the property common to the trees associated with the three cones meeting at this vertex is that the pair of leaves labeled $1$ and $3$ are coupled, symmetric, and are joined at the same internal node of the tree that is not the root.  The remaining graph vertices may be interpreted in a similar manner, in the sense that they all share one of these two commonalities, under symmetry of and up to leaf labeling scheme.  Thus, the $10$ vertices of the Petersen graph here are given by the number of ways to choose a label for the longest branch length in a tree that connects directly to the root, among $4$ choices of leaf labels (i.e., $\binom{4}{2}$); and the number of ways to choose pairs of leaf labels that are coupled, symmetric, and correspond to the same internal node of the tree that is not directly linked to the root (i.e., $\binom{4}{1}$), so $\binom{4}{2} + \binom{4}{1} = 10$.  The intuition of the graph edges as cones lies in the property of closedness under scaling of branch lengths, for each tree type associated with each graph edge in the figure (illustrated in the circles). 

The Petersen graph also coincides in the context of BHV space when considering $N=4$ leaves, as the so-called {\em link of the origin}; see \cite{BILLERA2001733} for further details.  
\end{example}

\subsection{Geometry of the Space of Tree Ultrametrics}
\label{subsec:ultra}

We now focus on the case of rooted equidistant trees as in \cite{BILLERA2001733}.  We begin by providing definitions of tropical geometric line and set objects and outlining important properties of these objects in the space of tree ultrametrics.

\paragraph{Notation.}  For a positive integer $N$, let $p_{N}$ be the set of all pairs in $[N]$.  For convenience, we denote a tree ultrametric with $N$ leaves by $(w_{p})_{p\in p_{N}}$, where for $p=\{i,j\}$, $w_{p} = w(\min(i,j),\max(i,j))$.

\begin{definition}
\label{def:3pt}
Consider the subspace of $L_N \subseteq \mathbb{R}^n$
defined by the linear equations
\begin{equation}
\label{eq:trop_eq}
w_{ij} - w_{ik} + w_{jk}=0
\end{equation} 
for $1\leq i < j < k \leq N$ in tree metrics $w$.  
For the linear equations (\ref{eq:trop_eq}) cutting out $L_N$, their (max-plus) tropicalization is $w_{ij} \boxplus w_{ik} \boxplus w_{jk}$: recall that under the trivial valuation, all coefficients are disregarded when tropicalizing.  This tropicalization of $L_N$ is denoted by $\Trop(L_N) \subseteq \mathbb{R}^{n}/\mathbb{R}\one$ and is referred to as the {\em tropical linear space} with points $(w_{p})_{p\in p_{N}}$ where $\max\big(w_{ij},\, w_{ik},\, w_{jk} \big)$ is obtained at least twice for all triples $i,\,j,\,k \in [N]$.
\end{definition}

This is equivalent to the three-point condition for ultrametrics given in Definition \ref{def:3pt}.  An observation from tropical geometry gives a correspondence between the tropical linear space $\Trop(L_N)$ and the graphic matroid of a complete graph with $N$ vertices. 

We have the following geometric characterization of the space $\mathcal{U}_N$ and corresponding characterizations of tropical line segments between ultrametrics.

\begin{theorem}[\cite{ARDILA200638}]
\label{tropicalLine}
The image of $\mathcal{U}_N$ in the tropical projective torus $\mathbb{R}^{n}/\mathbb{R}\one$ coincides with $\Trop(L_N)$.  That is, $\Trop(L_N) = \mathcal{U}_N$.
\end{theorem}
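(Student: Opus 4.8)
The plan is to prove the two set inclusions $\mathcal{U}_N \subseteq \Trop(L_N)$ and $\Trop(L_N) \subseteq \mathcal{U}_N$ separately, working entirely at the level of cophenetic vectors in $\mathbb{R}^n/\mathbb{R}\one$. The key observation, already recorded in Definition \ref{def:3pt}, is that the tropical linear space $\Trop(L_N)$ consists precisely of those points $(w_{ij})$ for which, for every triple $i,j,k \in [N]$, the maximum of $w_{ij}, w_{ik}, w_{jk}$ is attained at least twice. This is verbatim the three-point condition of Definition \ref{def:3pt} (the ultrametric condition). So the entire statement reduces to showing that the three-point condition characterizes exactly the image of $\mathcal{U}_N$.

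First I would establish $\mathcal{U}_N \subseteq \Trop(L_N)$. Take any tree ultrametric $w$ arising from an equidistant rooted tree $T$ (using Proposition \ref{prop:ultra} to pass freely between ``ultrametric'' and ``equidistant tree''). By the very definition of $\mathcal{U}_N$ as the set of cophenetic vectors satisfying the three-point condition, $w$ satisfies: for all $1 \leq i < j < k \leq N$, the maximum among $w(i,j), w(i,k), w(j,k)$ is attained at least twice. This is exactly the membership criterion for $\Trop(L_N)$ given in Definition \ref{def:3pt}, so $w \in \Trop(L_N)$. This direction is essentially a matter of matching definitions.

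The reverse inclusion $\Trop(L_N) \subseteq \mathcal{U}_N$ requires showing that any point of $\mathbb{R}^n/\mathbb{R}\one$ satisfying the three-point condition is genuinely realizable as the cophenetic vector of some equidistant tree. Here I would invoke the classical characterization of ultrametrics: a symmetric function $w$ on pairs is an ultrametric if and only if it satisfies the three-point (strong triangle) inequality, and every ultrametric is realized by a unique rooted equidistant tree (the dendrogram obtained, e.g., by single-linkage clustering on the values $w_{ij}$). Concretely, given $w \in \Trop(L_N)$, I would reconstruct the tree by iteratively merging leaves at heights dictated by the $w_{ij}$ values and verify that the resulting tree's path-distance metric reproduces $w$; the three-point condition guarantees that the nested cluster structure needed for this reconstruction is well-defined and consistent. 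Since $\mathcal{U}_N$ is defined as the set of cophenetic vectors satisfying the three-point condition, this shows $\Trop(L_N) \subseteq \mathcal{U}_N$, completing the equality.

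The main obstacle is the realizability argument in the second inclusion: one must confirm that the three-point condition alone forces the existence of an honest equidistant tree rather than merely an abstract combinatorial constraint. The cleanest route is to cite the standard equivalence between ultrametrics and rooted equidistant trees (going back to \cite{JARDINE1967173}), which is already implicitly available through the three-point condition in Definition \ref{def:3pt} and Proposition \ref{prop:ultra}; the matroid correspondence remarked upon after Definition \ref{def:3pt} offers an alternative, more tropical-geometric justification that the points of $\Trop(L_N)$ index exactly the ultrametric tree structures. Either way, once realizability is secured, the equality $\Trop(L_N) = \mathcal{U}_N$ follows immediately from the two inclusions.
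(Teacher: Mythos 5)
First, a point of order: the paper does not prove this theorem---it is imported, with citation, from \cite{ARDILA200638}---so there is no internal proof to compare yours against. Judged on its own terms and relative to the paper's definitions, your argument is correct and well-organized. Because Definition \ref{def:3pt} declares $\Trop(L_N)$ to be the set of points of $\R^{n}/\R\one$ at which, for every triple $i,j,k$, the maximum of $w_{ij},w_{ik},w_{jk}$ is attained at least twice, and because $\mathcal{U}_N$ is the set of tree metrics satisfying that same three-point condition, your first inclusion is indeed pure definition-matching (with Proposition \ref{prop:ultra} mediating between ``equidistant tree'' and ``ultrametric''). You correctly isolate the only substantive step as realizability: a point of $\Trop(L_N)$ must be shown to arise from an honest equidistant tree with nonnegative branch lengths, and the dendrogram/single-linkage reconstruction you invoke is the standard and correct ingredient for this. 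The one detail you leave implicit is that a point of the quotient $\R^{n}/\R\one$ may have negative entries, so one must first pass to a representative shifted by a suitable multiple of $\one$; this is harmless precisely because the three-point condition is invariant under adding a constant to all coordinates, but it deserves a sentence.

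The caveat worth recording is where the real content of the cited theorem lives, and your proof inherits the paper's way of hiding it. Under the standard definition, $\Trop(L_N)$ is the tropical variety of the ideal of $L_N$: the locus where the maximum is attained at least twice for \emph{every} linear form in the ideal, equivalently for every circuit of the graphic matroid of the complete graph $K_N$, i.e.\ for every cycle, not merely the triangles. In general, imposing the tropicalized conditions only on a generating set yields a strictly larger prevariety, so the assertion that the triangle conditions alone cut out $\Trop(L_N)$ (that the triangles form a tropical basis) is a theorem, and it is exactly the nontrivial part of the Ardila--Klivans result; the paper's Definition \ref{def:3pt} takes this characterization as the definition, and your proof never addresses it. Concretely, a blind proof of the theorem as usually stated would also need the lemma that if the maximum is attained twice on every triangle of $K_N$, then it is attained twice on every cycle (a short induction on cycle length, chording the cycle by a triangle), after which your two inclusions complete the argument. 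So: complete relative to the paper's framework, but with a genuine gap relative to the honest definition of tropicalization---one that your passing appeal to ``the matroid correspondence'' gestures at but does not supply.
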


\begin{definition}
For $x, y \in \R^n/\R\one$, the {\em tropical line segment} with endpoints $x$ and $y$ is the set
$$
\{ a \odot x \boxplus b \odot y \in \R^n/\R\one \mid a,b \in \R \}.
$$
Here, max-plus addition $\boxplus$ for two vectors is performed coordinate-wise.
\end{definition}

The tropical line segment between any two points in $\R^n/\R\one$ is unique and it is a geodesic \citep{monod2018tropical}.  

\begin{definition}
Let $S \subset \mathbb{R}^n$.  If $a \odot  x \boxplus b \odot y \in S$ for all $x,y \in S$ and all $a,b \in \mathbb{R}$, then $S$ is said to be {\em tropically convex}. 

The {\em tropical convex hull} or {\em tropical polytope} of a given subset $V \subset \mathbb{R}^n$ is the smallest tropically-convex subset containing $V \subset \mathbb{R}^n$; it is denoted by $\mathrm{tconv}(V)$.  The tropical convex hull of $V$ may be also written as the set of all tropical linear combinations:
$$
\mathrm{tconv}(V) = \{a_1 \odot v_1 \boxplus a_2 \odot v_2 \boxplus \cdots \boxplus a_n \odot v_n \mid v_1,\ldots,v_n \in V \mbox{ and } a_1,\ldots,a_n \in \mathbb{R} \}.
$$
\end{definition}

\begin{proposition}\label{prop:tropline}
For two tree ultrametrics $T_{1},T_{2}\in \mathcal{U}_{N}$, the tropical line segment generated by $T_{1}$ and $T_{2}$,
$$
a \odot T_1 \boxplus b \odot T_2~\forall~a, b \in \mathbb{R},
$$
is contained in $\mathcal{U}_N$.
In other words, $\mathcal{U}_N$ is tropically convex.
\end{proposition}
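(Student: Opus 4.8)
The plan is to reduce the statement to the three-point condition and then verify a purely pointwise closure property. By Theorem \ref{tropicalLine} we have $\mathcal{U}_N = \Trop(L_N)$, so by Definition \ref{def:3pt} membership in $\mathcal{U}_N$ is equivalent to the three-point condition: for every triple $i<j<k$ the maximum of the three coordinates $w_{ij}, w_{ik}, w_{jk}$ is attained at least twice. Writing $T_1 = (u_p)_{p}$ and $T_2 = (v_p)_{p}$, the coordinates of a generic point $z = a \odot T_1 \boxplus b \odot T_2$ on the segment are $z_p = \max(a + u_p,\, b + v_p)$. Since adding a common constant to all coordinates changes neither the point in $\R^n/\R\one$ nor the validity of the three-point condition, I would fix $a, b \in \R$ arbitrary and check the condition one triple at a time.

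Next I would isolate the statement to a single triple and thereby to three coordinates. Fixing $i<j<k$ and indexing the three associated pairs by $m \in \{1,2,3\}$, set $\tilde u_m := a + u_m$ and $\tilde v_m := b + v_m$. The task becomes the local claim: if both $(\tilde u_1, \tilde u_2, \tilde u_3)$ and $(\tilde v_1, \tilde v_2, \tilde v_3)$ have their maximum attained at least twice, then so does $\big(\max(\tilde u_m, \tilde v_m)\big)_{m}$. In other words, the set of real triples satisfying the three-point condition is closed under coordinatewise maximum, which is exactly tropical addition applied to these three coordinates.

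The local claim I would settle by a short case analysis on the two endpoint maxima. Let $U := \max_m \tilde u_m$, $V := \max_m \tilde v_m$, and $M := \max(U, V)$, which is the overall maximum of the three values $z_m = \max(\tilde u_m, \tilde v_m)$. If $U > V$, then every $\tilde v_m \le V < M$, so $z_m = M$ forces $\tilde u_m = U$, which holds for at least two indices $m$ by the three-point condition on $T_1$; the case $V > U$ is symmetric using $T_2$. If $U = V$, then the indices with $\tilde u_m = U = M$ already supply at least two coordinates equal to $M$. Hence in every case the maximum of the three $z$-coordinates is attained at least twice, so $z$ satisfies the three-point condition and lies in $\mathcal{U}_N$; the endpoints $T_1, T_2$ arise as the limits of this family and already lie in $\mathcal{U}_N$ by hypothesis.

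I do not expect a deep obstacle here: the only delicate point is the tie case $U = V$, where a priori the overall maximum might be contributed by different coordinates on the two sides, and one must confirm that the three-point condition on a single endpoint alone already forces two coinciding maxima. As an alternative route I could instead invoke the general fact that the tropicalization of a linear space is tropically convex, combined with Theorem \ref{tropicalLine}, but the direct argument above is elementary and self-contained.
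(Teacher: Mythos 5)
Your proof is correct and follows essentially the same route as the paper's: both reduce to checking the three-point condition triple by triple (you absorb $a,b$ into shifted coordinates where the paper normalizes $a=b=0$), and both observe that the overall maximum of the coordinatewise maxima must be supplied by one endpoint, whose own three-point condition then yields a second coordinate attaining that maximum. The only cosmetic difference is that you case on which endpoint has the larger triple-maximum, while the paper cases on which endpoint realizes the maximal $z$-coordinate; the substance is identical.
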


\begin{proof}
Since $T_1$ is a tree ultrametric, $a\odot T_{1}$ remains a tree ultrametric; this is also true for $b\odot T_{2}$.  Thus, we may assume $a=b=0$.

Suppose $T_{1}=(w_{\{i,j\}})_{1\le i<j\le N}$ and $T_{2}=(w'_{\{i,j\}})_{1\le i < j\le N}$, then $T_{1} \boxplus T_{2}=(w_{\{i,j\}} \boxplus w'_{\{i,j\}})_{1\le i < j\le N}$.  Let $z_{\{i,j\}}= w_{\{i,j\}} \boxplus w'_{\{i,j\}}$.  It suffices to show that for any $1\le i<j<k\le n$, we have that the maximum among $z_{\{i,j\}},\, z_{\{i,k\}},\, z_{\{j,k\}}$ is attained at least twice in order for $T_1 \boxplus T_2$ to be a tree ultrametric.  Let $M$ be this maximum, and set $M := z_{\{i,j\}}$.  Then either $w_{\{i,j\}}=M$ or $w'_{\{i,j\}}=M$.  If $w_{\{i,j\}}=M$, at least one of $w_{\{i,k\}},\, w_{\{j,k\}}$ is equal to $M$ since $T_{1}$ is a tree ultrametric, therefore at least one of $z_{\{i,k\}},\, z_{\{j,k\}}$ is also equal to $M$.  Similarly, if $w'_{\{i,j\}}=M$, at least one of $w'_{\{i,k\}},\, w'_{\{j,k\}}$ is equal to $M$, since $T_{2}$ is a tree ultrametric, therefore at least one of $z_{\{i,k\}},\, z_{\{j,k\}}$ is also equal to $M$.  Thus, in either case the maximum among $z_{\{i,j\}},\, z_{\{i,k\}},\, z_{\{j,k\}}$ is attained at least twice, hence $T_{1} \boxplus T_{2}$ is a tree ultrametric. 
\end{proof}

This result generalizes outside the context of trees; in general, tropical linear spaces in the tropical projective torus are tropically convex (see Proposition 5.2.8 of \cite{maclagan2015introduction}).


\section{Tree Topologies on Tropical Line Segments}
\label{sec:main}

In this section, we present our main results, which include a combinatorial study of the variation of tree topologies as well as a notion of invariance within the framework of the tropical line segment.  We also give an algorithm for computing tropical line segments.

Note that our study is conducted in the setting of rooted equidistant trees (ultrametrics).  The relevance for a study dedicated to ultrametrics specifically in this paper is twofold.  First, it allows for a parallel geometric comparison between BHV space and the tropical interpretation of phylogenetic tree space: As mentioned above in Section \ref{description:BHV}, the geometric significance of BHV space lies in its construction when only internal edges are considered.  Its structure is based on the union of orthants, where each orthant corresponds to a specific rooted tree topology.  In other words, the definition of BHV space inherently relies on tree topologies.  The tropical construction of tree space, while also polyhedral (see Proposition \ref{prop:polyhedra}), has a more complex algebraic structure, which we explore here by studying rooted equidistant tree topologies and their occurrence within the tropical construction of tree space.

The second motivation for studying ultrametric tree topologies lies in the context of applications: ultrametrics correspond to {\em coalescent processes}, which model important biological phenomena, such as cancer evolution \citep{Kingman1461}.  In phylogenomics, the {\em coalescent model} is often used to model gene trees given a species tree (see e.g., \cite{Knowles2009,Rosenberg2003,Tian2014} for further details).  The coalescent model takes two parameters, the population size and species depth (i.e., the number of generators from the most recent common ancestor (MRCA) of all individuals at present).  The species depth coincides with the height of each gene tree from the root (that is, the MRCA) to each leaf representing an individual in the present time.  The output of the coalescent model is a set of equidistant gene trees, since the number of generations from their MRCA to each individual in the present time are the same by model construction.  Understanding the structure of ultrametrics is an important step towards the modeling and analysis of coalescent biological processes.


\subsection{Tree Variation on Tropical Line Segments}
\label{sec:tree_topology}

The tropical linear space coincides with the space of tree ultrametrics, and hence, that tropically-convex sets (and therefore tropical line segments) are also fully contained in the space of tree ultrametrics.  This endows the space of tree ultrametrics with a tropical structure, and now allows us to study the behavior of points (trees) along tropical line segments.  In particular, this allows us to characterize ultrametric tree topologies geometrically, thereby providing a description of the tropically-constructed tree space that is comparable to the geometry of BHV space for rooted trees and zero-length external edges (as originally described by \cite{BILLERA2001733}).

The strategy that we implement is largely combinatorial.  We first formalize the definition of a tree topology as a collection of subsets of leaves, and use these subsets to define notions of size, and in particular, largest and smallest subsets.  Given these upper and lower bounds, we then define an equivalence relation and a partial order that allow us to iteratively and combinatorially partition and compare leaf subsets and tree topologies.  This gives us a framework to study shapes of trees: specifically, Theorem \ref{thm:compatible} is a combinatorial theorem that describes the possible tree topologies that exist along a tropical line segment. 

\begin{definition}\label{def:treetopology}
A {\em tree topology} $F$ on $[N]$ is a collection of subsets called {\em clades} $S \subseteq [N]$, where $2\le |S|\le N-1$ and for any two distinct clades $S_{1},S_{2}\in F$, exactly one of the following {\em nested set conditions} holds:
\begin{equation}\label{eq:nested}
\begin{split}
S_{1} & \subsetneq S_{2},\\
S_{2} & \subsetneq S_{1},\\
S_{1} \cap S_{2} & =\emptyset.
\end{split}
\end{equation}
$F$ is said to be {\em full dimensional} if $|F|=N-2$.
\end{definition}

Clades always belong to $[N]$ rather than $[N] \cup \{0\}$, since we may always choose the clade excluding the root (i.e., the leaf with label $0$).  In this manner, they allow for an alternative representation of trees over tree metric vectors $w$ or matrices $W$.

\begin{example}

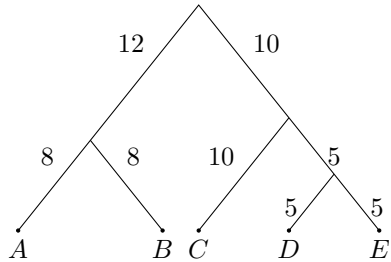
\begin{figure}[h]
			\centering
			\begin{tikzpicture}[scale=0.6]
				\draw (0,0) -- (4,5) -- (8,0);
				\draw (1.6,2) -- (3.2,0);
				\draw (6,2.5) -- (4,0);
				\draw (7,1.25) -- (6,0);
				\filldraw [black] (0,0) circle (1pt);
				\filldraw [black] (8,0) circle (1pt);
				\filldraw [black] (3.2,0) circle (1pt);
				\filldraw [black] (4,0) circle (1pt);
				\filldraw [black] (6,0) circle (1pt);
				\node [below] at (0,0) {$A$};
				\node [below] at (3.2,0) {$B$};
				\node [below] at (4,0) {$C$};
				\node [below] at (6,0) {$D$};
				\node [below] at (8,0) {$E$};
				\node [above left] at (3,3.75) {$12$};
				\node [above left] at (1,1.25) {$8$};
				\node [above right] at (2.2,1.25) {$8$};
				\node [above right] at (5,3.75) {$10$};
				\node [above left] at (5,1.25) {$10$};
				\node [above] at (7,1.25) {$5$};
				\node [left] at (6.4,0.5) {$5$};
				\node [right] at (7.6,0.5) {$5$};
			\end{tikzpicture}
			\caption{Example of an equidistant tree.  This tree is also an ultrametric.}
			\label{fig:eq_tree}
	\end{figure}

For the tree in Figure \ref{fig:eq_tree}, there are two ways to express this tree:
\begin{enumerate}[1.]
\item As a tree ultrametric in $\mathcal{U}_5$: $(16, 40, 40, 40, 40, 40, 40, 20, 20, 10)$
\item As a vector in an ambient space, in terms of lengths of internal edges:\\
$(0, \ldots, 0, 12, 10, 5, 0, \ldots, 0)$, where the values of the nonzero coordinates in this vector are the internal edge lengths leading to clades $\{A,B\}$, $\{C,D,E\}$ and $\{D,E\}$.
\end{enumerate}
\end{example}

In general, using the internal edges to represent trees allows for an iterative construction of a family of clades satisfying one of the nested set conditions (\ref{eq:nested}).  In the case of Figure \ref{fig:eq_tree}, this family is $\{A,B\}$, $\{C, D, E\}$ and $\{D, E\}$.

The following lemma provides intuition on the definition of full dimensionality given above in Definition \ref{def:treetopology}.
\begin{lemma}\label{lem:dimub}
Let $N\ge 2$ be an integer.  For any tree topology $F$ on a ground set of $N$ elements, we have $|F|\le N-2$.
\end{lemma}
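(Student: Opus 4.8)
The plan is to recognize that the nested set conditions \eqref{eq:nested} say exactly that $F$ is a \emph{laminar family} on $[N]$: any two distinct clades are nested or disjoint. The two size constraints $|S|\ge 2$ and $|S|\le N-1$ are what pin the count down to $N-2$, and I would prove the bound by strong induction on $N$. The base case $N=2$ is immediate: the requirement $2\le |S|\le N-1=1$ is vacuous, so $F=\emptyset$ and $|F|=0=N-2$. (The case $N=3$ is a useful sanity check: clades must have size exactly $2$, and any two distinct $2$-subsets of $[3]$ meet in a single point, violating \eqref{eq:nested}, so $|F|\le 1=N-2$.)

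For the inductive step I would isolate the \emph{maximal} clades $M_1,\dots,M_k$ of $F$, meaning those not properly contained in any other clade. The nested set conditions force any two distinct maximal clades to be disjoint, since inclusion would contradict maximality; and every clade lies in a unique $M_i$, because a clade contained in two disjoint maximal clades would be empty, impossible as $|S|\ge 2$. This yields a genuine partition $F=\bigsqcup_{i=1}^{k} F_i$ with $F_i=\{S\in F: S\subseteq M_i\}$. Writing $m_i=|M_i|$, the clades strictly inside $M_i$ form a laminar family on the ground set $M_i$ with all sizes in $[2,m_i-1]$; since $m_i\le N-1<N$, the induction hypothesis bounds their number by $m_i-2$, whence $|F_i|\le (m_i-2)+1=m_i-1$. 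Summing over $i$ gives $|F|\le \big(\sum_{i=1}^{k} m_i\big)-k$.

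The crux---and the only step I expect to be subtle---is squeezing out the final $-1$ that separates the sharp bound $N-2$ from the weaker $N-1$. Because the $M_i$ are pairwise disjoint subsets of $[N]$ we always have $\sum_i m_i\le N$, which already yields $|F|\le N-k\le N-2$ as soon as $k\ge 2$. The delicate case is a single maximal clade, $k=1$: there the disjointness estimate only gives $\sum_i m_i=m_1\le N$, hence $N-1$. This is precisely where the constraint $|S|\le N-1$ must be invoked, as no clade can equal all of $[N]$: thus $m_1\le N-1$ and $|F|\le m_1-1\le N-2$. In both regimes $|F|\le N-2$, completing the induction.
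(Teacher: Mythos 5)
Your proof is correct and follows essentially the same route as the paper's: induction on $N$, decomposition of $F$ via its pairwise-disjoint maximal clades, with the single-maximal-clade case ($k=1$) handled by the constraint $|S|\le N-1$ and the case $k\ge 2$ handled by disjointness of the maximal clades. The only cosmetic differences are that you fold both cases into the uniform bound $|F|\le\left(\sum_{i} m_i\right)-k$ before splitting, and you use only $N=2$ as the base of a strong induction, whereas the paper verifies $N=2$ and $N=3$ explicitly.
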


\begin{proof}
We proceed by induction on $N$.  When $N=2$, since $2 > N-1$, $F$ is necessarily empty because clades $S$ cannot satisfy $2 \leq |S| \leq 1$.  When $N=3$, all clades of $F$ must have cardinality $2$, because the cardinality is at least $2$ and at most $2$, and thus are among $\{1,2\},\{1,3\},\{2,3\}$.  But any two of these clades do not satisfy one of the nested set conditions (\ref{eq:nested}).  Hence $|F|\le 1$, and the base case for $N=2,3$ holds.
	
	Next, suppose Lemma \ref{lem:dimub} holds for $3\le N\le m$ where $m\ge 3$.  Consider the case when $N=m+1$: We take all clades $S$ belonging to $F$ that are maximal in terms of inclusion.  There are two cases:
	\begin{enumerate}[(i)]
	\item There is a unique maximal clade $S_{\max}$ in $F$: $|S_{\max}|\le m$ and all clades of $F$ are subsets of $S_{\max}$.  If $|S_{\max}|\le 2$, then $F$ has a unique clade, which is $S_{\max}$ and therefore $|F|=1\le m-1=N-2$; otherwise $|S_{\max}|\ge 3$.  Consider the family $F \backslash \{S_{\max}\}$ on the ground set $S_{\max}$.  For any $S\in F \backslash \{S_{\max}\}$, since $S$ is a proper subset of $S_{\max}$, we have $2\le |S|\le |S_{\max}|-1$.  Since $F \backslash \{S_{\max}\}$ is still a nested set, it is a tree topology on $S_{\max}$.  By the induction hypothesis, $|F \backslash \{S_{\max}\}|\le |S_{\max}|-2\le m-2$.  Thus, $|F|=|F \backslash \{S_{\max}\}|+1\le m-1=N-2$ and Lemma \ref{lem:dimub} holds for $N=m+1$.
	
	\item There are at least two maximal clades in $F$: Let these clades be $S_{1},S_{2},\ldots,S_{k}$, with $k\ge 2$, then $2\le |S_{i}|\le m$ for $1\le i\le k$.  Since $F$ is a nested set, these $S_{i}$ are pairwise disjoint.  So 
	\begin{equation*}
		\sum_{i=1}^{m}{|S_{i}|} \le m+1.
	\end{equation*}
	Let $c_{i}$ be the number of proper subsets of $S_{i}$ that belong to $F$.  Then $|F|=k+\sum_{i=1}^{k}{c_{i}}$.  Notice that $c_{i}$ is also the cardinality of a tree topology on $S_{i}$, so by the induction hypothesis, $c_{i}\le |S_{i}|-2$.  Hence
	\begin{equation}\label{eq:transition}
	|F| = k+\sum_{i=1}^{k}{c_{i}} \le \sum_{i=1}^{m}{|S_{i}|} - k \le (m+1)-2=N-2.
	\end{equation}
	Lemma \ref{lem:dimub} thus holds for $N=m+1$.  
	\end{enumerate}
This concludes the transition step, and the proof.
\end{proof}

Conversely, now, we consider the minimal clades of a tree topology $F$.

\begin{lemma}\label{lem:minelement}
Let $F$ be a tree topology on $[N]$.  For any pair $p\in p_{N}$, let $F(p)=\{S\in F\mid S\supseteq p\}$.  For $p\in p_{N}$, if $F(p)\ne \emptyset$, then the intersection of all clades $S\in F(p)$ is also an element of $F(p)$.
\end{lemma}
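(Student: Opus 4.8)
The plan is to show that $F(p)$ is totally ordered by inclusion, i.e., forms a chain, and then to identify the desired intersection with the minimum element of this chain. The entire argument hinges on the single observation that every clade in $F(p)$ contains the $2$-element set $p$, so no two of them can be disjoint; the nested set conditions (\ref{eq:nested}) then force comparability.

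First I would take two arbitrary distinct clades $S_1, S_2 \in F(p)$. Since $p \subseteq S_1$ and $p \subseteq S_2$ with $|p| = 2$, we have $\emptyset \neq p \subseteq S_1 \cap S_2$, so the third nested set condition $S_1 \cap S_2 = \emptyset$ is ruled out. By Definition \ref{def:treetopology}, exactly one of the three conditions in (\ref{eq:nested}) holds, hence either $S_1 \subsetneq S_2$ or $S_2 \subsetneq S_1$. Thus any two distinct elements of $F(p)$ are comparable under inclusion, which means $F(p)$ is a chain with respect to $\subseteq$.

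Next I would invoke finiteness: because $F$ is a finite collection and $F(p) \neq \emptyset$ by hypothesis, the chain $F(p)$ is a finite nonempty totally ordered set and therefore possesses a minimum element $S_{\min} \in F(p)$. Since $S_{\min} \subseteq S$ for every $S \in F(p)$, we get $S_{\min} \subseteq \bigcap_{S \in F(p)} S$; conversely, $S_{\min}$ is itself one of the terms of the intersection, so $\bigcap_{S \in F(p)} S \subseteq S_{\min}$. The two inclusions give $\bigcap_{S \in F(p)} S = S_{\min}$, and since $S_{\min} \in F(p)$, the intersection belongs to $F(p)$, as claimed.

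I do not anticipate a genuine obstacle here; the only substantive point is recognizing that the nested set structure collapses to a total order once disjointness is excluded. The mild care needed is to ensure the intersection over a possibly large family is handled correctly, but finiteness of $F$ makes this immediate and lets the minimum of the chain do all the work.
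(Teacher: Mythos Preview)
Your proof is correct and follows essentially the same approach as the paper: both argue that any two clades in $F(p)$ share the nonempty subset $p$, so disjointness is excluded and the nested set conditions force $F(p)$ to be a chain, whose (necessarily existing, by finiteness) minimum element equals the intersection. Your version is slightly more explicit in verifying the double inclusion $S_{\min} = \bigcap_{S \in F(p)} S$, but the substance is identical.
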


\begin{proof}
	Suppose $p\in p_{N}$ and $F(p)\ne \emptyset$. For any two clades $S_{1},S_{2}\in F(p)$, since $p\subseteq S_{1},S_{2}$, then $S_{1}$ and $S_{2}$ cannot be disjoint.  Thus, by the nested set condition in Definition \ref{def:treetopology}, either $S_{1}$ contains $S_{2}$ or vice versa.  This means that all clades in $F(p)$ form a completely ordered set with respect to set inclusion.  Since $F(p)$ is finite, it has a minimal element that must be contained in all other elements.  This minimal element is the intersection of all clades in $F(p)$.
\end{proof}

\begin{definition}\label{def:minelement}
The minimal element that is the intersection of all clades $S \in F(p)$ given in Lemma \ref{lem:minelement} is called the {\em closure} of $p$ in $F$.  We denote this closure by $\mathrm{cl}_{F}(p)$.  If $F(p)=\emptyset$, we set $\mathrm{cl}_{F}(p)$ to be $[N]$.
\end{definition}

\begin{example}
Let $F=\{1,2,3,4,5,6\}$ and $S=\big\{ \{1,2,3\}, \{1,2\}, \{1\}, \{5,6\} \big\}$.  Then $\mathrm{cl}_F(2) = \{1,2 \}$, $\mathrm{cl}_F(5) = \{5,6\}$, and $\mathrm{cl}_F(4) = F.$
\end{example}

Lemma \ref{lem:minelement} intuitively gives us the result that $\mathrm{cl}_{F}(p)$ is the minimal subset in $F \cup [N]$ that contains $p$.\\

Given the definition of a tree topology $F$, and notions of maximal and minimal clades of $F$, we now proceed to study the behavior of varying tree topologies.  We shall construct the setting for such a study via the definitions of an equivalence relation and a partial order on a tree topology $F$ in terms of pairs $p \in p_{N}$ as follows.

\begin{definition}\label{def:po}
Let $F$ be a tree topology on $[N]$. We define an equivalence relation $=_{F}$ on $p_N$ by
$$
p_1 =_F p_2 \mbox{~~if~~} \mathrm{cl}_{F}(p_1) = \mathrm{cl}_{F}(p_2),
$$
and a partial order $<_{F}$ on $p_{N}$ by
$$
p_{1} <_{F} p_{2} \mbox{~~if~~} \mathrm{cl}_{F}(p_1) \subsetneq \mathrm{cl}_{F}(p_2),
$$
for all pairs $p_1, p_2 \in p_N$.
\end{definition}


\begin{lemma}\label{lem:comparable}
Let $F$ be a tree topology on $[N]$.  For any distinct elements $i,j,k\in [N]$, exactly one of the following holds:
\begin{align*}
\{i,j\} & =_{F} \{i,k\},\\
\{i,j\} & <_{F} \{i,k\},\\
\{i,k\} & <_{F} \{i,j\}.
\end{align*}	
\end{lemma}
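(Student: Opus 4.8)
The plan is to translate the statement, via Definition \ref{def:po}, into a single comparability claim about the two sets $A := \mathrm{cl}_F(\{i,j\})$ and $B := \mathrm{cl}_F(\{i,k\})$. Under the equivalence relation $=_F$ and the partial order $<_F$, the three displayed alternatives read exactly as $A = B$, $A \subsetneq B$, and $B \subsetneq A$. These three relations between two sets are pairwise mutually exclusive, so at most one of them can hold; the entire content of the lemma therefore reduces to showing that $A$ and $B$ are comparable under inclusion, i.e., that at least one of $A = B$, $A \subsetneq B$, $B \subsetneq A$ is true.

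First I would note that both $A$ and $B$ contain the element $i$: by Lemma \ref{lem:minelement} and Definition \ref{def:minelement}, $\mathrm{cl}_F(p)$ always contains $p$, so $\{i,j\} \subseteq A$ and $\{i,k\} \subseteq B$. Consequently $i \in A \cap B$, so the two closures are \emph{not} disjoint. Next I would recall that, by Definition \ref{def:minelement}, each of $A$ and $B$ is either a clade of $F$ or the whole ground set $[N]$, the latter case arising exactly when the corresponding set $F(p)$ is empty. This sets up a short case analysis.

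If both $A$ and $B$ are clades of $F$, then either they coincide (giving $A = B$) or they are distinct, in which case the nested set conditions (\ref{eq:nested}) of Definition \ref{def:treetopology} leave only proper containment or disjointness; since $A \cap B \ne \emptyset$, one must properly contain the other. If exactly one of them, say $A = [N]$ while $B = S$ is a genuine clade of $F$, then $|S| \le N-1 < N$ forces $S \subsetneq [N]$, i.e. $B \subsetneq A$. Finally, if $A = B = [N]$ they are equal. In every case at least one of the three relations holds, and combined with the mutual exclusivity observed above this yields exactly one. The one subtlety to flag is that $[N]$ itself is never a clade (clades obey $|S| \le N-1$), which is precisely what guarantees strict containment in the mixed case; beyond that, the argument is a direct application of the nested set conditions, so I anticipate no serious obstacle.
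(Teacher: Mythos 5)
Your proof is correct, but it follows a genuinely different route from the paper's. The paper argues by contradiction at the level of the families $F(\{i,j\})$ and $F(\{i,k\})$: if none of the three relations held, then neither family would contain the other, so one could pick witnesses $S_1 \in F(\{i,j\}) \setminus F(\{i,k\})$ and $S_2 \in F(\{i,k\}) \setminus F(\{i,j\})$; since $i \in S_1 \cap S_2$, $j \in S_1 \setminus S_2$, and $k \in S_2 \setminus S_1$, the clades $S_1, S_2$ violate the nested set conditions (\ref{eq:nested}). You instead argue directly at the level of the closures $A = \mathrm{cl}_F(\{i,j\})$ and $B = \mathrm{cl}_F(\{i,k\})$, using Lemma \ref{lem:minelement} to know that each is either a clade of $F$ or all of $[N]$, observing $i \in A \cap B$, and applying the nested set trichotomy to $A$ and $B$ themselves. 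Both arguments hinge on the same combinatorial fact---two clades sharing the element $i$ cannot be disjoint, hence must be nested---but your version buys a few things: it is direct rather than by contradiction; it makes explicit the mutual-exclusivity half of ``exactly one,'' which the paper leaves implicit; and it sidesteps a step the paper glosses over, namely the inference from ``none of the three closure relations holds'' to ``the families $F(\{i,j\})$ and $F(\{i,k\})$ do not contain each other,'' which itself requires justifying how inclusion of families transfers to inclusion of closures (with care for empty families). What the paper's version buys in exchange is that it never needs your case analysis distinguishing genuine clades from the degenerate value $[N]$; the witnesses $S_1, S_2$ are honest clades by construction. Your flagged subtlety---that $[N]$ is never itself a clade, so the mixed case gives strict containment---is exactly right and is what keeps your case analysis airtight.
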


\begin{proof}
Suppose for contradiction that Lemma \ref{lem:comparable} does not hold for some distinct $i,j,k$.  Then $F(\{i,j\})$ and $F(\{i,k\} )$ do not contain each other and there exist $S_{1},S_{2}\in F$ such that $S_{1} \in F(\{i,j\}) \backslash F(\{i,k\})$ and $S_{2} \in F(\{i,k\}) \backslash F(\{i,j\})$.  Thus $j\in S_{1} \backslash S_{2}$ and $k\in S_{2} \backslash S_{1}$, contradicting that $F$ is a nested set.  Hence Lemma \ref{lem:comparable} holds.
\end{proof}

Given this framework to compare pairs of elements, we now give two notions of bipartitioning of trees that are closely related.  It turns out, as we will see in Theorem \ref{thm:fulldim}, in the context of tree topologies, they coincide.  

\begin{definition}\label{def:binary}
	A rooted phylogenetic tree $T$ is said to be {\em binary} if every vertex of $T$ is either a leaf or trivalent.
\end{definition}

\begin{definition}\label{def:bifur}
	Let $F$ be a tree topology on $[N]$ and $\bar{F}= \big\{S\in F\mid |S|\ge 3 \big\} \cup [N]$.  $F$ is said to be {\em bifurcated} if for every $S\in \bar{F}$, exactly one of the following holds:
	\begin{enumerate}[(a)]
		\item there exists a proper subset $S' \subset S$ such that $S'\in F$ and $|S'|=|S|-1$; or
		\item there exist two proper subsets $S',S'' \subset S$ such that $S',S''\in F$ and $S'\cup S''=S$.
	\end{enumerate}
	Note that in (b), we must have that $S'\cap S''=\emptyset$.
\end{definition}

A binary tree is a bifurcating tree that has exactly two descendants stemming from each interior node.

\begin{theorem}\label{thm:fulldim}
	Let $F$ be a tree topology on $[N]$.  The following are equivalent:
	\begin{enumerate}[(1)]
		\item $F$ is full dimensional;
		\item for $1\le i<j<k \le N$, two of the pairs $\{i,j\},\{i,k\},\{j,k\}$ are $=_{F}$, and the third pair is $<_{F}$ than the other two $F$-equivalent pairs;
		\item $F$ is bifurcated;
		\item every phylogenetic tree with tree topology $F$ is binary.
	\end{enumerate}
\end{theorem}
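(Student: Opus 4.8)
The plan is to prove the cycle $(1)\Rightarrow(3)\Rightarrow(4)\Rightarrow(1)$ together with the separate equivalence $(2)\Leftrightarrow(3)$, everything organized around one combinatorial device. For $S\in\bar F$, I call a subset $C\subseteq S$ a \emph{child} of $S$ if it is maximal, under inclusion, in the family $\{T\in F\mid T\subsetneq S\}\cup\{\{x\}\mid x\in S\}$. By the nested set conditions (\ref{eq:nested}), the children of $S$ partition $S$, each child is either a clade of $F$ of size at least two or a single leaf, and $S$ has at least two children. The one auxiliary fact I would record first is that every clade $T\subsetneq S$ lies inside a single child of $S$ (maximality plus nestedness); combined with Definition \ref{def:minelement}, this gives the working translation of the closure operator: $\mathrm{cl}_F(\{i,j\})=S$ precisely when $i$ and $j$ lie in two distinct children of $S$.

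Next I would sharpen Lemma \ref{lem:comparable} into a dichotomy for triples. Given distinct $i,j,k$, let $S$ be the smallest member of $F\cup\{[N]\}$ containing all three. Either the three elements occupy three distinct children of $S$, whence $\{i,j\}=_F\{i,k\}=_F\{j,k\}$ with all closures equal to $S$; or exactly two of them share a child, whence the two pairs straddling the shared child and its complement are $=_F$ and equal $S$ while the remaining pair is $<_F$. Condition $(2)$ is thus exactly the assertion that the first alternative never occurs. This yields $(2)\Leftrightarrow(3)$: for $|S|\ge 3$ the clauses (a) and (b) of Definition \ref{def:bifur} are precisely the two ways $S$ can have exactly two children (one clade plus one leaf, or two clades), so $F$ bifurcated means every $S\in\bar F$ has exactly two children and no triple can occupy three children; conversely, if $F$ is not bifurcated some $S\in\bar F$ satisfies neither (a) nor (b), hence has three or more children, and picking one representative from each of three of them produces a triple with all pairs $=_F$, violating $(2)$.

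For $(1)\Rightarrow(3)$ I use the extremality of full dimensionality against Lemma \ref{lem:dimub}. If $F$ is full dimensional but some $S\in\bar F$ has children $C_1,\dots,C_r$ with $r\ge 3$, then $C_1\cup C_2$ satisfies $2\le|C_1\cup C_2|\le N-1$, is not already in $F$ (by maximality of $C_1$ and $C_2$), and is nested with every clade of $F$; this last point is the only case check, run over $T\subseteq$ a child, $T\supseteq S$, $T$ disjoint from $S$, and $T\subsetneq S$ (which sits inside one child). Then $F\cup\{C_1\cup C_2\}$ is a tree topology with $N-1$ clades, contradicting Lemma \ref{lem:dimub}; so every $S\in\bar F$ has exactly two children, i.e.\ $F$ is bifurcated. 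For $(3)\Rightarrow(4)$ I pass to the tree realizing $F$: its non-root internal vertices are the clades of $F$, its root is $[N]$, and the parent of any node is its smallest strictly containing clade, so the down-degree of the vertex $S$ equals its number of children. Bifurcation makes this two for every $S\in\bar F$, and it is automatically two for every size-two clade, so every internal vertex is trivalent (the root under the leaf-$0$ convention of Remark \ref{rem:root}) and the tree is binary. Finally $(4)\Rightarrow(1)$ is the standard count: a rooted binary tree on $N$ leaves has $N-1$ internal vertices, one being the root, so its $N-2$ nontrivial clades give $|F|=N-2$.

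The main obstacle is making the tree--clade correspondence precise enough to license $(3)\Rightarrow(4)$ and $(4)\Rightarrow(1)$: one must confirm that every tree topology $F$ (a nested family) is realized by a genuine phylogenetic tree, that all trees realizing $F$ share one shape and differ only in their internal edge lengths---so that ``every phylogenetic tree with tree topology $F$'' is well posed and non-vacuous---and that the graph-theoretic degree of a vertex matches the combinatorial child count above. The secondary bookkeeping is the identification, for $|S|\ge 3$, of ``exactly two children'' with the alternatives (a)/(b) of Definition \ref{def:bifur}, and the nestedness check in the unioning step; both are short but must be carried out case by case. I would finally treat the degenerate cases $N=2,3$ separately, where $F$ is empty or a single pair and the bifurcation condition on $[N]$ is vacuous or immediate.
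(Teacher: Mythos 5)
Your proposal is correct, but it is organized quite differently from the paper's proof. The paper establishes the pairwise equivalences (3)$\Leftrightarrow$(2), (1)$\Leftrightarrow$(3), and (3)$\Leftrightarrow$(4), and both directions of (1)$\Leftrightarrow$(3) are inductions on $N$ that split according to the number of maximal clades, reusing the case analysis from the proof of Lemma \ref{lem:dimub}. You instead prove the cycle (1)$\Rightarrow$(3)$\Rightarrow$(4)$\Rightarrow$(1) together with (2)$\Leftrightarrow$(3), all built on a single device: the ``children'' of $S\in\bar{F}$ (maximal proper sub-clades or singletons), which partition $S$ and translate the closure operator as $\mathrm{cl}_F(p)=S$ exactly when the two entries of $p$ lie in distinct children of $S$. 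This buys you three things the paper does not have: an induction-free proof of (1)$\Rightarrow$(3), where adjoining $C_1\cup C_2$ to a non-bifurcated full-dimensional $F$ would yield a tree topology with $N-1$ clades and contradict Lemma \ref{lem:dimub} (a genuinely slicker argument, exploiting full dimensionality as maximality); the implication (3)$\Rightarrow$(1) essentially for free from the count of non-root internal vertices of a binary tree; and a cleaner route to (2)$\Leftrightarrow$(3) via the triple dichotomy (three children occupied versus two), which the paper obtains through a longer case analysis with the partial order. The trade-off is that your path from (3) back to (1) runs through actual trees, so it leans on realizability of every nested family by a phylogenetic tree and on rigidity of shape (all realizations of $F$ share one shape), facts you rightly flag as the main obstacle; the paper needs the clade--edge correspondence only for (3)$\Leftrightarrow$(4) and keeps (1)$\Leftrightarrow$(3) purely combinatorial, though its own treatment of that correspondence (``this edge corresponds to a clade $S$ in $F$'') is no more formal than yours. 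One small correction: for $N=2$ the degenerate case is not ``vacuous or immediate'' --- the empty topology is full dimensional yet fails Definition \ref{def:bifur} for $[N]$, so the theorem implicitly assumes $N\ge 3$, a convention the paper also adopts silently by starting its inductions at $N=3$.
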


\begin{proof}
	(3) $\Rightarrow$ (2): Suppose $F$ is bifurcated and consider distinct elements $i,j,k \in [N]$.  By Lemma \ref{lem:comparable}, any two of $\{i,j\},\{i,k\},\{j,k\}$ are comparable with respect to $=_{F}$ or $<_{F}$.  If the three pairs are all $=_{F}$, then by Definition \ref{def:po}, their closures in $F$ are equal.  Let this closure be $S\in F \cup \{[N]\}$, then $i,j,k\in S$ and $|S|\ge 3$, thus $S\in \bar{F}$.  Since $F$ is bifurcated, condition (a) or (b) in Definition \ref{def:bifur} holds for $S$.  If (a) holds, then there exists $S'\in F$ such that $S'$ is a proper subset of $S$ with $|S'|=|S|-1$.  In this case, at least two of $i,j,k$ belong to $S'$, and the closure of the pair formed by these elements is contained in $S'$---a contradiction.  So we may assume $\{i,k\} <_{F} \{i,j\}$.  
	
	We now need to show $F$-equivalence between $\{i,j\}$ and $\{j,k\}$: by Definition \ref{def:po}, there exists $S_{1}\in F$ such that $\{i,k\} \subseteq S_{1}$ but $\{i,j\} \not \subseteq S_{1}$.  Then $i,k\in S_{1}$ and $j\notin S_{1}$.  Now for any $S_{2}\in F$, if $\{j,k\} \subseteq S_{2}$, then $S_{2}$ and $S_{1}$ are not disjoint.  Since $j\in S_{2} \backslash S_{1}$, we must have $S_{1} \subseteq S_{2}$.  Then $i\in S_{2}$, and $\{i,k\} \subseteq S_{2}$.  Since $\{j,k\} \not \subseteq S_{1}$, by definition, $\{i,k\} <_{F} \{j,k\}$.  In addition, if an element of $F$ is a superset of $\{i,j\}$, then it also contains $k$ and thus is also a superset of $\{j,k\}$.  Conversely, being a superset of $\{j,k\}$ implies that it also contains $i$, and thus is also a superset of $\{i,j\}$.  Therefore $\{i,j\} =_{F} \{j,k\}$, and (2) holds.\\
	
	(2) $\Rightarrow$ (3): Suppose (2) holds for $F$.  We will show that $F$ is bifurcated: for any subset $S\in \bar{F}$, consider all maximal proper subsets $M_{1},\ldots,M_{m}$ of $S$ that are clades in $F$.  For any two such maximal subsets, since neither can be a subset of the other by definition, they must be disjoint.  If $m\ge 3$, we can choose $i,j,k\in [N]$ from $M_{1},M_{2},M_{3}$ respectively.  Then $\mathrm{cl}_{F}\big(\{i,j\}\big)=\mathrm{cl}_{F}\big(\{i,k\} \big)=\mathrm{cl}_{F}\big(\{j,k\} \big)=S$ and thus $\{i,j\} =_{F} \{i,k\} =_{F} \{j,k\}$---a contradiction.  Therefore, $m$ must be either $1$ or $2$.  
	
	If $m=1$, then $2\le |M_{1}|\le |S|-1$.  Suppose $|M_{1}|\le |S|-2$, we can choose two elements $i,j\in S \backslash M_{1}$ and another element $k\in M_{1}$.  Then we also have $\mathrm{cl}_{F}(\{i,j\})=\mathrm{cl}_{F}(\{i,k\})=\mathrm{cl}_{F}(\{j,k\})=S$ and $\{i,j\} =_{F} \{i,k\} =_{F} \{j,k\}$---a contradiction.  Hence $|M_{1}|=|S|-1$ and condition (a) holds.  If $m=2$, we already have disjoint $M_{1}, M_{2}\in F$ that are proper subsets of $S$.  If $M_{1}\cup M_{2} \ne S$, we may choose $i\in S \backslash (M_{1}\cup M_{2})$ and $j\in M_{1},k\in M_{2}$.  Then $\mathrm{cl}_{F}(\{i,j\})=\mathrm{cl}_{F}(\{i,k\})=\mathrm{cl}_{F}(\{j,k\})=S$ and $\{i,j\} =_{F} \{i,k\} =_{F} \{j,k\}$---a contradiction.  Hence $M_{1}\cup M_{2} = S$ and condition (b) holds.  Hence $F$ is bifurcated.\\
	
	(1) $\Rightarrow$ (3): We proceed by induction on $N$.  When $N=3$ and $F$ is full dimensional, then $|F|=1$ and $F$ consists of one $2$-element subset.  Thus, condition (a) holds for $\{1,2,3\}$ and $F$ is bifurcated.  
	
	Suppose (1) $\Rightarrow$ (3) holds for eligible integers less than $N$.  Let $F$ be a full-dimensional nested set on $[N]$, then $|F|=N-2$.  Consider the maximal elements $S_{1},\ldots,S_{k}\in F$, $k\ge 1$, with respect to set inclusion.  Then by the case (ii) in the proof of Lemma \ref{lem:dimub}, $|F|\le N-k$.  Hence $k\le 2$.  
	
	If $k=1$, there exists a unique maximal element $S\in F$ with $|S|\le N-1$.  Then $F \backslash \{S\}$ is a nested set on the ground set $S$, and $N-3=|F \backslash \{S\}|\le |S|-2\le N-1-2=N-3$.  So $|S|=N-1$, and condition (a) holds for $[N]$.  In addition, $|F \backslash \{S\}| = |S|-2$, so $F \backslash \{S\}$ is full dimensional.  By the induction hypothesis, all elements in $\bar{F} \backslash \{S\}$ satisfy either condition (a) or (b).  Note that $\bar{F}=\bar{F} \backslash \{S\} \cup \{[N]\}$, so (3) also holds for $F$.  
	
	If $k=2$, all equalities hold in (\ref{eq:transition}), so there are two maximal elements $S_{1},S_{2}\in F$ with $S_{1}\cap S_{2}=\emptyset$ and $|S_{1}|+|S_{2}|=N$.  So condition (b) holds for $[N]$.  Let $F_{i}$ be the set of the proper subsets of $S_{i}$ that belong to $F$ for $i=1,2$.  Then both $F_{i}$ are full-dimensional tree topologies on their respective ground sets $S_{i}$.  By the induction hypothesis, both $F_{i}$ are bifurcated and all elements in $\bar{F}_{i}$ satisfy either condition (a) or (b).  Note that $\bar{F}=F \cup \{[N]\}=\bar{F}_{1}\cup \bar{F}_{2} \cup \{[N]\}$, hence $F$ is also bifurcated, which completes the transition step.\\
	
	(3) $\Rightarrow$ (1): We proceed by induction on $N$.  When $N=3$ and $F$ is bifurcated, then condition (a) holds for $\{1,2,3\}$ and $F$ only contains one $2$-element subset, so $F$ is full dimensional. 
	
	Suppose (3) $\Rightarrow$ (1) holds for all eligible integers less than $N$.  For any bifurcated nested set $F$ on $[N]$, either condition (a) or (b) holds for the set $[N]$.  If (a) holds, then there exists $S\in F$ such that $|S|=N-1$.  Then all elements in $F \backslash \{S\}$ are proper subsets of $S$ and they thus form a nested set on the ground set $S$.  This nested set is also bifurcated, by the induction hypothesis, so it is full dimensional.  So $|F \backslash \{S\}|=|S|-1=N-2$ and $|F|=|F \backslash \{S\}|+1=N-1$, $F$ is full dimensional.  
	
	If condition (b) holds, then there exist disjoint $S_{1},S_{2}\in F$ such that $S_{1} \cup S_{2} = [N]$.  Let $F_{i}$ be the elements of $F$ that are proper subsets of $S_{i}$ for $i=1,2$. Then $F=F_{1} \cup F_{2} \cup \{S_{1},S_{2}\}$.  Each $F_{i}$ is a nested set on the ground set $S_{i}$ (and may be empty when $|S_{i}|=2$); it is still bifurcated.  By the induction hypothesis, $F_{i}$ is full dimensional and $|F_{i}|=|S_{i}|-2$.  Then $|F|=(|S_{1}|-2)+(|S_{2}|-2)+2=N-2$, so $F$ is still full dimensional. This completes the transition step.\\
	
	(3) $\Rightarrow$ (4): Suppose $F$ is bifurcated and a rooted phylogenetic tree $T$ has tree topology $F$.  Let $v$ be a non-leaf node.  It suffices to show that $v$ has degree $3$. We consider two cases: 
	
	(i) Suppose $v$ is not the root of $T$.  Then there is a unique path from the root of $T$ to $v$.  Along the path, there is an edge connecting $v$, and this edge corresponds to a clade $S$ in $F$.  Since $F$ is bifurcated, $S$ satisfies either conditions in Definition \ref{def:binary}.  If there exists a proper subset $S'$ such that $S'$ is also a clade of $F$ and $|S'|=|S|-1$, then all other edges connecting $v$ include one edge connecting to the leaf in $S \backslash S'$ and one edge corresponding to $S'$.  Otherwise there exist clades $S',S''$ of $F$ such that $S'\cup S''=S$, then all other edges connecting $v$ include the two edges corresponding to $S'$ and $S''$.  In either case, $v$ is trivalent.
	
	(ii) Suppose $v$ is the root of $T$.  Then $v$ is connected to the virtual leaf $0$.  Since $[N]\in \bar{F}$, $[N]$ satisfies either conditions in Definition \ref{def:binary} and $v$ is connecting to two other edges in either case, so $v$ is also trivalent.\\
	
	(4) $\Rightarrow$ (3): Suppose a binary rooted tree $T$ has tree topology $F$.  For any clade $S\in \bar{F}$, $S$ corresponds to an edge $e$ of $T$.  Let $v$ be the vertex of $e$ with greater distance to the root of $T$.  Since $T$ is binary, $v$ is trivalent and connects to other two edges $e'$ and $e''$.  Each leaf in $S$ has a unique path to $v$, which must contain $e'$ or $e''$.  This admits a partition of $S$ into two nonempty subsets.  If both subsets have cardinality of at least $2$, then they are both clades of $F$, and we have $S'\cap S''=S$.  Otherwise one of them is a singleton, say $S''$, and thus $S'$ is a clade of $F$ with $|S'|=|S|-1$.  Thus $S$ satisfies the condition in Definition \ref{def:binary}.  For $[N]\in \bar{F}$, since $T$ is binary, the root of $T$ is also connected by two edges other than the edge to the virtual leaf.  The reasoning above applies to $[N]$, and $F$ is bifurcated.
\end{proof}

These concepts of bipartitioning in terms of tree topologies are important in understanding the combinatorial aspects of tree ultrametrics.  In addition to the fact that tree ultrametrics are equidistant trees, it is also true that every point along any tropical line segment between two equidistant trees is also itself an equidistant tree.  The equivalence relation $=_F$ and partial order $<_F$ completely define the set of all ultrametrics for a given tree topology $F$, which we present and formalize in the following results.

\begin{proposition}\label{prop:conedefn}
	Given a tree topology $F$ on $[N]$, let $ut(F)$ be the set of all ultrametrics in $\mathcal{U}_{N}$ corresponding to a tree with tree topology $F$. Then 	
	\begin{equation}\label{eq:w-po}
		ut(F) = \big\{(w_{p})_{p\in p_{N}}\in \mathcal{U}_{N} \mid w_{p_{1}}=w_{p_{2}} \text{ if } p_{1} =_{F} p_{2} \text{ and } w_{p_{1}} < w_{p_{2}} \text{ if } p_{1} <_{F} p_{2} \big\}.
	\end{equation}
\end{proposition}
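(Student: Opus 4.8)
The plan is to prove the two set inclusions separately. Write $ut(F)$ for the set of ultrametrics whose associated equidistant tree (Proposition \ref{prop:ultra}) has clade set exactly $F$, and $A_F$ for the right-hand side of (\ref{eq:w-po}). Throughout I will use two facts about the equidistant tree realizing an ultrametric $w\in\mathcal{U}_N$: first, the standard ultrametric--tree correspondence, namely that a subset $S$ with $2\le|S|\le N-1$ is a clade of this tree if and only if $\max_{a,b\in S}w_{ab}<\min_{a\in S,\,c\notin S}w_{ac}$ (so that ``tree topology $F$'' means precisely that this strict separation holds for every $S\in F$ and fails for every other $S$); and second, derivable from the three-point condition together with the first fact, that $w_{ij}=\max_{a,b\in\mathrm{cl}_F(\{i,j\})}w_{ab}$, i.e. $w_{ij}$ is the diameter of the smallest clade containing $i$ and $j$.

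For the inclusion $ut(F)\subseteq A_F$, suppose $w\in ut(F)$. If $p_1=_Fp_2$ then $\mathrm{cl}_F(p_1)=\mathrm{cl}_F(p_2)$, and the diameter formula gives $w_{p_1}=w_{p_2}$ at once. If $p_1<_Fp_2$, write $S_1=\mathrm{cl}_F(p_1)\subsetneq S_2=\mathrm{cl}_F(p_2)$; since $S_1\ne[N]$ it is an element of $F$, hence a genuine clade, so the separation inequality applies to $S_1$. Choosing any $c\in S_2\setminus S_1$ and $a\in S_1$ yields $w_{ac}\ge\min_{a'\in S_1,\,c'\notin S_1}w_{a'c'}>\max_{x,y\in S_1}w_{xy}$, while $a,c\in S_2$ forces $w_{ac}\le\max_{x,y\in S_2}w_{xy}$; combined with the diameter formula this gives $w_{p_1}<w_{p_2}$. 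The crucial point is that the strict inequality is exactly a restatement of the strictness in the clade-separation property, which holds precisely because all clades of $F$ are genuinely present (equivalently, the internal edges have positive length).

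For the reverse inclusion $A_F\subseteq ut(F)$, take $w\in A_F$ and show its clade set equals $F$. To see $F$ is contained in the clade set, fix $S\in F$; for any $i,j\in S$ we have $\mathrm{cl}_F(\{i,j\})\subseteq S$, while for any $i'\in S$ and $k\notin S$ the nested set condition forces every clade containing $\{i',k\}$ to contain $S$ properly, so $\mathrm{cl}_F(\{i',k\})\supsetneq S$; hence every within-$S$ pair is strictly below every crossing pair under $<_F$, and the defining constraints of $A_F$ give $\max_{i,j\in S}w_{ij}<\min_{i'\in S,\,k\notin S}w_{i'k}$, so $S$ is a clade. Conversely, to rule out extra clades, let $S'$ be any clade of the tree of $w$ and pick a diameter-achieving pair $\{a,b\}\subseteq S'$. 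Using the trichotomy of Lemma \ref{lem:comparable} for pairs sharing the vertex $a$, together with the separation inequality $\max_{x,y\in S'}w_{xy}<\min_{x\in S',\,c\notin S'}w_{xc}$, I will show that $\mathrm{cl}_F(\{a,b\})$ can neither meet $[N]\setminus S'$ (else some $w_{ak}>w_{ab}$ would contradict $\{a,k\}\le_F\{a,b\}$) nor omit any $m\in S'$ (else $w_{am}\le w_{ab}$ forces $\mathrm{cl}_F(\{a,m\})\subseteq\mathrm{cl}_F(\{a,b\})$, placing $m$ inside after all); thus $\mathrm{cl}_F(\{a,b\})=S'$, and since $S'\ne[N]$ this gives $S'\in F$.

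I expect the reverse inclusion to be the main obstacle, and within it the step identifying $\mathrm{cl}_F(\{a,b\})$ with $S'$ for a diameter-achieving pair: this is where the purely order-theoretic data $=_F,\,<_F$ must be converted back into the exact combinatorial clade, and it is the point that genuinely needs Lemma \ref{lem:comparable} and the strictness of the separation inequality rather than mere monotonicity. The forward inclusion, by contrast, is essentially the standard monotonicity of pairwise distances in the depth of the most recent common ancestor, repackaged through the closure operator.
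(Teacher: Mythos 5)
Your proof is correct, but it takes a genuinely different route from the paper's. You reduce everything to two metric facts about the equidistant tree realizing an ultrametric: the separation characterization of clades, $\max_{a,b\in S}w_{ab}<\min_{a\in S,\,c\notin S}w_{ac}$, and the diameter formula $w_{p}=\max_{a,b\in \mathrm{cl}_{F}(p)}w_{ab}$; granting these, both inclusions become order-theoretic manipulations, and Lemma \ref{lem:comparable} does exactly the work you say it does (I checked the reverse inclusion in detail: the diameter-achieving pair argument is sound, your first claim also rules out $\mathrm{cl}_{F}(\{a,b\})=[N]$ since $S'\ne[N]$, so the closure is a genuine element of $F$). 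The paper argues differently: it parametrizes a tree with topology $F$ by its edge lengths and derives the explicit formula $w_{\{i,j\}}=2h-2\sum_{S\in F(\{i,j\})}\ell(S)$; the forward inclusion then follows from positivity of the internal edge lengths (since $F(p)$ is determined by $\mathrm{cl}_{F}(p)$), and the reverse inclusion is obtained \emph{constructively}, by solving the linear system $w_{\{i,j\}}=2h-2\sum_{S\in F(\{i,j\})}x_{S}$ with all $x_{S}>0$ and choosing $h$ large enough to make the pendant edge lengths positive. The trade-off is this: the paper's argument is self-contained and exhibits the tree realizing $w$ explicitly, whereas yours is combinatorially cleaner but outsources the analytic content to your two cited facts---in particular the implication ``separation inequality $\Rightarrow$ clade,'' which is the standard ultrametric/dendrogram correspondence but is nowhere stated in the paper, and which the paper's edge-length computation effectively proves from scratch. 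Your proof is complete exactly to the extent that this correspondence may be cited; if it cannot, you would need to prove it (e.g., by showing via the three-point condition that the sets $\{x\in[N] : w_{ix}\le r\}$ satisfy the separation inequality and exhaust the clades), which is essentially the ball argument you already allude to when deriving your second fact from your first.
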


\begin{proof}
	Fix a tree topology $F$ on $[N]$ and a corresponding equidistant tree $T$.  For each internal edge of $T$ indexed by a clade $S$, let $\ell(S)>0$ be its length.  For the external edge of $T$, connecting the $i$th leaf to the root of $T$, let $\ell_{i}$ be its length.  Let $h$ be the height of $T$.  Then, by definition, the distance of each leaf to the root of $T$ is $h$.  There exists a unique path from the $i$th leaf to the root, consisting of the external edge connecting the $i$th leaf, and some internal edges: an internal edge indexed by $S$ appears on this path if and only if the $i$th leaf and the root are separated by the internal edge itself.  This necessarily means that $i\in S$.  Then 	
	\begin{equation}\label{eq:height}
		h = \ell_{i} + \sum_{i\in S}{\ell(S)}.
	\end{equation}
	
	Next we consider the path connecting the $i$th and $j$th leaves.  This path consists of the two external edges and some internal edges.  An internal edge indexed by $S$ appears on this path if and only if the $i$th leaf and the $j$th leaf are separated by the edge.  Equivalently, this means that $|\{i,j\}\cap S|=1$.  Hence
	\begin{equation*}\label{eq:path}
		\begin{split}
			w_{\{i,j\}}&=\ell_{i} + \ell_{j} \sum_{|\{i,j\}\cap S|=1}{\ell(S)} \\
					&=2h-\sum_{i\in S}{\ell(S)}-\sum_{j\in S}{\ell(S)} + \sum_{|\{i,j\}\cap S|=1}{\ell(S)} \\
					&=2h-2\sum_{i,j\in S}{\ell(S)} \\
					&=2h-2\sum_{S\in F(\{i,j\})}{\ell(S)}.
		\end{split}
	\end{equation*}
Therefore $w$ satisfies the condition defining the set (\ref{eq:w-po}).
	
	Conversely, suppose a vector $w\in \mathbb{R}^{n}/\mathbb{R}\one$ satisfies the conditions defining the set (\ref{eq:w-po}). Then the system of linear equations
$$
w_{\{i,j\}} = 2h - 2\sum_{S\in F(\{i,j\})}{x_{S}} \quad \forall \ \{i,j\}\in p_{N}
$$
has a solution such that $h\in \mathbb{R}$ and $x_{S}>0$ for all $S\in F$.  By a translation in $\mathbb{R}$, we may choose a sufficiently large $h$ such that all $\ell_{i}$ in (\ref{eq:height}) are positive.  Then $w$ is the ultrametric of an equidistant tree with external edge lengths $\ell_{i}$ and internal edge lengths $x_{S}$, whose tree topology is $F$.
\end{proof}

\begin{corollary}\label{cor:order}
	Let $F$ be a tree topology on $[N]$ and $w\in ut(F)$.  Then for any pairs $p,q\in p_{N}$, if $p\cap q \ne \emptyset$, then $w_{p}=w_{q}$ implies $p =_{F} q$ and $w_{p} < w_{q}$ implies $p <_{F} q$.
\end{corollary}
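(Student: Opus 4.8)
The plan is to recognize this corollary as the converse of the forward implications embedded in Proposition \ref{prop:conedefn}, and to obtain it by a trichotomy argument. The hypothesis $p \cap q \neq \emptyset$ is precisely what makes this work, because it is exactly the condition under which Lemma \ref{lem:comparable} applies and guarantees that $p$ and $q$ are comparable under the relations $=_F$ and $<_F$.

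First I would dispose of the trivial case $p = q$: here $w_p = w_q$ and $p =_F q$ both hold tautologically, and the hypothesis $w_p < w_q$ is vacuous. So I would assume $p \neq q$. Since $p \cap q \neq \emptyset$ and both pairs have cardinality $2$, they share exactly one element; write $p = \{i,j\}$ and $q = \{i,k\}$ with $i, j, k \in [N]$ distinct. Now Lemma \ref{lem:comparable} applies and tells us that exactly one of $p =_F q$, $p <_F q$, $q <_F p$ holds.

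The key step is to run each of these three mutually exclusive order-cases through Proposition \ref{prop:conedefn}, using the fact that $w \in ut(F)$. By the characterization (\ref{eq:w-po}), $p =_F q$ forces $w_p = w_q$, $p <_F q$ forces $w_p < w_q$, and $q <_F p$ forces $w_q < w_p$. Thus the three exhaustive order-cases map injectively onto the three exhaustive real-number comparisons $w_p = w_q$, $w_p < w_q$, $w_p > w_q$. Because both trichotomies are exclusive and exhaustive, the correspondence is in fact a bijection, and the desired reverse implications follow by elimination: if $w_p = w_q$, then neither $p <_F q$ nor $q <_F p$ can hold (each would force a strict inequality among the $w$-values), so by Lemma \ref{lem:comparable} we must have $p =_F q$; and if $w_p < w_q$, then $p =_F q$ is ruled out (it would force $w_p = w_q$) and $q <_F p$ is ruled out (it would force $w_p > w_q$), leaving $p <_F q$.

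I do not anticipate a substantive obstacle here, since the argument is a clean contrapositive/trichotomy once the reduction to a shared-element pair is in place. The one point requiring care—and worth stating explicitly—is the role of the hypothesis $p \cap q \neq \emptyset$: without a common element, $p$ and $q$ need not be comparable under $=_F$ or $<_F$ at all, so Lemma \ref{lem:comparable} would be unavailable and the trichotomy would break down. Everything else is an immediate appeal to Proposition \ref{prop:conedefn} together with the trichotomy of the real numbers.
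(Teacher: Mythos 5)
Your proposal is correct and follows essentially the same route as the paper's proof: dispose of the case $p=q$, reduce to $p=\{i,j\}$, $q=\{i,k\}$, invoke Lemma \ref{lem:comparable} for the trichotomy, push each case through Proposition \ref{prop:conedefn}, and conclude the converse implications by elimination. Your write-up is somewhat more explicit about why the exclusivity and exhaustiveness of both trichotomies yields the reverse implications, but the argument is the same.
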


\begin{proof}
	If $p=q$, then $w_{p}=w_{q}$ holds and $p =_{F} q$ also holds.  Otherwise we may assume $p=\{i,j\}$ and $q=\{i,k\}$.
	By Lemma \ref{lem:comparable}, one of the following relationships holds: $p <_{F} q,$ or $p =_{F} q,$ or $q<_{F} p$.  By Proposition \ref{prop:conedefn}, these three relationships imply $w_{p}<w_{q},$ or $w_{p}=w_{q},$ or $w_{p}>w_{q}$ respectively. Hence the converse implications also hold.
\end{proof}

So far, we have introduced means to study tree structures by studying subsets of leaves and iteratively dividing and comparing these subsets.  We have also determined the set of tree ultrametrics defined by the comparison framework set up in Definition \ref{def:po}.  Given this framework, we now determine when we have compatibility of sets in order to present our combinatorial result that gives all possible tree topologies along a tropical line segment.

We note that the geometric and combinatorial procedure we present is a natural approach that has been previously implemented in other tree settings and, more generally, in finite metric spaces, e.g., by \cite{BANDELT199247,DRESS1984321}.  Our approach differs in two important ways: firstly, we consider the infinite space of all sets of phylogenetic trees, and secondly, our study is fundamentally tropical, since we use the framework of tropical line segments.

\begin{definition}\label{def:ttcompatible}
	Let $F_{1}$ and $F_{2}$ be tree topologies on $[N]$.  Define the set of {\em compatible} tree topologies of $F_{1}, F_{2}$ to consist of tree topologies $F$ where there exist tree ultrametrics $w_{1}\in ut(F_{1})$ and $w_{2}\in ut(F_{2})$, such that the tree ultrametric $w_{1} \boxplus w_{2} \in ut(F)$.  We denote this set by $C(F_{1},F_{2})$.
\end{definition}

\begin{theorem}\label{thm:compatible}
Let $F_{1}, F_{2}, F$ be full-dimensional tree topologies on $[N]$.  If $F\in C(F_{1},F_{2})$, then any equivalence class $C\subseteq p_{N}$ with respect to $=_{F}$ is contained in an equivalence class with respect to either $=_{F_{1}}$ or $=_{F_{2}}$.  
	
Put differently, for each $S\in \bar{F}$, there exists either $S_{1}\in F_{1}$ such that for $p\in p_{N}$, if $\mathrm{cl}_{F}(p)=S$, then $\mathrm{cl}_{F_{1}}(p)=S_{1}$; or $S_{2}\in F_{2}$ such that for $p\in p_{N}$, if $\mathrm{cl}_{F}(p)=S$, then $\mathrm{cl}_{F_{2}}(p)=S_{2}$.
\end{theorem}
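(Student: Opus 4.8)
The plan is to reduce the statement to a purely combinatorial fact about two partitions of a leaf set. Fix $S \in \bar F$ and let $C = \{p \in p_N : \mathrm{cl}_F(p) = S\}$ be the corresponding $=_F$-class; the singleton classes coming from the $2$-element clades of $F$ are trivially contained in any class, so I may assume $|C|\ge 2$, i.e.\ $S\in\bar F$. Since $F$ is full dimensional, Theorem \ref{thm:fulldim} shows $F$ is bifurcated, so $S$ splits into its two children $A\sqcup B=S$ (one of $A,B$ possibly a single leaf, via case (a) of Definition \ref{def:bifur}), and one checks directly that $C=\{\{a,b\}:a\in A,\ b\in B\}$ is exactly the set of \emph{cross pairs} between $A$ and $B$; the case $S=[N]$ is handled identically using the two children of the root. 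Let $w_1\in ut(F_1)$ and $w_2\in ut(F_2)$ witness $F\in C(F_1,F_2)$, so $w:=w_1\boxplus w_2\in ut(F)$. By Proposition \ref{prop:conedefn} all pairs of $C$ share a common value $M:=w_p$, while every pair $q$ lying inside $A$ or inside $B$ satisfies $\mathrm{cl}_F(q)\subsetneq S$, hence $q<_F p$ and $w_q<M$.

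First I would record the consequences of $w=w_1\boxplus w_2$ on the sub-leaf-set $S$. Since $w\ge w_t$ coordinatewise, every within-part pair $q$ satisfies $(w_t)_q<M$ for $t\in\{1,2\}$, while each cross pair $p\in C$ satisfies $\max\big((w_1)_p,(w_2)_p\big)=M$. The key step is to exploit that each $w_t$ is an ultrametric: the three-point condition gives the strong triangle inequality $(w_t)_{xz}\le \max\big((w_t)_{xy},(w_t)_{yz}\big)$, so the relation ``$x\approx_t y$ iff $(w_t)_{xy}<M$'' is an equivalence relation on $S$. Let $\mathcal G_t$ be the resulting partition of $S$; then $U_t:=\{p\in C:(w_t)_p=M\}$ consists exactly of the pairs crossing $\mathcal G_t$. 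Because every within-part pair has $(w_t)_q<M$, each of $A,B$ lies inside a single block, i.e.\ $\mathcal G_t$ is coarser than the two-block partition $\{A,B\}$.

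Next I would translate the identity $U_1\cup U_2=C$ (every cross pair attains $M$ in $w_1$ or in $w_2$) into a partition statement: a pair is within $A$ or within $B$ if and only if it is within a block of both $\mathcal G_1$ and $\mathcal G_2$, which says precisely that $\{A,B\}$ is the common refinement $\mathcal G_1\wedge\mathcal G_2$. The combinatorial punchline is then short. A partition coarser than $\{A,B\}$ must be either $\{A,B\}$ or the trivial partition $\{S\}$; if neither $\mathcal G_t$ equalled $\{A,B\}$, both would equal $\{S\}$, forcing $\mathcal G_1\wedge\mathcal G_2=\{S\}\ne\{A,B\}$, a contradiction. Hence at least one $\mathcal G_t$, say $\mathcal G_1$, equals $\{A,B\}$, so $(w_1)_p=M$ for every $p\in C$. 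Finally, any two cross pairs sharing a leaf are then equal in $w_1$, so Corollary \ref{cor:order} makes them $=_{F_1}$-equivalent; since $C$ is connected under the ``share a leaf'' relation (a rook's graph on $A\times B$, or a star when a part is a singleton), $\mathrm{cl}_{F_1}$ is constant on $C$, i.e.\ $C$ lies in a single $=_{F_1}$-class, which is the desired conclusion.

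The main obstacle I anticipate is not any single estimate but getting the bookkeeping exactly right: confirming that $C$ is precisely the cross pairs $A\times B$ for every $S\in\bar F$ (including $S=[N]$), and justifying that the single value $M$ is simultaneously the correct threshold for both ultrametrics, so that the two level-set partitions $\mathcal G_1,\mathcal G_2$ refine to exactly $\{A,B\}$. Once the common-refinement identity $\{A,B\}=\mathcal G_1\wedge\mathcal G_2$ is in place, the partition pigeonhole and the passage from equal $w_1$-values to equal $F_1$-closures via Corollary \ref{cor:order} are routine.
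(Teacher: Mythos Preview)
Your argument is correct and in fact slightly stronger than the paper's, since your partition approach never invokes full-dimensionality of $F_1$ or $F_2$; only the bifurcation of $F$ is used.

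Both proofs share the same opening: split $S\in\bar F$ into its two children $A\sqcup B$, identify the $=_F$-class $C$ with the set of cross pairs, and record the common value $M=w_p$. From there the arguments diverge. The paper colors the edges of the complete bipartite graph $K_{|A|,|B|}$ pink or purple according to whether $w^1_p=M$ or $w^2_p=M$, and then runs an edge-by-edge case analysis: assuming some edge is not pink, it propagates ``purple'' to all neighbors using Corollary~\ref{cor:order} together with Theorem~\ref{thm:fulldim}(2) applied to $F_1$, $F_2$, and $F$ in turn, eventually covering the whole graph. Your route is more structural: you use the ultrametric inequality to observe that ``$(w_t)_{xy}<M$'' is an equivalence relation on $S$, obtaining partitions $\mathcal G_1,\mathcal G_2$ that coarsen $\{A,B\}$; the identity $U_1\cup U_2=C$ then becomes $\mathcal G_1\wedge\mathcal G_2=\{A,B\}$, and the two-element partition lattice instantly forces one of the $\mathcal G_t$ to equal $\{A,B\}$. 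This replaces the paper's coloring casework with a one-line pigeonhole on partitions, and because it relies only on the ultrametric property of $w_1,w_2$ and on Corollary~\ref{cor:order} (neither of which needs $F_1,F_2$ to be full dimensional), your proof actually establishes the conclusion under the weaker hypothesis that only $F$ is full dimensional. The final step---passing from constant $w_1$-value on $C$ to a single $=_{F_1}$-class via connectivity of the rook's graph on $A\times B$---is exactly parallel to the paper's closing use of Corollary~\ref{cor:order}.
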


\begin{proof}
	Suppose $F_{3}\in C(F_{1},F_{2})$. There exist ultrametrics $w^{1},w^{2},w^{3}$ such that for $p\in p_{N}$, $w^{3}_{p}=\max(w^{1}_{p},w^{2}_{p})$ and $w^{i}\in ut(F_{i})$ for $i=1,2,3$.  Choose $S\in \bar{F} = \big\{S\in F\mid |S|\ge 3 \big\} \cup [N]$.  By Theorem \ref{thm:fulldim}, $F$ is bifurcated, so condition (a) or (b) of Definition \ref{def:bifur} holds for $S$.  If (a) holds, we set $X=S'$ and $Y=S \backslash S'$; if (b) holds, we set $X=S_{1}$ and $Y=S_{2}$.  Then for $p=\{i,j\}\in p_{N}$, $\mathrm{cl}_{F}(p)=S$ if and only if $(i,j)\in X\times Y$ or $(j,i)\in X\times Y$.  Let $M=w^{3}_{\{ i,j \}}$ for all $i\in X$ and $j\in Y$, then
	\begin{equation}\label{eq:mmax}
		\max(w^{1}_{\{i,j\}},w^{2}_{\{i,j\}})=M 
	\end{equation}
for all $i\in X$ and $j\in Y$.
	
	Consider a complete, bipartite graph $G := K_{|X|,|Y|}$ with vertices $X\cup Y$.  Recall that the vertices of a bipartite graph can be partitioned into two disjoint, independent sets where every graph edge connects a vertex in one set to one in the other.  Thus, for $(i,j)\in X\times Y$, if $w^{1}_{\{i,j\}}=M$, then we call the edge $(i,j)$ of $G$ pink; if $w^{2}_{\{i,j\}}=M$, then we call the edge $(i,j)$ of $G$ purple.  Then, each edge in $G$ is pink, purple, or both pink and purple.  We claim that in fact, either all edges of $G$ are pink, or all edges of $G$ are purple. 
	
	Suppose there exists a non-pink edge $(i,j)\in X\times Y$ in $G$.  Then this edge is purple: $M=w^{2}_{\{i,j\}}\ne w^{1}_{\{i,j\}}$.  By (\ref{eq:mmax}), $w^{1}_{\{i,j\}} < M$.  For any other element $j'\in Y,j'\ne j$, if $(i,j')$ is not purple, then $w^{2}_{\{i,j'\}}<M=w^{1}_{\{i,j'\}}$.  Note that $w^{1}_{\{i,j\}} < w^{1}_{\{i,j'\}} $, thus by Corollary \ref{cor:order}, $\{i,j\} <_{F_{1}} \{i,j'\}$.  Similarly, since $w^{2}_{\{i,j'\}} < w^{2}_{\{i,j\}}$, we have $\{i,j'\} <_{F_{2}} \{i,j\}$.  By Theorem \ref{thm:fulldim}, $\{j,j'\}=_{F_{1}} \{i,j'\}$ and $\{j,j'\}=_{F_{2}} \{i,j\}$.  Then $w^{1}_{\{j,j'\}}=w^{2}_{\{j,j'\}}=M$ and $w_{\{j,j'\}}=M$.  By Corollary \ref{cor:order}, this means that $\{i,j\} =_{F} \{i,j'\} =_{F} \{j,j'\}$, which contradicts that $F$ is full dimensional.  Therefore $(i,j')$ must be purple.  Symmetrically, for any $i'\in X$ with $i'\ne i$, $(i',j)$ must be purple.  Then for such $i'$ and $j'$, we have
	\begin{equation*}
		w^{2}_{\{i,j\}} = w^{2}_{\{i,j'\}} = w^{2}_{\{i',j\}} = M.
	\end{equation*}
By Corollary \ref{cor:order}, we have $\{i,j\} =_{F_{2}} \{i,j'\} =_{F_{2}} \{i',j\}$.  Since $F_{2}$ is full dimensional, Theorem \ref{thm:fulldim} gives us that $\{j,j'\} <_{F_{2}} \{i,j\} =_{F_{2}} \{i,j'\} $.  Then $\{j,j'\} <_{F_{2}} \{i',j\}$, and thus $\{i',j'\} =_{F_{2}} \{i',j\}$.  Hence $w^{2}_{\{i',j'\}}=M$ and the edge $(i',j')$ in $G$ is purple too.  So all edges in $G$ are purple, thus proving our claim.
	
	Finally, if all edges in $G$ are pink, then all $w^{1}_{\{i,j\}}$ are equal for $(i,j)\in X\times Y$.  By Corollary \ref{cor:order}, all these $\{i,j\}$ belong to the same equivalence class with respect to $=_{F_{1}}$; symmetrically, if all edges in $G$ are purple, then all $w^{2}_{\{i,j\}}$ are equal for $(i,j)\in X\times Y$ and all these $\{i,j\}$ belong to the same equivalence class with respect to $=_{F_{2}}$.
\end{proof}

There are two important remarks concerning this result to highlight:

\begin{remark}\label{rem:nonfull}
Theorem \ref{thm:compatible} may still hold when one of the tree topologies is not full dimensional, say $F_1$.  For example, let $N=5$ and  
	\begin{equation*}
		F_{1}=\{3,4\}, \mbox{~~~} F_{2}= \big\{\{1,4\}, \{2,3\}, \{1,2,3,4\} \big\}.
	\end{equation*}
For $F= \big\{\{1,4\}, \{1,3,4\}, \{2,5\} \big\}$, $F\notin C(F_{1},F_{2})$: Suppose $w^{1}\in ut(F_{1})$ and $w^{2}\in ut(F_{2})$.  Since $\mathrm{cl}_{F_{i}}(\{2,5\})=[5]$ for $i=1,2$, by definition $(w^{i})_{\{2,5\}}=\max_{p\in p_{5}}{(w^{i})_{p}}$ for $i=1,2$.  Let $w=w^{1} \boxplus w^{2}$.  Then $w_{\{2,5\}}=\max_{p\in p_{5}}{w_{p}}$ also.  However, since $\{2,5\}\in F$, $\mathrm{cl}_{F}\big(\{2,5\} \big) \ne [5]$ and thus $w\notin ut(F)$. 
	Nevertheless, the conclusion of Theorem \ref{thm:compatible} holds for $F$, since for $p\in p_{5}$, we have:
$$
	\begin{array}{ccccccc}
		\mathrm{cl}_{F}(p)&=&[5] & \mbox{~implies~} & \mathrm{cl}_{F_{1}}(p)&=&[5];\\
		\mathrm{cl}_{F}(p)&=&\{1,3,4\} & \mbox{~implies~} & \mathrm{cl}_{F_{2}}(p)&=&\{1,2,3,4\};\\
		\mathrm{cl}_{F}(p)&=&\{1,4\} & \mbox{~implies~} & \mathrm{cl}_{F_{2}}(p)&=&\{1,4\};\\
		\mathrm{cl}_{F}(p)&=&\{2,5\} & \mbox{~implies~} & \mathrm{cl}_{F_{1}}(p)&=&[5].\\
	\end{array}
$$
\end{remark}

\begin{remark}\label{rem:compatible}
Theorem \ref{thm:compatible} is a combinatorial approach to searching for all possible tree topologies a tropical line segment of tree ultrametrics can traverse; Example \ref{ex:compatible} illustrates this procedure.  However, the converse is not true: given full-dimensional tree topologies satisfying the conditions of Theorem \ref{thm:compatible}, it is not always possible to construct tropical line segments that traverse these topologies; see Example \ref{ex:12leaves} for details.
\end{remark}

\begin{example}\label{ex:compatible}
Let $N=5$ and choose the following two full-dimensional tree topologies on $[5]$: 
	\[F_{1} = \big\{\{1,2,3\},\{1,2\},\{4,5\} \big\} \mbox{~~and~~} F_{2} = \big\{\{1,3,4,5\},\{1,3,5\},\{1,5\} \big\}.\] 
Then the full-dimensional tree topologies in $C(F_{1},F_{2})$ are $F_{1},F_{2}$ themselves, and three others:
	\[ \big\{\{1, 3, 4, 5\}, \{1, 3, 5\}, \{1, 3\} \big\}, \mbox{~~~}
	\big\{\{1, 3, 4, 5\}, \{4, 5\}, \{1, 3\} \big\}, \mbox{~~~}
	\big\{\{1, 2, 3\}, \{1, 3\}, \{4, 5\} \big\}.\]
Note that $F_{3}= \big\{\{1, 2, 3\}, \{2, 3\}, \{4, 5\} \big\}$ does not belong to $C(F_{1},F_{2})$, because $\{1,2\}$ and $\{1,3\}$ form an equivalence class with respect to $=_{F}$, but $\{1,3\} <_{F_{i}} \{1,2\}$ for $i=1,2$.  This gives an example of the non-existence of certain types of trees.
\end{example}

We now give an example where the converse direction of Theorem \ref{thm:compatible} fails.

\begin{example}\label{ex:12leaves}

\begin{figure}[h]
	\centering
	\begin{minipage}{0.3\textwidth}
		\centering
		\begin{tikzpicture}[scale=0.225]
		\draw (0,0) -- (1.5,3) -- (3,0);
		\draw (2,0) -- (1,2);
		\draw (4,0) -- (5.5,3) -- (7,0);
		\draw (5,0) -- (6,2);
		\draw (1.5,3) -- (3.5,7) -- (5.5,3);
		\draw (8,0) -- (9.5,3) -- (11,0);
		\draw (10,0) -- (9,2);
		\draw (12,0) -- (13.5,3) -- (15,0);
		\draw (13,0) -- (14,2);
		\draw (9.5,3) -- (11.5,7) -- (13.5,3);
		\draw (3.5,7) -- (7.5,15) -- (11.5,7);
		\filldraw [black] (0,0) circle (1pt);
		\filldraw [black] (2,0) circle (1pt);
		\filldraw [black] (3,0) circle (1pt);
		\filldraw [black] (4,0) circle (1pt);
		\filldraw [black] (5,0) circle (1pt);
		\filldraw [black] (7,0) circle (1pt);
		\filldraw [black] (8,0) circle (1pt);
		\filldraw [black] (10,0) circle (1pt);
		\filldraw [black] (11,0) circle (1pt);
		\filldraw [black] (12,0) circle (1pt);
		\filldraw [black] (13,0) circle (1pt);
		\filldraw [black] (15,0) circle (1pt);
		\node [below] at (0,0) {\tiny{$1$}};
		\node [below] at (7,0) {\tiny{$2$}};	
		\node [below] at (8,0) {\tiny{$3$}};
		\node [below] at (15,0) {\tiny{$4$}};
		\node [below] at (10,0) {\tiny{$5$}};
		\node [below] at (13,0) {\tiny{$6$}};	
		\node [below] at (2,0) {\tiny{$7$}};
		\node [below] at (5,0) {\tiny{$8$}};
		\node [below] at (3,0) {\tiny{$9$}};
		\node [below] at (12,0) {\tiny{$10$}};	
		\node [below] at (11,0) {\tiny{$11$}};
		\node [below] at (4,0) {\tiny{$12$}};	
		\node [above] at (7.5,15) {$F_{1}$};
		\end{tikzpicture}
	\end{minipage}
	\begin{minipage}{0.3\textwidth}
		\centering
		\begin{tikzpicture}[scale=0.225]
		\draw (0,0) -- (1.5,3) -- (3,0);
		\draw (2,0) -- (1,2);
		\draw (4,0) -- (5.5,3) -- (7,0);
		\draw (6,0) -- (5,2);
		\draw (1.5,3) -- (3.5,7) -- (5.5,3);
		\draw (8,0) -- (9.5,3) -- (11,0);
		\draw (10,0) -- (9,2);
		\draw (12,0) -- (13.5,3) -- (15,0);
		\draw (14,0) -- (13,2);
		\draw (9.5,3) -- (11.5,7) -- (13.5,3);
		\draw (3.5,7) -- (7.5,15) -- (11.5,7);
		\filldraw [black] (0,0) circle (1pt);
		\filldraw [black] (2,0) circle (1pt);
		\filldraw [black] (3,0) circle (1pt);
		\filldraw [black] (4,0) circle (1pt);
		\filldraw [black] (6,0) circle (1pt);
		\filldraw [black] (7,0) circle (1pt);
		\filldraw [black] (8,0) circle (1pt);
		\filldraw [black] (10,0) circle (1pt);
		\filldraw [black] (11,0) circle (1pt);
		\filldraw [black] (12,0) circle (1pt);
		\filldraw [black] (14,0) circle (1pt);
		\filldraw [black] (15,0) circle (1pt);
		\node [below] at (0,0) {\tiny{$1$}};
		\node [below] at (3,0) {\tiny{$2$}};	
		\node [below] at (4,0) {\tiny{$3$}};
		\node [below] at (7,0) {\tiny{$4$}};
		\node [below] at (8,0) {\tiny{$5$}};
		\node [below] at (11,0) {\tiny{$6$}};	
		\node [below] at (12,0) {\tiny{$7$}};
		\node [below] at (15,0) {\tiny{$8$}};
		\node [below] at (2,0) {\tiny{$9$}};
		\node [below] at (6,0) {\tiny{$10$}};	
		\node [below] at (10,0) {\tiny{$11$}};
		\node [below] at (14,0) {\tiny{$12$}};
		\node [above] at (7.5,15) {$F$};
		\end{tikzpicture}
	\end{minipage}
	\begin{minipage}{0.3\textwidth}
		\centering
		\begin{tikzpicture}[scale=0.225]
		\draw (0,0) -- (3.5,7) -- (5.5,3);
		\draw (2,0) -- (4.5,5);
		\draw (3,0) -- (5,4);
		\draw (4,0) -- (5.5,3) -- (7,0);
		\draw (6,0) -- (5,2);
		\draw (8,0) -- (9.5,3) -- (11,0);
		\draw (10,0) -- (9,2);
		\draw (12,0) -- (13.5,3) -- (15,0);
		\draw (14,0) -- (13,2);
		\draw (9.5,3) -- (11.5,7) -- (13.5,3);
		\draw (3.5,7) -- (7.5,15) -- (11.5,7);
		\filldraw [black] (0,0) circle (1pt);
		\filldraw [black] (2,0) circle (1pt);
		\filldraw [black] (3,0) circle (1pt);
		\filldraw [black] (4,0) circle (1pt);
		\filldraw [black] (6,0) circle (1pt);
		\filldraw [black] (7,0) circle (1pt);
		\filldraw [black] (8,0) circle (1pt);
		\filldraw [black] (10,0) circle (1pt);
		\filldraw [black] (11,0) circle (1pt);
		\filldraw [black] (12,0) circle (1pt);
		\filldraw [black] (14,0) circle (1pt);
		\filldraw [black] (15,0) circle (1pt);
		\node [below] at (0,0) {\tiny{$1$}};
		\node [below] at (3,0) {\tiny{$2$}};	
		\node [below] at (4,0) {\tiny{$3$}};
		\node [below] at (7,0) {\tiny{$4$}};
		\node [below] at (8,0) {\tiny{$5$}};
		\node [below] at (14,0) {\tiny{$6$}};	
		\node [below] at (10,0) {\tiny{$7$}};
		\node [below] at (15,0) {\tiny{$8$}};
		\node [below] at (2,0) {\tiny{$9$}};
		\node [below] at (6,0) {\tiny{$10$}};	
		\node [below] at (12,0) {\tiny{$11$}};
		\node [below] at (11,0) {\tiny{$12$}};
		\node [above] at (7.5,15) {$F_{2}$};
		\end{tikzpicture}
	\end{minipage}
	\caption{Tree topologies $F_{1},F_{2},F$ with $12$ leaves in Example \ref{ex:12leaves}.}
\end{figure}
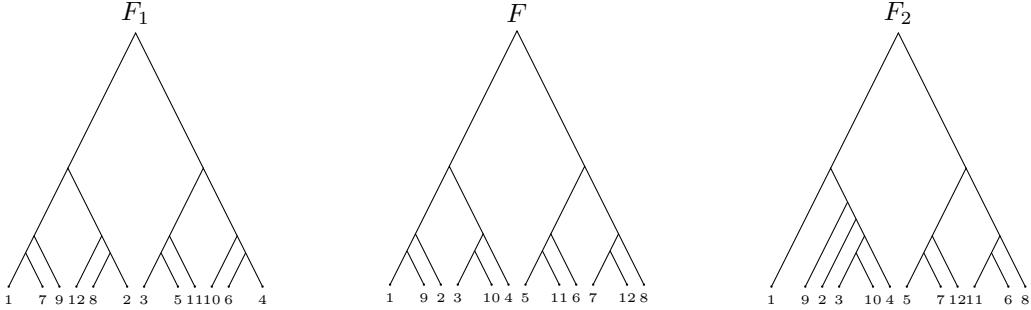

Following Remark \ref{rem:compatible}, let $N=12$.  Consider the following three tree topologies $F_{1},F_{2},F$, depicted in Figure \ref{fig:permutation2}:
\begin{align*}
F_{1}=\big\{&\{1,2,7,8,9,12\},\, \{1,7,9\},\, \{2,8,12\},\, \{1,7\},\, \{2,8\},\\
&\{3,4,5,6,10,11\},\, \{3,5,11\},\, \{4,6,10\},\, \{3,5\},\, \{4,6\} \big\} \\
F_{2} =\big\{&\{1,2,3,4,9,10\},\, \{2,3,4,9,10\},\, \{2,3,4,10\},\, \{3,4,10\},\, \{3,10\},\\
&\{5,6,7,8,11,12\},\, \{5,7,12\},\, \{6,8,11\},\, \{5,7\},\, \{6,8\} \big\} \\
F = \big\{&\{1,2,3,4,9,10\},\, \{1,2,9\},\, \{3,4,10\},\, \{1,9\},\, \{3,10\},\\
&\{5,6,7,8,11,12\},\, \{5,6,11\},\, \{7,8,12\},\, \{5,11\},\, \{7,12\}\big\}. \\
\end{align*}
Note that $|F_{1}|=|F_{2}|=|F|=10$, so $F_{1},F_{2},F$ are all full dimensional.  Moreover, Theorem \ref{thm:compatible} holds for $F$, since for $p\in p_{12}$, we have
$$ 	
	\begin{array}{ccccccccc}
		\mathrm{cl}_{F}(p)&=&[12] & \mbox{ implies } & \mathrm{cl}_{F_{2}}(p)&=&[12];\\
		\mathrm{cl}_{F}(p)&=&\{1,2,3,4,9,10\} & \mbox{ implies } & \mathrm{cl}_{F_{2}}(p)&=&\{1,2,3,4,9,10\};\\
		\mathrm{cl}_{F}(p)&=&\{5,6,7,8,11,12\} & \mbox{ implies } & \mathrm{cl}_{F_{2}}(p)&=&\{5,6,7,8,11,12\};\\
		\mathrm{cl}_{F}(p)&=&\{1,2,9\} & \mbox{ implies } & \mathrm{cl}_{F_{1}}(p)&=&\{1,2,7,8,9,12\};\\
		\mathrm{cl}_{F}(p)&=&\{3,4,10\} & \mbox{ implies } & \mathrm{cl}_{F_{2}}(p)&=&\{3,4,10\};\\
		\mathrm{cl}_{F}(p)&=&\{5,6,11\} & \mbox{ implies } & \mathrm{cl}_{F_{1}}(p)&=&\{3,4,5,6,10,11\};\\
		\mathrm{cl}_{F}(p)&=&\{7,8,12\} & \mbox{ implies } & \mathrm{cl}_{F_{2}}(p)&=&\{5,6,7,8,11,12\};\\
		\mathrm{cl}_{F}(p)&=&\{1,9\} & \mbox{ implies } & \mathrm{cl}_{F_{1}}(p)&=&\{1,7,9\};\\
		\mathrm{cl}_{F}(p)&=&\{3,10\} & \mbox{ implies } & \mathrm{cl}_{F_{2}}(p)&=&\{3,10\};\\
		\mathrm{cl}_{F}(p)&=&\{5,11\} & \mbox{ implies } & \mathrm{cl}_{F_{1}}(p)&=&\{3,5,11\};\\
		\mathrm{cl}_{F}(p)&=&\{7,12\} & \mbox{ implies } & \mathrm{cl}_{F_{2}}(p)&=&\{5,7,12\}.\\
	\end{array}
$$
However, $F\notin C(F_{1},F_{2})$: suppose there exist $w\in ut(F), w^{1}\in ut(F_{1}), w^{2}\in ut(F_{2})$ such that $w = w^{1} \boxplus w^{2}$. Note that $\{1,2\} =_{F} \{2,9\}$ and $\{1,2\} =_{F_{1}} \{2,9\}$ but $\{2,9\} <_{F_{2}} \{1,2\}$. Then 
	\begin{equation*}
		w^{2}_{\{2,9\}} < w^{2}_{\{1,2\}} \le \max\big(w^{1}_{\{1,2\}},w^{2}_{\{1,2\}} \big) = w_{\{1,2\}}  = w_{\{2,9\}}=\max\big(w^{1}_{\{2,9\}},w^{2}_{\{2,9\}} \big).
	\end{equation*}
	Hence $w^{1}_{\{2,9\}}>w^{2}_{\{2,9\}}$, and we have 
	\begin{equation*}
		w^{1}_{\{1,2\}} = w^{1}_{\{2,9\}} = w_{\{2,9\}}=w_{\{1,2\}}=\max\big(w^{1}_{\{1,2\}},w^{2}_{\{1,2\}} \big).
	\end{equation*}
Thus, 
	\begin{equation}\label{eq:12leaves-12}
		w^{1}_{\{1,2\}} \ge w^{2}_{\{1,2\}}.
	\end{equation}
Similarly, $\{4,10\} <_{F_{1}} \{3,4\}$ implies that
	\begin{equation}\label{eq:12leaves-34}
		w^{2}_{\{3,4\}} \ge w^{1}_{\{3,4\}}.
	\end{equation}
We also have $\{6,11\} <_{F_{2}} \{5,6\}$, which implies that
	\begin{equation}\label{eq:12leaves-56}
		w^{1}_{\{5,6\}} \ge w^{2}_{\{5,6\}},
	\end{equation}
and $\{8,12\} <_{F_{1}} \{7,8\}$ implies that
	\begin{equation}\label{eq:12leaves-78}
	w^{2}_{\{7,8\}} \ge w^{1}_{\{7,8\}}.
	\end{equation}

Then Proposition \ref{prop:conedefn} together with (\ref{eq:12leaves-12}), (\ref{eq:12leaves-34}), (\ref{eq:12leaves-56}), and (\ref{eq:12leaves-78}), we have the following chain of inequalities:
	\begin{equation*}\label{eq:cycle}
		w^{1}_{\{1,2\}} \ge w^{2}_{\{1,2\}} > w^{2}_{\{3,4\}} \ge w^{1}_{\{3,4\}} = w^{1}_{\{5,6\}} \ge w^{2}_{\{5,6\}} = w^{2}_{\{7,8\}} \ge w^{1}_{\{7,8\}} = w^{1}_{\{1,2\}}
	\end{equation*}
---a contradiction.  Hence such tree ultrametrics $w, w^{1}, w^{2}$ do not exist.
\end{example}

\subsection{Symmetry on Tropical Line Segments}

In contrast to the previous result, which dealt with how tree topologies vary along tropical line segments, we now turn our focus to understanding when and how {\em invariance} arises in the space of ultrametrics.  To do this, we define the notion of symmetry on ultrametrics in terms of leaf relabeling.  The natural setting for such a study is the action of the symmetric group $\Sym(N)$ on $N$ labels on $[N]$, given by permuting the coordinates (positions) of the labels of the leaves.

In our study, we consider the map $\Sigma: \mathcal{U}_N \times \Sym(N) \to \mathcal{U}_N $ defined by 
\[
\Sigma(w, \sigma) = (w_{\{\sigma_1,\sigma_2\}}, w_{\{\sigma_1,\sigma_3\}}, \ldots , w_{\{\sigma_{N-1},\sigma_{N}\}} ),
\]
where $w$ is an ultrametric and $\sigma \in \Sym(N)$ is an $N$th-order permutation of the symmetric group.

\begin{definition}
\label{def:sym}
Let $T$ be an equidistant tree with $N$ leaves and let $w_T \in \mathcal{U}_N$ be a tree metric associated with $T$.  Define the equivalence relation $\sim_{\sigma}$ between equidistant trees $T$ and $T'$ with $N$ leaves by $T \sim_{\sigma} T'$ if and only if $T$ and $T'$ have the same tree topology and branch lengths, but the labels of leaves in $T'$ are permuted by $\sigma \in \Sym(N)$.
\end{definition}

\begin{example}
\label{ex:perm1}
Let $T_1$ and $T_2$ be the equidistant trees shown in Figure \ref{fig:permutation}.  Here, $T_1 \sim_{\sigma} T_2$, where $\sigma=(2, 3, 1, 4)$.

\begin{figure}[h]
  \centering
    \begin{tikzpicture}[scale=0.75]
    	\draw (0,0) -- (3,5) -- (6,0);
    	\draw (4.8,2) -- (3.6,0);
    	\draw (5.4,1) -- (4.8,0);
    	\filldraw [black] (0,0) circle (1pt);
    	\filldraw [black] (3.6,0) circle (1pt);
    	\filldraw [black] (4.8,0) circle (1pt);
    	\filldraw [black] (6,0) circle (1pt);
    	\node [below] at (0,0) {$4$};
    	\node [below] at (3.6,0) {$3$};
    	\node [below] at (4.8,0) {$2$};
    	\node [below] at (6,0) {$1$};
    	\node at (4.5,3.5) {$0.6$};
    	\node at (5.6,1.6) {$0.2$};
    	\node at (6.25,0.6) {$0.2$};
    	\node at (1,4) {$T_{1}$};
    	
    	\draw (10,0) -- (13,5) -- (16,0);
    	\draw (14.8,2) -- (13.6,0);
    	\draw (15.4,1) -- (14.8,0);
    	\filldraw [black] (10,0) circle (1pt);
    	\filldraw [black] (13.6,0) circle (1pt);
    	\filldraw [black] (14.8,0) circle (1pt);
    	\filldraw [black] (16,0) circle (1pt);
    	\node [below] at (10,0) {$4$};
    	\node [below] at (13.6,0) {$1$};
    	\node [below] at (14.8,0) {$3$};
    	\node [below] at (16,0) {$2$};
    	\node at (14.5,3.5) {$0.6$};
    	\node at (15.6,1.6) {$0.2$};
    	\node at (16.2,0.6) {$0.2$};
    	\node at (11,4) {$T_{2}$};   	
    \end{tikzpicture}
    
  \caption{$T_1$ and $T_2$ in Example \ref{ex:perm1}. }\label{fig:permutation} 
  \end{figure}
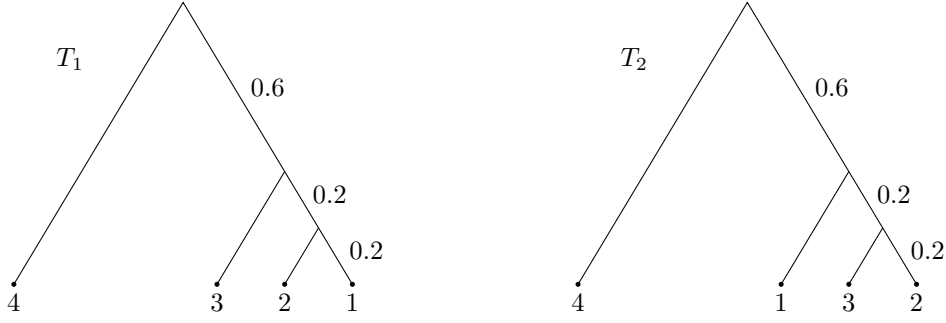

\end{example}

\begin{lemma}
\label{lemma:perm}
Let $T_1, T_2$ be equidistant trees with $N$ leaves.  Let $w^{1}$ be a tree ultrametric for $T_1$ and $w^{2}$ be a tree ultrametric for $T_2$.  Then $T_1 \sim_{\sigma} T_2$ if and only if $\Sigma(w^{1}, \sigma^{-1}) = w^{2}$ or equivalently, $\Sigma(w^{2}, \sigma) = w^{1}$.
\end{lemma}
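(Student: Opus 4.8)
The plan is to reduce the whole statement to a single coordinate-wise identity between pairwise leaf distances, so that both claimed equivalences become bookkeeping once the permutation convention is fixed. Throughout I would write $\sigma$ in one-line notation, $\sigma = (\sigma_1, \ldots, \sigma_N)$ with $\sigma_i = \sigma(i)$, so that unpacking the definition of $\Sigma$ gives, for every pair $\{i,j\} \in p_N$,
\[
\Sigma(w, \sigma)_{\{i,j\}} = w_{\{\sigma(i), \sigma(j)\}}.
\]

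First I would translate $T_1 =_\sigma T_2$ into distances. By Definition \ref{def:sym}, $T_2$ is obtained from $T_1$ by retaining the topology and all branch lengths and only renaming leaf $i$ as $\sigma(i)$; this renaming is a graph isomorphism, so it carries the unique path between two leaves of $T_1$ to the unique path between the correspondingly renamed leaves of $T_2$ and preserves its total length. Hence $T_1 =_\sigma T_2$ is equivalent to
\[
w^2_{\{\sigma(i), \sigma(j)\}} = w^1_{\{i,j\}} \qquad \text{for all } \{i,j\} \in p_N.
\]

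Next I would combine the two displays. Substituting $k = \sigma(i)$ and $\ell = \sigma(j)$ rewrites the distance identity as $w^2_{\{k,\ell\}} = w^1_{\{\sigma^{-1}(k), \sigma^{-1}(\ell)\}} = \Sigma(w^1, \sigma^{-1})_{\{k,\ell\}}$ for all $\{k,\ell\}$, which is exactly $\Sigma(w^1, \sigma^{-1}) = w^2$; reading the same chain backwards recovers $T_1 =_\sigma T_2$ from the equation, giving the first equivalence. For the second equivalence I would record the composition rule $\Sigma(\Sigma(w, \alpha), \beta)_{\{i,j\}} = \Sigma(w,\alpha)_{\{\beta(i),\beta(j)\}} = w_{\{\alpha(\beta(i)), \alpha(\beta(j))\}}$, i.e.\ $\Sigma(\Sigma(w,\alpha),\beta) = \Sigma(w, \alpha\circ\beta)$. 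Applying $\Sigma(-,\sigma)$ to both sides of $\Sigma(w^1, \sigma^{-1}) = w^2$ and using $\Sigma(w^1, \sigma^{-1}\circ\sigma) = \Sigma(w^1,\mathrm{id}) = w^1$ then yields $w^1 = \Sigma(w^2,\sigma)$, and the reverse implication is identical.

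I expect essentially no analytic obstacle here; the statement is a convention-tracking exercise, and the one genuinely delicate point is the direction of the permutation --- whether relabeling ``by $\sigma$'' corresponds to $\Sigma(\cdot, \sigma)$ or to $\Sigma(\cdot, \sigma^{-1})$, since a slip there transposes the two forms of the conclusion. Fixing $\sigma_i = \sigma(i)$ at the outset and propagating it through the substitution $k = \sigma(i)$ controls this. A minor secondary point is that the identities live in the tropical projective torus $\mathbb{R}^n/\mathbb{R}\one$: since $\Sigma$ merely permutes coordinates it fixes the line $\mathbb{R}\one$ and therefore descends to the quotient, so the coordinate-wise equalities above hold for the canonical metric representatives and hence in $\mathcal{U}_N$.
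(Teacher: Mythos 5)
Your proof is correct and takes essentially the same route as the paper's: both unwind Definition \ref{def:sym} and the definition of $\Sigma$ into the single coordinatewise identity $w^{1}_{\{i,j\}} = w^{2}_{\{\sigma_i,\sigma_j\}}$ and read it in both directions (both proofs equally leave implicit that a tree ultrametric determines its equidistant tree, which is what the backward direction really rests on). Your extra bookkeeping---the composition rule $\Sigma(\Sigma(w,\alpha),\beta)=\Sigma(w,\alpha\circ\beta)$ yielding the second form, and the remark that $\Sigma$ descends to $\mathbb{R}^n/\mathbb{R}\one$---is just a more explicit rendering of steps the paper takes for granted.
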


\begin{proof}
Suppose $T_1 \sim_{\sigma} T_2$.  Let $w^{1} = (w^{1}_{\{1,2\}}, \ldots , w^{1}_{\{N-1,N\}})$ and, similarly, $w^{2} = (w^{2}_{\{1,2\}}, \ldots , w^{2}_{\{N-1,N\}})$.  We have that $w^{1}_{\{i,j\}}$ is the distance from a leaf $i$ to a leaf $j$ on $T_1$, and $w^{2}_{\{i,j\}}$ is the distance from a leaf $i$ to a leaf $j$ on $T_2$, by the definition of a tree metric.  Since $T_1 \sim_{\sigma} T_2$,
we have that $w^{1}_{\{i,j\}} = w^{2}_{\{\sigma_{i}, \sigma_{j}\}}$.  Thus, $\Sigma (w^{1}, \sigma^{-1}) = w^{2}$.

Now suppose $\Sigma (w^1, \sigma^{-1}) = w^{2}$.  Then we have $w^{1}_{\{i,j\}} = w^{2}_{\{\sigma_{i}, \sigma_{j}\}}$ for any pair of leaves $i, j \in [N]$.  Since $w^{1}, \, w^{2} \in \mathcal{U}_N$, there exist equidistant trees $T_1, T_2$ with $N$ leaves, respectively, by Proposition \ref{prop:ultra}.  Then, $w^{1}_{\{i,j\}}$ is the distance from a leaf $i$ to a leaf $j$ on $T_1$, and $w^{2}_{\{i,j\}}$ is the distance from a leaf $i$ to a leaf $j$ on $T_2$, by the definition of a tree metric.  Since $\Sigma (w^{1}, \sigma^{-1}) = w^{2}$, we have that $w^{1}_{\{i,j\}} = w^{2}_{\{\sigma_{i}, \sigma_{j}\}}$ for all pair of leaves $i$ and $j$.  Therefore, $T_1 \sim_{\sigma} T_2$. 
\end{proof}

\begin{example}
\label{ex:perm}
To illustrate Lemma \ref{lemma:perm}, we revisit the equidistant trees $T_1$ and $T_2$ shown in Figure \ref{fig:permutation}.  Their ultrametrics are 
\begin{align*}
w^{1} & = (0.4,\, 0.8,\, 2,\, 0.8,\, 2,\, 2),\\
w^{2} & = (0.8,\, 0.8,\, 2,\, 0.4,\, 2,\, 2).\\
\end{align*}
Then we have $\sigma=(2, 3, 1, 4)$ and $\sigma^{-1} = (3, 1, 2, 4)$, thus
\begin{align*}
\Sigma (w^{2}, \sigma) &= w^{1},\\
\Sigma (w^{1}, \sigma^{-1}) &= w^{2}.\\
\end{align*}
Note that for $T_1:$
\[
\begin{array}{rccccccccc}
w^{1}(1, 2)&=& w^{2}(\sigma_1, \sigma_2)&=& w^{2}(2, 3)&= &&& 0.4\\
w^{1}(1, 3)&=& w^{2}(\sigma_1, \sigma_3)&=& w^{2}(2, 1)&=& w^{2}(1, 2) &=& 0.8\\
w^{1}(1, 4)&=& w^{2}(\sigma_1, \sigma_4)& = & w^{2}(2, 4)&=&&& 2\\
w^{1}(2, 3)&= & w^{2}(\sigma_2, \sigma_3)& = & w^{2}(3, 1)&=& w^{2}(1, 3)& = & 0.8\\
w^{1}(2, 4)&=& w^{2}(\sigma_2, \sigma_4)& = & w^{2}(3, 4)&= &&& 2\\
w^{1}(3, 4)&= & w^{2}(\sigma_3, \sigma_4)& =& w^{2}(1, 4)&=&&& 2\\
\end{array}
\]
and similarly, for $T_2$:
\[
\begin{array}{rccccccccc}
w^{2}(1, 2) &=& w^{1}(\sigma^{-1}_1, \sigma^{-1}_2) &= & w^{1}(3, 1) &= & w^{1}(1, 3) &=& 0.8\\
w^{2}(1, 3) &=& w^{1}(\sigma^{-1}_1, \sigma^{-1}_3) & = & w^{1}(3, 2) &= & w^{1}(2, 3)&= & 0.8\\
w^{2}(1, 4) &=& w^{1}(\sigma^{-1}_1, \sigma^{-1}_4) & = & w^{1}(3, 4) &= &&&2\\
w^{2}(2, 3) &= & w^{1}(\sigma^{-1}_2, \sigma^{-1}_3) & = & w^{1}(1, 2) &= &&&0.4\\
w^{2}(2, 4) &=& w^{1}(\sigma^{-1}_2, \sigma^{-1}_4) & = & w^{1}(1, 4) &= &&&2\\
w^{2}(3, 4) &=& w^{1}(\sigma^{-1}_3, \sigma^{-1}_4) & = & w^{1}(2, 4) &=&&&2\\
\end{array}
\]
\end{example}

\smallskip

With this definition of symmetry given by permutation of leaf relabeling, we now study how symmetry behaves on tropical line segments.  Let $\Gamma (w^{T})$ be a tropical line segment from the origin $(0, 0,\ldots , 0)$ to the ultrametric $w^{T} \in \mathcal{U}_N$ associated with an equidistant tree $T$.  Also, let $\Gamma (w^{T}, w^{T_0})$ be a tropical line segment from the ultrametric $w^{T_0} \in \mathcal{U}_N$  associated with an equidistant tree $T_0$, to the ultrametric $w^{T} \in \mathcal{U}_N$ associated with an equidistant tree $T$.  These definitions give the following result.

\begin{proposition}\label{prop:perm1}
Suppose $T_1$ is an equidistant tree with $N$ leaves and let $T_2$ be an equidistant tree such that $T_1 \sim_{\sigma} T_2$.  Then
$$
\Sigma(\Gamma(w^{1}), \sigma) = \Gamma (w^{2})
$$
and
$$
\Sigma(\Gamma(w^{2}), \sigma^{-1}) = \Gamma (w^{1}).
$$
\end{proposition}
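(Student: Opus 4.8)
The plan is to exploit the fact that $\Sigma(\cdot,\sigma)$ acts on $\R^n/\R\one$ simply by permuting coordinates: by definition the $\{i,j\}$-entry of $\Sigma(w,\sigma)$ is the $\{\sigma_i,\sigma_j\}$-entry of $w$. First I would record the structural properties of such a coordinate permutation. It is \emph{tropically linear}: since $\boxplus$ is applied coordinate-wise and $\odot$ adds a common scalar to every coordinate, permuting coordinates commutes with both operations, so $\Sigma(a\odot x \boxplus b\odot y,\sigma)=a\odot\Sigma(x,\sigma)\boxplus b\odot\Sigma(y,\sigma)$ for all $x,y$ and all $a,b\in\R$. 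Moreover it fixes the origin, and more generally sends any constant vector $c\one$ to itself, so it is well defined on the quotient $\R^n/\R\one$ and carries $\mathcal{U}_N$ into $\mathcal{U}_N$, as already implicit in the definition of $\Sigma$.

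Next I would compute the image of the whole segment. By definition $\Gamma(w^{1})=\{\,a\odot\mathbf 0\boxplus b\odot w^{1} : a,b\in\R\,\}$. Applying $\Sigma(\cdot,\sigma)$ to a generic point and using tropical linearity together with $\Sigma(\mathbf 0,\sigma)=\mathbf 0$ gives
\[
\Sigma\big(a\odot\mathbf 0\boxplus b\odot w^{1},\;\sigma\big)=a\odot\mathbf 0\boxplus b\odot\Sigma(w^{1},\sigma).
\]
As $(a,b)$ ranges over $\R^2$ the left side sweeps out $\Sigma(\Gamma(w^{1}),\sigma)$ and the right side sweeps out $\Gamma(\Sigma(w^{1},\sigma))$, so the two sets coincide:
\[
\Sigma(\Gamma(w^{1}),\sigma)=\Gamma\big(\Sigma(w^{1},\sigma)\big).
\]
Thus relabeling the tropical segment from the origin is the same as relabeling only its free endpoint; this is an equivariance of $\Gamma$ under $\Sym(N)$. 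Proposition \ref{prop:tropline} guarantees throughout that the segments in question lie in $\mathcal{U}_N$, so these identities are statements about genuine ultrametrics.

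Finally I would feed in Lemma \ref{lemma:perm}. Because $T_1=_\sigma T_2$, that lemma supplies the endpoint identities $\Sigma(w^{2},\sigma)=w^{1}$ and $\Sigma(w^{1},\sigma^{-1})=w^{2}$. Substituting these into the equivariance formula $\Sigma(\Gamma(w),\rho)=\Gamma(\Sigma(w,\rho))$, applied with the matching permutation $\rho$, converts the segment identities into $\Gamma(w^{1})$ and $\Gamma(w^{2})$, yielding exactly the two assertions of the proposition. The one step I would check most carefully — and the only genuine subtlety — is the bookkeeping of $\sigma$ versus $\sigma^{-1}$: since composing two coordinate permutations corresponds to composing the underlying permutations, one must align the direction of relabeling in $\Sigma(\cdot,\rho)$ with the precise form of the endpoint identity given by Lemma \ref{lemma:perm} (i.e.\ keep straight which of $\Sigma(w^{1},\sigma^{-1})=w^{2}$ and $\Sigma(w^{2},\sigma)=w^{1}$ is being used). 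Everything else is an immediate consequence of the tropical linearity of coordinate permutations, so no new tropical-convexity input beyond what is recorded above is required.
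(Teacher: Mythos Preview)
Your approach is correct and takes a genuinely different, more conceptual route than the paper. The paper's proof sorts the coordinates of $w^{1}$ and $w^{2}$, observes via Lemma~\ref{lemma:perm} that the sorted sequences coincide, and then invokes the explicit bending-point construction of the tropical line segment from Proposition~5.2.5 of \cite{maclagan2015introduction} applied to the segment from the origin; equality of the sorted coordinate lists forces the two segments to match up under the relabeling. You bypass the algorithm entirely: recognizing that $\Sigma(\cdot,\rho)$ is a coordinate permutation and therefore commutes with $\boxplus$ and scalar $\odot$ while fixing $\mathbf{0}$, you obtain the clean equivariance $\Sigma(\Gamma(w),\rho)=\Gamma(\Sigma(w,\rho))$ for every $\rho\in\Sym(N)$, and then Lemma~\ref{lemma:perm} supplies the endpoint identity. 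Your argument is shorter, self-contained, and makes transparent that the result is a general $\Sym(N)$-equivariance of tropical segments rather than an artifact of the bending-point formula; the paper's argument, by contrast, keeps the explicit algorithm in view, which dovetails with its later computational sections. Your flagged caution about $\sigma$ versus $\sigma^{-1}$ is warranted: carrying your equivariance through with Lemma~\ref{lemma:perm} exactly as stated yields $\Sigma(\Gamma(w^{1}),\sigma^{-1})=\Gamma(w^{2})$ and $\Sigma(\Gamma(w^{2}),\sigma)=\Gamma(w^{1})$, so the orientation in the displayed identities should be checked against the conventions fixed in Definition~\ref{def:sym} and Lemma~\ref{lemma:perm}.
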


\begin{proof}
Let $w^{1} =(w^{1}_{\{1,2\}}, w^{1}_{\{1,3\}}, \ldots , w^{1}_{\{N-1,N\}})$ be an ultrametric computed from an equidistant tree $T_1$, and similarly, $w^{2}=(w^{2}_{\{1,2\}}, w^{2}_{\{1,3\}}, \ldots , w^{2}_{\{N-1,N\}})$ be an ultrametric computed from an equidistant tree $T_2$.  Order the coordinates of both $w^{1}$ and $w^{2}$ from the smallest and largest, and let $(w^{1}_{(1)}, \ldots , w^{1}_{(n)})$ and $(w^{2}_{(1)}, \ldots , w^{2}_{(n)})$ be the ultrametrics after ordering the coordinates of $w^{1}$ and $w^{2}$, respectively, from the smallest to the largest coordinates.  By Lemma \ref{lemma:perm}, $\Sigma(w^{1}, \sigma^{-1}) = w^{2}$ since $T_1 =_{\sigma} T_2$.  Thus, $w^{1}_{(i)} = w^{2}_{(i)}$ for $i = 1, \ldots , {n}$.  By applying the algorithm in the proof of Proposition 5.2.5 in \cite{maclagan2015introduction} on the line segment from the origin $(0, 0, \ldots , 0)$ to $(w^{1}_{(1)}, \ldots , w^{1}_{(n)})$, we obtain a tropical line segment from the origin to $w^{1}$.  Since $w^{1}_{(i)} = w^{2}_{(i)}$ for $i = 1, \ldots , {n}$, we have that $\Sigma(\Gamma(w^{1}), \sigma) = \Gamma (w^{2})$ or equivalently $\Sigma(\Gamma(w^{2}), \sigma^{-1}) = \Gamma (w^{1})$.
\end{proof}

The following results provide a formalization of invariance under the action of permutation of leaf labels in terms of tree topologies.  If two tree ultrametrics have the same topology and branch lengths, but differ by a permutation of leaf labels, and, correspondingly, two other tree ultrametrics have the same properties, and the orbit of the symmetric group action permuting the labels is the same for both sets of leaf-permuted trees, then the tropical line segments connecting these pairs coincide.

\begin{theorem}\label{thm:perm2}
Suppose $T_0$ and $T'_0$ are equidistant trees with $N$ leaves such that  $T_0 =_{\sigma} T'_0$.  Also, suppose $T$ and $T'$ are equidistant trees with $N$ leaves such that $T =_{\sigma} T'$.  Then
$$
\Sigma(\Gamma(w^{T}, w^{T_0}), \sigma) = \Gamma (w^{T'}, w^{T'_0})
$$
and
$$
\Sigma(\Gamma(w^{T'}, w^{T'_0}), \sigma^{-1}) = \Gamma (w^{T}, w^{T_0}).
$$
\end{theorem}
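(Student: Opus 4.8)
The plan is to exploit the fact that $\Sigma(\cdot,\sigma)$ is nothing more than a relabeling of coordinates, and therefore commutes with both tropical operations. Concretely, since permuting the entries of a vector commutes with adding a global constant and with taking coordinatewise maxima, we have, coordinate by coordinate,
$$
\Sigma(a\odot x \boxplus b\odot y,\,\sigma) = a\odot \Sigma(x,\sigma)\boxplus b\odot\Sigma(y,\sigma).
$$
Hence $\Sigma(\cdot,\sigma)$ is a tropical-linear bijection of $\R^{n}/\R\one$, and applying it to every point of a tropical line segment yields
$$
\Sigma\big(\Gamma(x,y),\,\sigma\big) = \Gamma\big(\Sigma(x,\sigma),\,\Sigma(y,\sigma)\big).
$$
Because the tropical line segment between two points is unique, it then suffices to identify the images of the two endpoints. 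This reduces the whole theorem to endpoint bookkeeping, and is the general-basepoint analogue of Proposition \ref{prop:perm1}, where the fixed endpoint was the origin (which is itself fixed by every $\Sigma(\cdot,\sigma)$).

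First I would record that $\Sigma(\cdot,\sigma)$ preserves $\mathcal{U}_{N}$: relabeling the leaves of an equidistant tree gives an equidistant tree, so the image of a segment of ultrametrics is again a segment of ultrametrics; equivalently, $\Sigma(\cdot,\sigma)$ merely permutes the triples appearing in the three-point condition of Definition \ref{def:3pt}. Next I would invoke Lemma \ref{lemma:perm}: from $T=_{\sigma}T'$ it yields $\Sigma(w^{T'},\sigma)=w^{T}$, and from $T_{0}=_{\sigma}T'_{0}$ it yields $\Sigma(w^{T'_{0}},\sigma)=w^{T_{0}}$. Substituting these endpoint identifications into the displayed segment identity gives
$$
\Sigma\big(\Gamma(w^{T'},w^{T'_{0}}),\,\sigma\big)=\Gamma\big(\Sigma(w^{T'},\sigma),\,\Sigma(w^{T'_{0}},\sigma)\big)=\Gamma\big(w^{T},w^{T_{0}}\big),
$$
and the companion identity follows by applying $\Sigma(\cdot,\sigma^{-1})$, using the composition rule $\Sigma(\Sigma(\cdot,\sigma),\sigma^{-1})=\Sigma(\cdot,\mathrm{id})=\mathrm{id}$, which shows that $\Sigma(\cdot,\sigma^{-1})$ inverts $\Sigma(\cdot,\sigma)$.

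The step I expect to be most delicate is the orientation bookkeeping between $\sigma$ and $\sigma^{-1}$ in the endpoint identification, since Lemma \ref{lemma:perm} pairs $T=_{\sigma}T'$ with the two relations $\Sigma(w^{T},\sigma^{-1})=w^{T'}$ and $\Sigma(w^{T'},\sigma)=w^{T}$, and one must match the direction in which the coordinate permutation acts on the segment to the direction supplied by the lemma, exactly as is done implicitly for the origin endpoint in Proposition \ref{prop:perm1}. Beyond that, everything is a routine substitution: the tropical-linearity identity transports the segment, Lemma \ref{lemma:perm} pins down the endpoints, and uniqueness of the tropical line segment upgrades the endpoint equalities to the claimed set equalities, so no further genuine obstacle should arise.
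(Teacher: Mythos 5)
Your proof is correct, and it takes a genuinely different route from the paper's. The paper proves Theorem \ref{thm:perm2} by reduction to Proposition \ref{prop:perm1}: it observes that the coordinate differences $w^{T}-w^{T_0}$ and $w^{T'}-w^{T'_0}$ agree after sorting, and then relies on the explicit bending-point construction of the tropical segment from Proposition 5.2.5 of \cite{maclagan2015introduction}, so that the bending points of the two segments correspond under the relabeling. You instead show that $\Sigma(\cdot,\sigma)$ is a tropical-linear automorphism of $\mathbb{R}^{n}/\mathbb{R}\one$ (it commutes with $\odot$ by scalars and with coordinatewise $\boxplus$), so it carries $\Gamma(x,y)$ onto $\Gamma(\Sigma(x,\sigma),\Sigma(y,\sigma))$ as a set, and then Lemma \ref{lemma:perm} pins down the endpoints. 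What your approach buys: it avoids both the sorted-differences observation and the appeal to the Maclagan--Sturmfels algorithm, works verbatim for arbitrary basepoints (the paper's argument is an adaptation of the origin-based case), and makes the equivariance structure explicit. What the paper's approach buys: the bending-point computation is constructive and matches Algorithm \ref{algorithm:troplineseg}. One point deserves emphasis: the ``orientation bookkeeping'' you flagged is a real issue, and your resolution is the internally consistent one. Under Lemma \ref{lemma:perm}'s convention ($T=_{\sigma}T'$ gives $\Sigma(w^{T'},\sigma)=w^{T}$), your argument yields $\Sigma(\Gamma(w^{T'},w^{T'_0}),\sigma)=\Gamma(w^{T},w^{T_0})$ and $\Sigma(\Gamma(w^{T},w^{T_0}),\sigma^{-1})=\Gamma(w^{T'},w^{T'_0})$, which is the theorem's display with $\sigma$ and $\sigma^{-1}$ interchanged; the discrepancy lies in the paper's statement, which is inconsistent with Lemma \ref{lemma:perm} in exactly this way (as is Proposition \ref{prop:perm1}), and it is invisible in the paper's examples only because there $\sigma=\sigma^{-1}$.
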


\begin{proof}
Since $T_0 \sim_{\sigma} T'_0$, we have that the differences $w^{T} - w^{T_0}$ and $w^{T'} - w^{T'_0}$ are equal after ordering the coordinates of $w^{T} - w^{T_0}$ and $w^{T'} - w^{T'_0}$ from the smallest and largest.  The remainder of the proof follows the proof of Proposition \ref{prop:perm1}.
\end{proof}

\begin{remark}
Haar measures are a natural generalization of Lebesgue measures on spaces with a specified group structure; they are relevant to fundamental studies in probability theory.  By our specification in Definition \ref{def:sym} and Theorem \ref{thm:perm2}, and the existence of probability measures on the tropical geometric interpretation of tree space \citep{monod2018tropical}, there exist Haar measures: the group structure is given by the symmetric group $\mathrm{Sym}(N)$ and the invariance is on tropical line segments between ultrametrics.
\end{remark}

\begin{corollary}
For $T_0$ invariant under $\sigma$, let $T$ and $T'$ be equidistant trees with $N$ leaves such that $T \sim_{\sigma} T'$.  Then
$$
\Sigma(\Gamma(w^{T}, w^{T_0}), \sigma) = \Gamma (w^{T'}, w^{T_0})
$$
and
$$
\Sigma(\Gamma(w^{T'}, w^{T_0}), \sigma^{-1}) = \Gamma (w^{T}, w^{T_0}).
$$
\end{corollary}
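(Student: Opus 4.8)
The plan is to reduce the claimed identity of \emph{sets} (tropical line segments) to an identity of their \emph{endpoints}, exploiting the fact that the coordinate relabeling $\Sigma(\cdot,\sigma)$ is tropical-linear. First I would record the elementary equivariance: for a fixed $\sigma\in\Sym(N)$ the operator $\Sigma(\cdot,\sigma)$ merely permutes the $n$ coordinates indexed by $p_N$, so it commutes with the coordinatewise maximum $\boxplus$ and with the scalar shift $\odot$. Hence, for all $a,b\in\R$ and all $x,y\in\R^n/\R\one$, $\Sigma(a\odot x\boxplus b\odot y,\sigma)=a\odot\Sigma(x,\sigma)\boxplus b\odot\Sigma(y,\sigma)$, and therefore $\Sigma(\Gamma(w^{T},w^{T_0}),\sigma)=\Gamma(\Sigma(w^{T},\sigma),\Sigma(w^{T_0},\sigma))$. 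Since the tropical line segment between two points of $\R^n/\R\one$ is unique \citep{monod2018tropical}, and its two endpoints are recoverable as the extreme points of the resulting polygonal path, proving the first displayed identity reduces to checking that the endpoint pair $\{\Sigma(w^{T},\sigma),\Sigma(w^{T_0},\sigma)\}$ coincides with $\{w^{T'},w^{T_0}\}$ in the projective torus.

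The moving endpoint is immediate from the hypothesis $T=_{\sigma}T'$: by Lemma \ref{lemma:perm} the relabeling relating the two trees carries $w^{T}$ to $w^{T'}$, with the customary care over $\sigma$ versus $\sigma^{-1}$ (which only fixes the direction in which $\Sigma$ is applied and accounts for the two displayed formulas of the Corollary). Thus the $T$-side endpoint is handled for free, and the entire content of the statement is concentrated in the \emph{fixed} endpoint: I must show $\Sigma(w^{T_0},\sigma)=w^{T_0}$ in $\R^n/\R\one$, so that the single base point is left invariant by the relabeling. This is the step I expect to be the main obstacle, precisely because $w^{T_0}$ appears \emph{unpermuted} on both sides of the asserted identity, whereas $\Sigma(\cdot,\sigma)$ acts on the whole segment, base point included; unlike Theorem \ref{thm:perm2}, where the two endpoints $w^{T_0}$ and $w^{T'_0}$ are themselves $\sigma$-related, here no such freedom is available on the base side.

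To secure the fixed endpoint I would follow the sorted-difference method already used for Proposition \ref{prop:perm1} and Theorem \ref{thm:perm2}. By the construction in Proposition 5.2.5 of \cite{maclagan2015introduction}, the polygonal shape of $\Gamma(w^{T_0},w^{T})$ is governed by the multiset of coordinatewise differences $w^{T}-w^{T_0}$, and applying $\Sigma(\cdot,\sigma)$ to the segment permutes these differences by $\sigma$. The decisive point is therefore whether the sorted differences of $w^{T}-w^{T_0}$ agree with those of $w^{T'}-w^{T_0}$; since $w^{T'}$ is the $\sigma$-relabeling of $w^{T}$ while $w^{T_0}$ is held fixed, this agreement is exactly what throws the burden onto the action of $\sigma$ on the base tree $T_0$. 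I would make the reduction explicit, showing that the two segments coincide if and only if $\Sigma(w^{T_0},\sigma)=w^{T_0}$, and then establish this invariance from the interaction of $\sigma$ with the clade structure of $T_0$ that is forced by using one common base tree for both segments. Once the fixed endpoint is controlled, the first identity follows by endpoint matching together with uniqueness, and the second identity follows verbatim by interchanging $T$ with $T'$ and $\sigma$ with $\sigma^{-1}$, just as the second halves of Proposition \ref{prop:perm1} and Theorem \ref{thm:perm2} are obtained from their first halves.
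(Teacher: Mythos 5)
Your reduction is correct and is in fact the sharpest way to read the statement: since $\Sigma(\cdot,\sigma)$ merely permutes the coordinates indexed by $p_N$, it commutes with coordinatewise $\boxplus$ and with $\odot$, so $\Sigma(\Gamma(w^{T},w^{T_0}),\sigma)=\Gamma\big(\Sigma(w^{T},\sigma),\Sigma(w^{T_0},\sigma)\big)$; and because a tropical segment determines its unordered endpoint pair (they are the extreme points of the concatenated ordinary segments, and equality in $\R^n/\R\one$ of a vector with a coordinate permutation of itself forces genuine equality, the coordinate sum being permutation-invariant), the claimed identity is equivalent to matching endpoint pairs, with Lemma \ref{lemma:perm} disposing of the moving endpoint. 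The genuine gap is your final step. You promise to prove the fixed-endpoint invariance $\Sigma(w^{T_0},\sigma)=w^{T_0}$ ``from the interaction of $\sigma$ with the clade structure of $T_0$ that is forced by using one common base tree,'' but no such interaction exists: the hypotheses tie $\sigma$ only to the pair $(T,T')$ and impose no constraint whatsoever linking $\sigma$ to $T_0$, and the invariance is simply false for a generic base tree. Concretely, take $\sigma=\sigma^{-1}=(4,3,2,1)$ and $T,T'$ to be $T_1,T_2$ of Example \ref{ex:perm2}, and let $T_0$ have the topology of $T_1$ with ultrametric $w^{T_0}=(0.2,\,0.6,\,2,\,0.6,\,2,\,2)$; then $\Sigma(w^{T_0},\sigma)=(2,\,2,\,2,\,0.6,\,0.6,\,0.2)\neq w^{T_0}$ in $\R^n/\R\one$, so the two sides of the first display have different endpoint pairs and cannot coincide. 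The step you flagged as ``the main obstacle'' is thus not merely hard: it is unprovable from the stated assumptions, and your own endpoint-matching criterion shows the corollary as printed is missing a hypothesis.

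For comparison, the paper offers no separate proof at all: the corollary is meant to be Theorem \ref{thm:perm2} specialized to $T'_0=T_0$, and that specialization tacitly imports the extra hypothesis $T_0=_{\sigma}T_0$, i.e., that $\sigma$ lies in the stabilizer of the labeled tree $T_0$ --- which holds for every $\sigma$ when $T_0$ is the origin (star tree), recovering Proposition \ref{prop:perm1}. Under that added hypothesis your argument closes immediately, and indeed your equivariance-plus-uniqueness route is cleaner and more structural than the sorted-difference sketch the paper uses for Theorem \ref{thm:perm2}. So the correct conclusion of your (otherwise sound) analysis is: the identity holds precisely when $\Sigma(w^{T_0},\sigma)=w^{T_0}$ (apart from degenerate crossed coincidences of endpoints), this must be assumed --- either explicitly or by deriving the corollary as the $T'_0=T_0$ case of Theorem \ref{thm:perm2} --- and it cannot be manufactured from the clade structure of an arbitrary fixed $T_0$, as your last paragraph proposes.
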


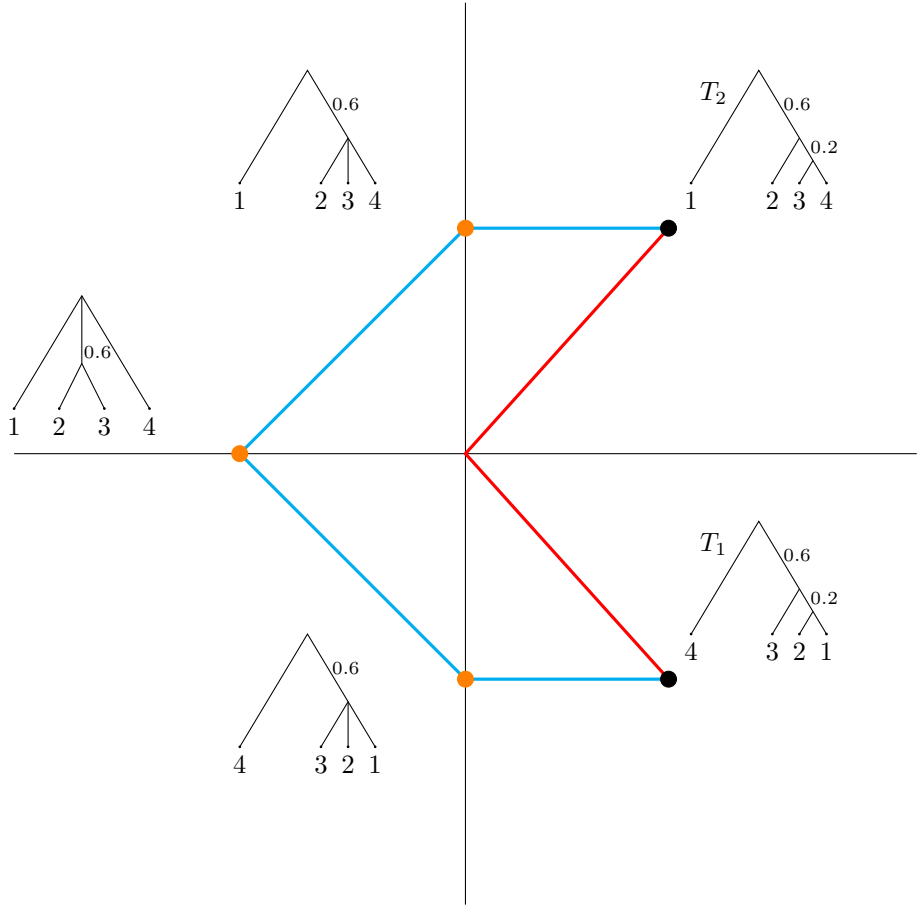
\begin{figure}[h]
  \begin{center}
     \begin{tikzpicture}[scale=0.3]
     	\draw (-20,0) -- (20,0);
     	\draw (0,-20) -- (0,20);
     	
     	\draw [red,very thick] (9,10) -- (0,0) -- (9,-10);
     	\draw [cyan,very thick] (9,10) -- (0,10) -- (-10,0) -- (0,-10) -- (9,-10); 
     	
     	\filldraw [black] (9,10) circle (10pt);
     	\filldraw [black] (9,-10) circle (10pt);
     	\filldraw [orange] (0,10) circle (10pt);
     	\filldraw [orange] (-10,0) circle (10pt);
     	\filldraw [orange] (0,-10) circle (10pt);
     	
     	\draw (-20,2) -- (-17,7) -- (-14,2);
     	\draw (-17,7) -- (-17,4) -- (-18,2);
     	\draw (-17,4) -- (-16,2);
     	\filldraw [black] (-20,2) circle (1pt);
     	\filldraw [black] (-18,2) circle (1pt);
     	\filldraw [black] (-16,2) circle (1pt);
     	\filldraw [black] (-14,2) circle (1pt);
     	\node [below] at (-20,2) {$1$};
     	\node [below] at (-18,2) {$2$};
     	\node [below] at (-16,2) {$3$};
     	\node [below] at (-14,2) {$4$};
     	\node at (-16.3,4.5) {\scriptsize $0.6$};
     	
     	\draw (10,-8) -- (13,-3) -- (16,-8);
     	\draw (14.8,-6) -- (13.6,-8);
     	\draw (15.4,-7) -- (14.8,-8);
     	\filldraw [black] (10,-8) circle (1pt);
     	\filldraw [black] (13.6,-8) circle (1pt);
     	\filldraw [black] (14.8,-8) circle (1pt);
     	\filldraw [black] (16,-8) circle (1pt);
     	\node [below] at (10,-8) {$4$};
     	\node [below] at (13.6,-8) {$3$};
     	\node [below] at (14.8,-8) {$2$};
     	\node [below] at (16,-8) {$1$};
     	\node at (14.7,-4.5) {\scriptsize $0.6$};
     	\node at (15.9,-6.4) {\scriptsize $0.2$};
     	\node at (11,-4) {$T_{1}$};
     	
     	\draw (10,12) -- (13,17) -- (16,12);
     	\draw (14.8,14) -- (13.6,12);
     	\draw (15.4,13) -- (14.8,12);
     	\filldraw [black] (10,12) circle (1pt);
     	\filldraw [black] (13.6,12) circle (1pt);
     	\filldraw [black] (14.8,12) circle (1pt);
     	\filldraw [black] (16,12) circle (1pt);
     	\node [below] at (10,12) {$1$};
     	\node [below] at (13.6,12) {$2$};
     	\node [below] at (14.8,12) {$3$};
     	\node [below] at (16,12) {$4$};
     	\node at (14.7,15.5) {\scriptsize $0.6$};
     	\node at (15.9,13.6) {\scriptsize $0.2$};
     	\node at (11,16) {$T_{2}$};
     	
     	\draw (-10,12) -- (-7,17) -- (-4,12);
     	\draw (-5.2,14) -- (-6.4,12);
     	\draw (-5.2,14) -- (-5.2,12);
     	\filldraw [black] (-10,12) circle (1pt);
     	\filldraw [black] (-6.4,12) circle (1pt);
     	\filldraw [black] (-5.2,12) circle (1pt);
     	\filldraw [black] (-4,12) circle (1pt);
     	\node [below] at (-10,12) {$1$};
     	\node [below] at (-6.4,12) {$2$};
     	\node [below] at (-5.2,12) {$3$};
     	\node [below] at (-4,12) {$4$};
     	\node at (-5.3,15.5) {\scriptsize $0.6$};
     	
     	\draw (-10,-13) -- (-7,-8) -- (-4,-13);
     	\draw (-5.2,-11) -- (-6.4,-13);
     	\draw (-5.2,-11) -- (-5.2,-13);
     	\filldraw [black] (-10,-13) circle (1pt);
     	\filldraw [black] (-6.4,-13) circle (1pt);
     	\filldraw [black] (-5.2,-13) circle (1pt);
     	\filldraw [black] (-4,-13) circle (1pt);
     	\node [below] at (-10,-13) {$4$};
     	\node [below] at (-6.4,-13) {$3$};
     	\node [below] at (-5.2,-13) {$2$};
     	\node [below] at (-4,-13) {$1$};
     	\node at (-5.3,-9.5) {\scriptsize $0.6$};
     	
     \end{tikzpicture}
  \end{center}
  \caption{Equidistant trees $T_1$ and $T_2$, with tropical line segment (blue line) and the BHV geodesic (red line) for Examples \ref{ex:perm2} and \ref{ex:algorithm}.}\label{fig:permutation2} 
  \end{figure}

\begin{example}\label{ex:perm2}
Let $T_1, T_2$ be equidistant trees in Figure \ref{fig:permutation2}.  Here, $\sigma=\sigma^{-1}=(4,3,2,1)$.  Notice that $\Sigma(\Gamma(w^{1}, w^{0}), \sigma) = \Gamma (w^{2}, w^{0})$ and $\Sigma(\Gamma(w^{2}, w^{0}), \sigma^{-1}) = \Gamma(w^{1}, w^{0})$.

In this figure, the black points represent the trees $T_1$ and $T_2$.  These trees have the same tree topology and the same branch lengths, but leaves are labeled differently.  In BHV space, $T_1$ and $T_2$ are distinct trees, and thus belong to different orthants in this example, since there are only two internal edges in these trees: recall that in BHV space, within each orthant, trees are stored by their internal edge lengths, which represent coordinates.  The BHV distance between $T_1$ and $T_2$ is then the unique geodesic that is the cone path, traversing the origin, illustrated by the red line.  (See Figure \ref{fig:petersen} for the configuration of $\mathrm{BHV}_5$ and to see that $T_1$ and $T_2$ are not in neighboring orthants.)

We have $T_1 \sim_{\sigma} T_2$, and the tropical line segment is illustrated by the blue line.  See the details in Example \ref{ex:algorithm} for an explicit computation of the tropical line segment between these trees.  The orange points represent the three remaining trees in the figure: notice that in these trees, there is only one internal edge length, and thus the orange points lie on the $1$-dimensional strata partitioning the quadrants.  The line segments connecting the points traverse the quadrants, and at every point along the blue lines, there are two internal edge lengths until the subsequent orange point is reached, where one internal edge contracts completely into one internal node.
\end{example}


Notice in this example that the bending points of the tropical line segment (the orange points) exhibit interesting tree topologies, including one (2, 2, 2, 0.8, 2, 2) that exhibits sister taxa.  Recall from Section \ref{description:BHV} above that orthant boundaries in BHV space that boundaries correspond to trees with a collapsed internal edge between two orthants with similar tree topologies; this tree then lies on a BHV orthant boundary.  Depending on conventions adopted, however, it may not always be a tree topology; if tree topologies are required to be full dimensional, then this tree is not a valid tree topology in BHV space.  

\subsection{Computing Tropical Line Segments}
\label{subsec:tls}

We now focus our study on the tropical line segment and give an algorithm for its computation.  We begin with a remark from Example \ref{ex:perm2}, where we see that the tropical line segment is not a cone path; it does not traverse the origin (or star tree), whereas BHV geodesics are often cone paths when orthants are far apart.  This observation leads to the following proposition, which uncovers a more subtle behavior of tropical line segments compared to BHV geodesics.

\begin{proposition}
\label{prop:noconepath}
For two points $u$ and $v$ in general position in the tropical projective torus $\R^{n}/\R\one$, the tropical line segment between $u$ and $v$ does not contain the origin.
\end{proposition}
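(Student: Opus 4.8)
The plan is to argue by contradiction, working directly from the coordinatewise meaning of the max-plus operations. Suppose the origin lay on the tropical line segment between $u=(u_p)_{p\in p_N}$ and $v=(v_p)_{p\in p_N}$. By the definition of the segment there would be $a,b\in\R$ with $a\odot u \boxplus b\odot v$ equal, in $\R^n/\R\one$, to the origin; since the origin is the class of every constant vector, this says $\max(a+u_p,\,b+v_p)=c$ for all $p\in p_N$ and some $c\in\R$. Replacing $a$ by $a-c$ and $b$ by $b-c$ only shifts the representative inside its $\R\one$-class, so I would normalize $c=0$ and work with the system $\max(a+u_p,\,b+v_p)=0$ for every coordinate $p$.

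The next step is to extract the combinatorial content of this system. For each $p$ both $a+u_p\le 0$ and $b+v_p\le 0$, with equality in at least one of them; consequently $a\le -\max_p u_p$ and $b\le -\max_p v_p$. Let $I=\{p : a+u_p=0\}$ and $J=\{p : b+v_p=0\}$, so that $I\cup J=p_N$. For $p\in I$ we get $u_p=-a$ while $u_q\le -a$ for all $q$, forcing $-a=\max_q u_q$, so that $I$ is contained in the set of coordinates where $u$ attains its maximum; symmetrically $J$ is contained in the set where $v$ is maximal. This yields a clean equivalence: the origin lies on the segment if and only if the set of maximal coordinates of $u$ together with the set of maximal coordinates of $v$ cover all of $p_N$ (the ``if'' direction being witnessed by $a=-\max_p u_p,\ b=-\max_p v_p$). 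The reduction so far is routine, and I expect it to be the easy half.

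The main obstacle is to rule out this covering. Specializing to tree ultrametrics, the three-point condition identifies the maximal coordinates of an ultrametric $w\in\mathcal{U}_N$ as exactly the pairs $\{i,j\}$ whose most recent common ancestor is the root---equivalently the pairs split across two distinct clades lying immediately below the root---so the complementary, non-maximal pairs are precisely those $\{i,j\}$ contained together in a single root-child clade. Hence the covering fails, and the Proposition holds, exactly when $u$ and $v$ share some pair of leaves sitting inside one root-child clade in \emph{both} trees. I would therefore try to show that the nested-set structure of Definition \ref{def:treetopology} (together with Proposition \ref{prop:conedefn}) forces such a common pair to exist. This is the crux, and it is also the delicate point: if the two root partitions are transverse enough, their within-clade pair sets can be disjoint, the maximal-coordinate sets then cover $p_N$, and the segment meets the origin. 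So the step that carries all the weight---and where any additional hypothesis on $u,v$ must be used---is establishing a leaf pair jointly below the root in both trees; the parametric reduction of the first two paragraphs is comparatively mechanical.
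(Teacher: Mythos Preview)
Your reduction in the first two paragraphs is correct and sharp: the origin lies on the tropical segment between $u$ and $v$ if and only if the sets of coordinates where $u$ and $v$ attain their maxima jointly cover $p_N$. You are also right to flag the final step as the load-bearing one---and in fact it cannot be carried, because the covering condition \emph{can} be met. For instance with $N=4$, take the ultrametrics
\[
w^{1}=(1,\,2,\,2,\,2,\,2,\,1)\in ut\big(\{\{1,2\},\{3,4\}\}\big),\qquad
w^{2}=(2,\,1,\,2,\,2,\,1,\,2)\in ut\big(\{\{1,3\},\{2,4\}\}\big).
\]
Then $w^{1}\boxplus w^{2}=(2,2,2,2,2,2)$, the star tree, so the tropical line segment between $w^{1}$ and $w^{2}$ literally contains the origin. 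Thus the hoped-for common ``below-the-root'' pair need not exist (here the two root bipartitions $\{1,2\}\!\mid\!\{3,4\}$ and $\{1,3\}\!\mid\!\{2,4\}$ are transverse in exactly the way you feared), and your tree-structure argument cannot close the gap. Note also that the Proposition is stated for arbitrary points of $\R^{n}/\R\one$, not only ultrametrics, so invoking the nested-set machinery is in any case a mismatch with the hypothesis.

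The paper's argument is different in spirit: it uses the bending-point description of the tropical segment (the points $\lambda_i\odot u\boxplus v$ with $\lambda=v-u$) and observes that for the origin to lie on one of the Euclidean pieces, two consecutive bending points would have to be in a special configuration with $0$; the phrase ``does not happen because $u$ and $v$ are arbitrary'' signals that this is a genericity claim rather than a universal one. Read that way, the statement is that for \emph{generic} $u,v$ the segment avoids the origin, which is true and which your covering characterization also yields immediately (the covering is a nontrivial closed condition). So your analysis is not wrong---it is more explicit than the paper's---but it shows that the Proposition, taken literally for ``any two points,'' is false; what survives is the generic statement, and that is what the paper's proof actually supports.
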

 
\begin{proof}[Proof due to Carlos Am\'endola]
    Let $u=(u_1,\dots,u_{n})$ and $v=(v_1,\dots,v_{n})$ be two points in general position in $\R^{n}/\R\one$.  The tropical line segment consists of the concatenation of $m$ ordinary line segments, each one having a direction of a zero-one vector \cite[Proposition 5.2.5]{maclagan2015introduction}. The bending points (i.e., the points at which the segments are concatenated) can be computed explicitly via the entries of the difference vector $\lambda = v - u$. Up to reordering, they are given by $$\lambda_i \odot u \boxplus v $$ for $i=1,\dots,n$ (which include the endpoints $u$ and $v$). 
    From these expressions we see that in order for $0$ to be contained in one of the ordinary line segments that comprise the tropical segment from $u$ to $v$, the bending points would need to be in a particular arrangement (one must be a scalar multiple of the other), which does not happen because $u$ and $v$ are in general position.
\end{proof}

Notice that Proposition \ref{prop:noconepath} also follows from the fact that there is a unique tropical line segment passing through two points in general position: We may assume without loss of generality that one of the points is 0 and set the other to be $u$, let $L$ be the unique tropical line segment passing through 0 and $u$.  Then for any $v \not\in L$, there is no tropical line segment containing $u,$ $v,$ and 0.  Hence, the tropical line segment between $u$ and $v$ cannot contain 0.

We now give an algorithm for computing tropical line segments between equidistant trees with $N$ leaves.  The algorithm takes as input two ultrametrics: $w^1 = \big(w^1_{\{1, 2\}}, \ldots, w^1_{\{N-1, N\}} \big)$ associated with an equidistant tree $T_1$ with $N$ leaves; and $w^2 = \big(w^2_{\{1, 2\}}, \ldots, w^2_{\{N-1, N\}} \big)$ associated with an equidistant tree $T_2$ with $N$ leaves.  It returns the tropical line segment between $T_1$ and $T_2$ in $\mathcal{U}_N$.

\begin{algorithm}[h]
\caption{Algorithm for Computing Tropical Line Segments Between Ultrametrics}\label{algorithm:troplineseg}
\begin{algorithmic}[1]
\Procedure{TropLine}{$w^1,w^2$}\Comment{Tropical line segment between ultrametrics $w^1$ and $w^2$}
\State $w^1\gets \big(w^1_{\{1, 2\}},\, \ldots,\, w^1_{\{N-1, N\}} \big)$
\State $w^2 \gets \big(w^2_{\{1, 2\}},\, \ldots,\, w^2_{\{N-1, N\}} \big)$
\State $\lambda' \gets \big( w^2_{\{1, 2\}} - w^1_{\{1, 2\}}, \, \ldots, \, w^2_{\{N-1, N\}} - w^1_{\{N-1, N\}} \big)$\Comment{Compute the difference between $T_2$ and $T_1$}
\State $\lambda \gets \big( \lambda'_{(1)}, \, \ldots, \, \lambda'_{(n)} \big)$\Comment{Reorder the elements of $\lambda'$ from smallest to largest}
\State $L = \emptyset$
\For{$i = 1, \ldots, n$}
\State $w_{\{i, i+1\}} \gets \lambda_{i} \odot w^1_{\{i, i+1\}}$
\State $y_{\{i, i+1\}} \gets w_{\{i,i+1\}} \boxplus w^2_{\{i, i+1\}}$
\EndFor
\State $y \gets \big( y_{\{1,2\}},\, \ldots,\, y_{\{N-1,N\}} \big)$
\If{$\exists\ y_i > 2$}
\State $\mathrm{increment} \gets \max(y_i) - 2$
\For{$i = 1, \ldots, n$}
\State $y_i \gets y_i - \mathrm{increment}$\Comment{Rescale branch lengths to obtain a unitary equidistant tree}
\State $y \gets \big( y_{1},\, \ldots,\, y_{n} \big)$
\EndFor
\Else 
\State $y \gets \big( y_{\{1,2\}},\, \ldots,\, y_{\{N-1,N\}} \big)$
\EndIf
\State $L \gets L \cup \{y\}$
\State \textbf{return} $L$\Comment{Tropical line segments connecting the ultrametrics in $L$}
\EndProcedure
\end{algorithmic}
\end{algorithm}

We remark here that there is a similarity in approach between our algorithm and various results by \cite{bernstein2020infinity} that combinatorially computes ultrametrics.  Theorem 3.6 in \cite{bernstein2020infinity} gives a combinatorial description of a finite set of ultrametrics with a tropical convex hull that itself is a set of of ultrametrics; it begins with an ultrametric that is coordinate-wise larger than a vertex of a particular tropical polytope and also ``slides'' internal nodes down until another candidate vertex is attained, which, however, may not necessarily be a tropical vertex \citep{yu2020extreme}.  In our algorithm, various tree topologies are obtained at intermediate steps also by ``sliding'' internal nodes.


The following example illustrates the implementation of Algorithm \ref{algorithm:troplineseg}.

\begin{example}\label{ex:algorithm}
We revisit the equidistant trees $T_2$ and $T_1$ from Figure \ref{fig:permutation2}, and compute the tropical line segment (blue line) in the figure using Algorithm \ref{algorithm:troplineseg}, from $T_2$ to $T_1$.  Their ultrametrics are
\begin{align*}
w^{2} & = (2,\, 2,\, 2,\, 0.8,\, 0.8,\, 0.4),\\
w^{1} & = (0.4,\, 0.8,\, 2,\, 0.8,\, 2,\, 2).\\
\end{align*}
First, we calculate the difference between $w^2$ and $w^1$, $\lambda' = (1.6,\, 1.2,\, 0,\, 0,\, -1.2,\, -1.6)$, and reorder the elements of $\lambda'$ from smallest to largest: $\lambda = (-1.6,\, -1.2,\, 0,\, 1.2,\, 1.6).$

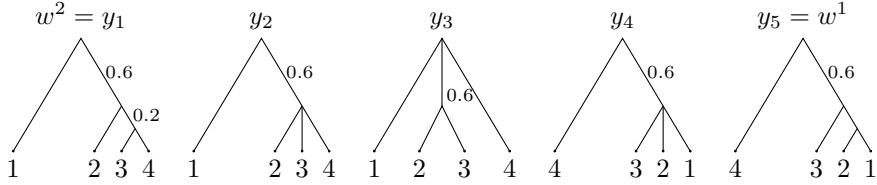
\begin{figure}[h]
\begin{center}
\begin{tikzpicture}[scale=0.3]
     	\draw (10,12) -- (13,17) -- (16,12);
     	\draw (14.8,14) -- (13.6,12);
     	\draw (15.4,13) -- (14.8,12);
     	\filldraw [black] (10,12) circle (1pt);
     	\filldraw [black] (13.6,12) circle (1pt);
     	\filldraw [black] (14.8,12) circle (1pt);
     	\filldraw [black] (16,12) circle (1pt);
     	\node [below] at (10,12) {$1$};
     	\node [below] at (13.6,12) {$2$};
     	\node [below] at (14.8,12) {$3$};
     	\node [below] at (16,12) {$4$};
     	\node at (14.7,15.5) {\scriptsize $0.6$};
     	\node at (15.9,13.6) {\scriptsize $0.2$};
	\node [above] at (13,17) {$w^2 = y_{1}$};

     	\draw (18,12) -- (21,17) -- (24,12);
     	\draw (22.8,14) -- (21.6,12);
	\draw (22.8,12) -- (22.8, 14);
     	\filldraw [black] (18,12) circle (1pt);
     	\filldraw [black] (24,12) circle (1pt);
     	\filldraw [black] (22.8,12) circle (1pt);
     	\filldraw [black] (21.6,12) circle (1pt);
	\node [below] at (18,12) {$1$};
     	\node [below] at (24,12) {$4$};
     	\node [below] at (22.8,12) {$3$};
     	\node [below] at (21.6,12) {$2$};
	\node at (22.7,15.5) {\scriptsize $0.6$};
	\node [above] at (21,17) {$y_{2}$};

     	\draw (26,12) -- (29,17) -- (32,12);
     	\draw (29,17) -- (29,14);
	\draw (28,12) -- (29,14) -- (30,12);
	\filldraw [black] (26,12) circle (1pt);
     	\filldraw [black] (32,12) circle (1pt);
     	\filldraw [black] (28,12) circle (1pt);
     	\filldraw [black] (30,12) circle (1pt);
	\node [below] at (26,12) {$1$};
     	\node [below] at (32,12) {$4$};
     	\node [below] at (28,12) {$2$};
     	\node [below] at (30,12) {$3$};
	\node at (29.8,14.5) {\scriptsize $0.6$};
	\node [above] at (29,17) {$y_{3}$};
	
     	\draw (34,12) -- (37,17) -- (40,12);
     	\draw (38.8,14) -- (37.6,12);
	\draw (38.8,12) -- (38.8,14);
     	\filldraw [black] (34,12) circle (1pt);
     	\filldraw [black] (40,12) circle (1pt);
     	\filldraw [black] (37.6,12) circle (1pt);
     	\filldraw [black] (38.8,12) circle (1pt);
	\node [below] at (34,12) {$4$};
     	\node [below] at (40,12) {$1$};
     	\node [below] at (37.6,12) {$3$};
     	\node [below] at (38.8,12) {$2$};
	\node at (38.7,15.5) {\scriptsize $0.6$};
	\node [above] at (37,17) {$y_{4}$};

     	\draw (42,12) -- (45,17) -- (48,12);
     	\draw (46.8,14) -- (45.6,12);
     	\draw (47.4,13) -- (46.8,12);
     	\filldraw [black] (42,12) circle (1pt);
     	\filldraw [black] (48,12) circle (1pt);
     	\filldraw [black] (45.6,12) circle (1pt);
     	\filldraw [black] (46.8,12) circle (1pt);
	\node [below] at (42,12) {$4$};
     	\node [below] at (48,12) {$1$};
     	\node [below] at (45.6,12) {$3$};
     	\node [below] at (46.8,12) {$2$};
	\node at (46.7,15.5) {\scriptsize $0.6$};
	\node [above] at (45,17) {$y_{5} = w^1$};
	
\end{tikzpicture}
  \caption{Trees $y_i$ for $i=1,\ldots,5$ given by Algorithm \ref{algorithm:troplineseg} from $T_2$ to $T_1$ given in Figure \ref{fig:permutation2}.}\label{fig:exalgorithm2} 
\end{center}
\end{figure}

\begin{enumerate}[1.]
\item For $\lambda_{1} = -1.6$, we have
\begin{align*}
w_1 := \lambda_{1} \odot w^1&= (-1.6+0.4,\, -1.6+0.8,\, -1.6+2,\, -1.6+0.8,\, -1.6+2,\, -1.6+2)\\
&= (-1.2,\, -0.8,\, 0.4,\, -0.8,\, 0.4,\, 0.4)\\
y_1 := w_1 \boxplus w^2 &= \big( \max(-1.2, 2),\, \max(-0.8, 2),\, \max(0.4, 2),\, \max(-0.8, 0.8),\, \max(0.4, 0.8),\, \max(0.4, 0.4) \big)\\
&= w^2 = (2,\, 2,\, 2,\, 0.8,\, 0.8,\, 0.4) 
\end{align*}

\item For $\lambda_{2} = -1.2$, we have
\begin{align*}
w_2 := \lambda_{2} \odot w^1&= (-1.2+0.4,\, -1.2+0.8,\, -1.2+2,\, -1.2+0.8,\, -1.2+2,\, -1.2+2)\\
&= (-0.8,\, -0.4,\, 0.8,\, -0.4,\, 0.8,\, 0.8)\\
y_2 := w_2 \boxplus w^2 &= \big( \max(-0.8, 2),\, \max(-0.4, 2),\, \max(0.8, 2),\, \max(-0.4, 0.8),\, \max(0.8, 0.8),\, \max(0.8, 0.4) \big)\\
&= (2,\, 2,\, 2,\, 0.8,\, 0.8,\, 0.8)
\end{align*}

\item For $\lambda_{3} = 0$, we have
\begin{align*}
 w_3 := \lambda_{3} \odot w^1 = w^1 &= (0.4,\, 0.8,\, 2,\, 0.8,\, 2,\, 2)\\
y_3 := w_3 \boxplus w^2 &= \big( \max(0.4, 2),\, \max(0.8, 2),\, \max(2, 2),\, \max(0.8, 0.8),\, \max(2, 0.8),\, \max(2, 0.4) \big)\\
&= (2,\, 2,\, 2,\, 0.8,\, 2,\, 2)
\end{align*}

\item For $\lambda_{4} = 1.2$, we have
\begin{align*}
w_4 := \lambda_{4} \odot w^1&= (1.2+0.4,\, 1.2+0.8,\, 1.2+2,\, 1.2+0.8,\, 1.2+2,\, 1.2+2)\\
&= (1.6,\, 2,\, 3.2,\, 2,\, 3.2,\, 3.2)\\
y_4 := w_4 \boxplus w^2 &= \big( \max(1.6, 2),\, \max(2, 2),\, \max(3.2, 2),\, \max(2, 0.8),\, \max(3.2, 0.8),\, \max(3.2, 0.4) \big)\\
&= (2,\, 2,\, 3.2,\, 2,\, 3.2,\, 3.2)\\
& = (0.8,\, 0.8,\, 2,\, 0.8,\, 2,\, 2)
\end{align*}

\item For $\lambda_{5} = 1.6$, we have
\begin{align*}
w_5 := \lambda_{5} \odot w^1&= (1.6+0.4,\, 1.6+0.8,\, 1.6+2,\, 1.6+0.8,\, 1.6+2,\, 1.6+2)\\
&= (2,\, 2.4,\, 3.6,\, 2.4,\, 3.6,\, 3.6)\\
y_5 := w_5 \boxplus w^2 &= \big( \max(2, 2),\, \max(2.4, 2),\, \max(3.6, 2),\, \max(2.4, 0.8),\, \max(3.6, 0.8),\, \max(3.6, 0.4) \big)\\
&= (2,\, 2.4,\, 3.6,\, 2.4,\, 3.6,\, 3.6)\\
&= (0.4,\, 0.8,\, 2,\, 0.8,\, 2,\, 2) = w^1
\end{align*}
\end{enumerate}

The trees $y_i$ that we obtain for each value of $\lambda_i$ are shown in sequence in Figure \ref{fig:exalgorithm2}.  Notice that $w^2 = y_1 =_\sigma y_5 = w^1$, and $y_2 =_\sigma y_4$.

\end{example}

Currently, the fastest available algorithm to compute BHV geodesics runs in quartic time, $O(N^4)$ \cite{Owen:2011:FAC:1916480.1916603}.

\begin{proposition}{\cite[Proposition 5.2.5]{maclagan2015introduction}}
The time complexity to compute the tropical line segment connecting two points in $\mathbb{R}^n/\mathbb{R}\one$ is $O(n\, \log\, n) = O(N^2\, \log\, N)$.
\end{proposition}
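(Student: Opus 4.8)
The plan is to read the running time directly off the structural description of the tropical line segment that was already invoked in the proof of Proposition \ref{prop:noconepath}. By \cite[Proposition 5.2.5]{maclagan2015introduction}, the tropical line segment between $u$ and $v$ is the concatenation of at most $n-1$ ordinary Euclidean segments, and its bending points are, up to reordering, the points $\lambda_i \odot u \boxplus v$ for $i = 1, \dots, n$, where $\lambda = v - u$ is the difference vector. Consequently the combinatorial type of the segment, namely the number of pieces and the order in which the bending points occur, is determined entirely by the relative order of the coordinates of $\lambda$. The whole computation therefore reduces to three steps: forming $\lambda$, sorting its entries, and emitting the bending points in that order.

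First I would compute $\lambda = v - u$ coordinate-wise, which costs $O(n)$ arithmetic operations. Next I would sort the $n$ entries of $\lambda$ from smallest to largest; any comparison-based sort does this in $O(n \log n)$ time, and this is exactly the reordering step $\lambda \gets (\lambda'_{(1)}, \dots, \lambda'_{(n)})$ appearing in Algorithm \ref{algorithm:troplineseg}. Finally I would construct the successive bending points in this sorted order. The key observation is that coordinate $j$ of the bending point $\lambda_{(i)} \odot u \boxplus v$ equals $\lambda_{(i)} + u_j$ precisely when $\lambda_j \le \lambda_{(i)}$ and equals $v_j$ otherwise; hence, as $i$ increases, each coordinate switches from the ``follow $v$'' regime to the ``follow $u$'' regime exactly once, and the switch for coordinate $j$ happens at the step where $\lambda_j$ enters the sorted prefix.

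This monotone, one-way switching is what keeps the emission step from costing $O(n^2)$. Working in the projective torus $\R^n/\R\one$, the coordinates that have already switched all share a common additive offset $\lambda_{(i)}$, so they can be tracked by a single scalar together with the fixed values $u_j$, while the unswitched coordinates retain their values $v_j$. Passing from one bending point to the next then moves exactly one coordinate across the threshold and updates the shared offset, an $O(1)$ operation, for a total of $O(n)$ across all bending points. Adding the three contributions gives $O(n) + O(n \log n) + O(n) = O(n \log n)$, so the sorting step dominates.

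I expect the only real obstacle to be exactly this last bookkeeping: a naive implementation that recomputes every coordinate of every bending point independently would spend $O(n)$ per point and hence $O(n^2)$ overall, asymptotically worse than the claimed bound. The amortized argument above, which exploits that each coordinate changes regime once and that a common coordinate shift is free in $\R^n/\R\one$, is what certifies that the geometric data are produced within the sorting budget. To finish, I would translate into the leaf parameter: since $n = \binom{N}{2} = \tfrac{1}{2}N(N-1)$, we have $\log n = \Theta(\log N)$, whence $O(n \log n) = O(N^2 \log N)$, as stated.
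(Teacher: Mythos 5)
Your proof is correct and follows essentially the same route as the paper, which gives no independent argument and simply cites Proposition 5.2.5 of Maclagan--Sturmfels: the bending points of the segment are read off from the sorted coordinates of $\lambda = v - u$, so the $O(n \log n)$ sort dominates, and $n = \binom{N}{2}$ gives $O(N^2 \log N)$. Your amortized bookkeeping for emitting the bending points is a worthwhile extra detail the citation glosses over---it is exactly what keeps the bound from degrading to the $O(n^2)$ cost of the naive per-point computation, which is in fact what the paper's own Algorithm \ref{algorithm:troplineseg} and Example \ref{ex:algorithm} perform when they write out each $y_i$ as a full vector.
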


Notice that the computational complexity of the tropical line segment corresponds to the dimension of the tropical projective torus in the sense that the number of bending points is linear with respect to the dimension of the tropical projective torus.  Computing the BHV metric, on the other hand, depends on the angles formed between paths that pass through points lying in different orthants; these paths connect to form the geodesic in BHV space.  In this sense, computing the BHV metric also computes line segments but the computation is based on compatibility of two tree topologies while the computation of the tropical line segment does not depend on the tree topology and only computes line segments based on the coordinates of the ultrametrics.



\section{Discussion}
\label{sec:end}

In this work, we considered the space of phylogenetic trees in the context of tropical geometry as an alternative framework to BHV space.  The construction of BHV space is based on tree topologies, where a Euclidean orthant is assigned to each tree topology.  In our paper, we study tree topologies and their occurrence in the tropical geometric phylogenetic tree space.  In particular, we use the tropical line segment as a framework for our study.  For any two given trees, we compute the tropical line segment between them and prove a combinatorial theorem that describes all tree topologies that occur on this tropical line segment between the two trees.  We also provide a notion of invariance on a tropical line segment, based on a permutation of leaf labels on trees; this construction has implications in tropical geometric applications to probability theory, since it provides the existence of Haar measures on phylogenetic tree space.  We also give an algorithm to compute the tropical line segment between any two trees, which has a lower computational complexity than the current state-of-the-art for computing geodesics in BHV space.  We also show that the tropical line segment does not pass through the origin, whereas in BHV space, for orthants that are far apart from one another, the geodesic is often a cone path.  This implies a more subtle and intricate geometry than that of BHV space, which is an interesting avenue for further study.

Our work lays foundations for future studies in both theoretical and applied directions.  The behavior of the tropical line segment inspires further questions concerning the geometry of tropical geometric phylogenetic tree space.  One example is the study of the curvature of the space; given that general geodesics are not unique in palm tree space (i.e., the tropical geometric tree space endowed with the tropical metric), it differs from the $\mathrm{CAT}(0)$ geometry of BHV space.  The variation of tree topologies in tropical geometric phylogenetic tree space may also be used for statistical studies.  For example, when different tree topologies arise via a random data generating processes, an interesting question is to ask whether the the difference in topologies is due to a difference in distributions.  Palm tree space is a well-defined probability space \citep{monod2018tropical}, so this question may be posed in terms of a statistical hypothesis test.  A natural way to measure differences between two objects is via a metric; \cite{10.1093/sysbio/syx046} propose a metric on phylogenetic tree shapes, which may be used to define a test statistic.


\section*{Acknowledgments}
We wish to thank Carlos Am\'{e}ndola for many helpful discussions and his insightful input and for the proof of Proposition \ref{prop:noconepath}.

R.Y.~is supported in part by NSF DMS \#1622369 and \#1916037.  Any opinions, findings, and conclusions or recommendations expressed in this material are those of the author(s) and do not necessarily reflect the views of any of the funders.

\clearpage
\newpage
\bibliographystyle{chicago}  
\bibliography{TreeTop_ref} 


\end{document}